\documentclass{amsart}

\usepackage{amsfonts,amssymb,verbatim,amsmath,amsthm,latexsym,textcomp,amscd}
\usepackage{latexsym,amsfonts,amssymb,epsfig,verbatim}
\usepackage{amsmath,amsthm,amssymb,latexsym,graphics,textcomp}
\usepackage{graphicx}

\usepackage{url}
\usepackage{psfrag}
\usepackage{amsmath,amsthm,amsfonts,amssymb,mathrsfs,bm,graphicx,stmaryrd}
\usepackage{mathtools}
\usepackage[usenames]{color}
\usepackage{hyperref}
\usepackage [latin1]{inputenc}

\usepackage{tikz}
\usetikzlibrary{shapes,arrows}

\begin{document}

\tikzstyle{decision} = [diamond, draw, fill=gray!20, 
    text width=4.5em, text badly centered, node distance=3cm, inner sep=0pt]
\tikzstyle{block} = [rectangle, draw, fill=gray!20, 
    text width=10em, text centered, rounded corners, minimum height=3em]
\tikzstyle{line} = [draw, -latex']
\tikzstyle{cloud} = [draw, ellipse,fill=gray!20, node distance=3cm,
    minimum height=2em]

\newtheorem{theorem}{Theorem}[section]
\newtheorem{prop}[theorem]{Proposition}
\newtheorem{assume}[theorem]{Assumption}
\newtheorem{lemma}[theorem]{Lemma}
\newtheorem{cor}[theorem]{Corollary}
\newtheorem{definition}[theorem]{Definition}
\newtheorem{conj}[theorem]{Conjecture}
\newtheorem{claim}[theorem]{Claim}
\newtheorem{qn}[theorem]{Question}
\newtheorem{defn}[theorem]{Definition}
\newtheorem{defth}[theorem]{Definition-Theorem}
\newtheorem{obs}[theorem]{Observation}
\newtheorem{rmk}[theorem]{Remark}
\newtheorem{ans}[theorem]{Answers}
\newtheorem{slogan}[theorem]{Slogan}
\newtheorem{corollary}[theorem]{Corollary}
\newtheorem{proposition}[theorem]{Proposition}
\newtheorem{observation}[theorem]{Observation}
\newtheorem{property}{Property}[subsection]
\newtheorem{remark}[theorem]{Remark}
\newtheorem{example}[theorem]{Example}

\newtheorem{question}{Question}
\newtheorem{maintheorem}{Theorem}
\newtheorem{maincoro}[maintheorem]{Corollary}
\newtheorem{mainprop}[maintheorem]{Proposition}

\newcommand{\bluecomment}[1]{\textcolor{blue}{#1}}
\newcommand{\boundary}{\partial}
\newcommand{\hhat}{\widehat}
\newcommand{\C}{{\mathbb C}}
\newcommand{\B}{{\mathbb B}}
\newcommand{\Ga}{{\Gamma}}
\newcommand{\G}{{\Gamma}}
\newcommand{\s}{{\Sigma}}
\newcommand{\PSL}{{PSL_2 (\mathbb{C})}}
\newcommand{\pslc}{{PSL_2 (\mathbb{C})}}
\newcommand{\pslr}{{PSL_2 (\mathbb{R})}}
\newcommand{\Gr}{{\mathcal G}}
\newcommand{\integers}{{\mathbb Z}}
\newcommand{\natls}{{\mathbb N}}
\newcommand{\ratls}{{\mathbb Q}}
\newcommand{\reals}{{\mathbb R}}
\newcommand{\proj}{{\mathbb P}}
\newcommand{\lhp}{{\mathbb L}}
\newcommand{\tube}{{\mathbb T}}
\newcommand{\cusp}{{\mathbb P}}
\newcommand\AAA{{\mathcal A}}
\newcommand\HHH{{\mathbb H}}
\newcommand\BB{{\mathcal B}}
\newcommand\CC{{\mathcal C}}
\newcommand\DD{{\mathcal D}}
\newcommand\EE{{\mathcal E}}
\newcommand\FF{{\mathcal F}}
\newcommand\GG{{\mathcal G}}
\newcommand\HH{{\mathcal H}}
\newcommand\II{{\mathcal I}}
\newcommand\JJ{{\mathcal J}}
\newcommand\KK{{\mathcal K}}
\newcommand\LL{{\mathcal L}}
\newcommand\MM{{\mathcal M}}
\newcommand\NN{{\mathcal N}}
\newcommand\OO{{\mathcal O}}
\newcommand\PP{{\mathcal P}}
\newcommand\QQ{{\mathcal Q}}
\newcommand\RR{{\mathcal R}}
\newcommand\SSS{{\mathcal S}}
\newcommand\TT{{\mathcal T}}
\newcommand\UU{{\mathcal U}}
\newcommand\VV{{\mathcal V}}
\newcommand\WW{{\mathcal W}}
\newcommand\XX{{\mathcal X}}
\newcommand\YY{{\mathcal Y}}
\newcommand\ZZ{{\mathcal Z}}
\newcommand{\iid}{{i.i.d.\ }}
	\renewcommand{\ae}{{a.e.\ }}
\newcommand\CH{{\CC\Hyp}}
\newcommand{\Chat}{{\hat {\mathbb C}}}
\newcommand\MF{{\MM\FF}}
\newcommand\PMF{{\PP\kern-2pt\MM\FF}}
\newcommand\ML{{\MM\LL}}
\newcommand\PML{{\PP\kern-2pt\MM\LL}}
\newcommand\GL{{\GG\LL}}
\newcommand\Pol{{\mathcal P}}
\newcommand\half{{\textstyle{\frac12}}}
\newcommand\Half{{\frac12}}
\newcommand\Mod{\operatorname{Mod}}
\newcommand\Area{\operatorname{Area}}
\newcommand\ep{\epsilon}
\newcommand\Hypat{\widehat}
\newcommand\Proj{{\mathbf P}}
\newcommand\U{{\mathbf U}}
 \newcommand\Hyp{{\mathbf H}}
\newcommand\D{{\mathbf D}}
\newcommand\Z{{\mathbb Z}}
\newcommand\R{{\mathbb R}}
\newcommand\Q{{\mathbb Q}}
\newcommand\E{{\mathbb E}}
\newcommand\EXH{{ \EE (X, \HH_X )}}
\newcommand\EYH{{ \EE (Y, \HH_Y )}}
\newcommand\GXH{{ \GG (X, \HH_X )}}
\newcommand\GYH{{ \GG (Y, \HH_Y )}}
\newcommand\ATF{{ \AAA \TT \FF }}
\newcommand\PEX{{\PP\EE  (X, \HH , \GG , \LL )}}
\newcommand{\lct}{\Lambda_{CT}}
\newcommand{\lel}{\Lambda_{EL}}
\newcommand{\lgel}{\Lambda_{GEL}}
\newcommand{\lre}{\Lambda_{\mathbb{R}}}

\newcommand\til{\widetilde}
\newcommand\length{\operatorname{length}}
\newcommand\tr{\operatorname{tr}}
\newcommand\cone{\operatorname{cone}}
\newcommand\gesim{\succ}
\newcommand\lesim{\prec}
\newcommand\simle{\lesim}
\newcommand\simge{\gesim}
\newcommand{\simmult}{\asymp}
\newcommand{\simadd}{\mathrel{\overset{\text{\tiny $+$}}{\sim}}}
\newcommand{\ssm}{\setminus}
\newcommand{\diam}{\operatorname{diam}}
\newcommand{\pair}[1]{\langle #1\rangle}
\newcommand{\T}{{\mathbf T}}
\newcommand{\I}{{\mathbf I}}
\newcommand{\pG}{{\partial G}}
 \newcommand{\oxi}{{[1,\xi)}}
\newcommand{\cg}{\mathcal{G}}

\newcommand{\tw}{\operatorname{tw}}
\newcommand{\base}{\operatorname{base}}
\newcommand{\trans}{\operatorname{trans}}
\newcommand{\rest}{|_}
\newcommand{\bbar}{\overline}
\newcommand{\lbar}{\underline}
\newcommand{\UML}{\operatorname{\UU\MM\LL}}
\newcommand{\EL}{\mathcal{EL}}
\newcommand{\ncox}{{N_C([o,\xi))}}
\newcommand{\qle}{\lesssim}

\def\ind{{\mathbf 1}}
\def\N{\mathbb{N}}
\def\L{\mathbb{L}}
\def\P{\mathbb{P}}
\def\Z{\mathbb{Z}}
\def\R{\mathbb{R}}
\def\S{\mathcal{S}}
\def\X{\mathcal{X}}
\def\E{\mathbb{E}}
\def\l{\ell}
\def\Ups{\Upsilon}
\def\om{\omega}
\def\Om{\Omega}

\newcommand\Gomega{\Omega_\Gamma}
\newcommand\nomega{\omega_\nu}
\newcommand\omegap{{(\Omega,\P)}}
\newcommand\omegapp{{(\Omega',\P)}}

\DeclarePairedDelimiter\floor{\lfloor}{\rfloor}
\newcommand{\cF}{\mathcal{F}}
\newcommand{\cD}{\mathcal{D}}
\newcommand{\cZ}{\mathcal{Z}}
\newcommand{\cf}{\mathcal{F}}
\newcommand{\cA}{\mathcal{A}}
\newcommand{\cB}{\mathcal{B}}
\newcommand{\cC}{\mathcal{C}}
\newcommand{\cT}{\mathcal{T}}
\newcommand{\rb}{\mathfrak{A}}
\newcommand{\Var}{\hbox{Var}}
\newcommand{\ee}{\hbox{e}_1}
\newcommand{\cd}{\mathcal{D}} 
\newcommand{\ce}{\mathcal{E}}
\setcounter{tocdepth}{1}

\title{Random Trees in Hyperbolic FPP}

\author{Riddhipratim Basu}
\address{Riddhipratim Basu, International Centre for Theoretical Sciences, Tata Institute of Fundamental Research, Bengaluru, India}

\email{rbasu@icts.res.in}

\author{Mahan Mj}
\address{Mahan Mj, School
of Mathematics, Tata Institute of Fundamental Research. 1, Homi Bhabha Road, Mumbai-400005, India}

\email{mahan@math.tifr.res.in}

\thanks{RB is partially supported by a SERB MATRICS grant  from DST Govt.\ of India (MTR/2021/000093), and DAE project no.\ RTI4019 via ICTS.  MM is partly supported by  a DST JC Bose Fellowship,  and by  the Department of Atomic Energy, Government of India, under project no.12-R\&D-TFR-5.01-0500.  Both authors were supported in part by an endowment of the Infosys Foundation. This project began at the International Centre for Theoretical Sciences (ICTS) during  the program - Probabilistic Methods in Negative Curvature (Code: ICTS/PMNC2019/03). We  gratefully acknowledge the support of ICTS}
\subjclass[2010]{60K35, 82B43,  20F67 (20F65, 51F99, 60J50) } 

\keywords{first passage percolation, hyperbolic group,  coalescence}

\date{\today}

 \begin{abstract}
We start by surveying known properties of first passage percolation (FPP) geodesics on a Gromov-hyperbolic group $G$. Due to a coalescence phenomenon established in earlier work of the authors, a random tree $T(\xi,\omega)$ consisting of infinite random geodesics to a point $\xi$ on the Gromov boundary $\pG$ emerges naturally. These trees have a rich random geometry that we explore further in this paper. 
\end{abstract}

\maketitle

\tableofcontents

\section{Introduction}\label{sec-intro} First passage percolation (FPP) is a well-studied model of a random metric on a graph, obtained by assigning independent and identically distributed random lengths to each edge. Over the last 60 years, FPP has been extensively studied, primarily on Euclidean background geometry. Of late, there has been interest in studying FPP on different background geometries, in particular on Gromov-hyperbolic graphs \cite{benjamini-tessera,bz-tightness,BM19,BM24}. This partly expository paper  starts by surveying known theorems for FPP on Cayley graphs of Gromov hyperbolic groups, primarily the results established in \cite{BM19, BM24}. An important fact established in  \cite{BM19} is coalescence of random geodesics directed towards a common point  $\xi$ in the Gromov boundary  $\pG$ of $G$: given a hyperbolic group $G$ and
 for every $\xi$ in the Gromov boundary  $\pG$ of $G$, any two geodesic rays (in the random first passage percolation metric) directed towards $\xi$  almost surely coalesce. As observed in \cite{BM24}, for almost every realization $\omega$ of the random environment, the collection of FPP geodesics directed towards 
$\xi$ forms a random tree $T(\xi,\omega)$,  referred to in that paper and below as the \emph{backward random tree} in the direction $\xi$. 

Such geodesic trees are commonly studied for first and last passage percolation models on $\Z^2$. We refer the interested reader to the excellent monograph \cite{ADH} for details on what was known classically, and to \cite{H05,hoffman, dh, DH17,AH16} for recent progress in  planar  Busemann functions. 
It was established by Damron and Hansen in \cite[Theorem 1.10(4)]{dh} that, under mild conditions,  ``backward clusters" are finite. For  finer properties in the case of the exactly solvable exponential last passage percolation, see \cite{BBB23}. 
One of the motivations for studying this in the context of hyperbolic FPP is to study the effect of the ambient geometry on such random trees. We showed in \cite{BM24}, that under mild conditions on FPP on hyperbolic groups and for a fixed $\xi\in \pG$, the backward random tree $T(\xi,\omega)$ is almost surely \emph{infinite in all directions}, i.e., for every $\xi'\ne \xi$, $T(\xi,\omega)$ contains a bi-infinite geodesic $(\xi',\xi)_{\omega}$. After reviewing the relevant results from \cite{BM19,BM24}, the rest of this paper is devoted to further study of the random trees $T(\xi,\omega)$.

%
%
%
%


\subsection*{Organization of the paper}
The rest of the paper is organized as follows. We start off in Section \ref{sec-prelhypperc} by recalling the basic set up of first passage percolation on hyperbolic groups and by describing the results about the first order and fluctuation behavior of the FPP metric from \cite{BM19}. In Section \ref{sec-btree}, we recall the relevant results about direction and coalescence of infinite geodesics in hyperbolic FPP established in \cite{BM19,BM24}. The remaining sections contain some new results about the trees and some related statistics. In Section \ref{sec-coalescedistrbn} we show that the point of coalescence between two random geodesics in the same boundary direction is typically close to the deterministic centroid of the three endpoints. {As a consequence, we show (see Corollary~\ref{cor-decorr-tree}) that the backward sub-trees of  $T(\xi,\omega)$ rooted at far enough points along a horosphere are contained in disjoint cones with large probability}. In  Section \ref{sec-lambda}, we study the growth of the backward trees, and show that the backward subtrees are finite with positive probability, and study a few related questions. 
{In particular, we 
reprove a slightly weaker version of \cite[Theorem A]{bjq25} in Corollary~\ref{cor-nonhyp}.}

\section{Preliminaries on  hyperbolic first passage percolation}\label{sec-prelhypperc} 
Let $(\Ga,d)$ be a Gromov-hyperbolic graph equipped with the graph distance metric $d$. Let $V=V(\Gamma)$ and $E=E(\Gamma)$ denote its vertex and edge sets respectively. Consider the $E$-fold product of the Borel $\sigma$-algebra on $\Omega=[0,\infty)^{E}$, denoted $\mathscr{B}$. 
 Let $\Omega= [0, \infty)^E$ equipped with the Borel $\sigma$-algebra $\mathscr{B}$. Let $\rho$ be a probability measure  on $[0, \infty)$. Let  $\P=\rho^{\otimes E}$ be the product measure. Let $\mathscr{B}^*$ denote the $\P$-completion of $\mathscr{B}$. We shall be working with the probability space $(\Omega, \mathscr{B}^*, \P)$.  We shall denote elements of $\Omega$  by $\omega=\{\omega(e)\}_{e\in E}$.  The \emph{edge length} random variables $X_{e}: \Omega \to [0,\infty)$ given by $X_{e}(\omega)=\omega(e)$ for $e\in E$ will be \iid with law $\rho$ under $\P$. 

\begin{defn}\label{def-fpp}
	Let $\gamma = \{e_1, \cdots e_k\}$ be an edge path. For $\omega \in \Omega$, the $\omega-$length of $\gamma$ is defined to be $$\ell_{\omega}(\gamma):= \sum_{e\in \gamma } \omega(e).$$ Define $$d_{\omega} (x,y) := \inf_\gamma \ell_{\omega}(\gamma),$$ where $\gamma$ ranges over edge paths connecting $x, y \in V$. The random variable $T(x,y)$ defined on $\Omega$ by $$T(x,y)(\omega)=d_\omega (x,y)$$ will be called the {\bf first passage time} between $x$ and $y$.
\end{defn}

We shall assume throughout that $\rho$ is continuous, i.e., it does not have any atoms; in particular it does not put any mass on $0$. Under such hypotheses it is easy to see that paths attaining the first passage time exist and are unique almost surely. 

\begin{defn}
	A path that realizes $d_{\omega}(x,y)$ will be called an {\bf $\omega-$ geodesic}, denoted $[x,y]_{\omega}$. Observe that under our hypothesis on $\rho$, for $\P$-a.e. $\omega\in \Omega$, there is a unique $\omega$-geodesic between each pair of points in $\Ga$. For fixed vertices $x,y\in \Ga$, this ($\P$-a.e. well-defined) random path $\Upsilon(x,y)$ (i.e., $\Upsilon(x,y)(\omega)$ denotes the $\omega$-geodesic between $x$ and $y$) will be called the {\bf FPP-geodesic} between $x$ and $y$. 	
\end{defn}

We shall further assume throughout the rest of the paper that the edge length distribution $\rho$ has super exponentially decaying tails. 

\begin{assume}\label{assume-subexp}
	Suppose that the edge weights $X_e$ are i.i.d.\ non-negative random variables with distribution $\rho$ such that for {all} $a > 0$, $\int_0^\infty e^{ax} d\rho(x) $ is finite. 
\end{assume}

\subsection{Velocity}\label{sec-vel}
For the purposes of this paper, fix a hyperbolic group $G$. Let $\Gamma$ denote a Cayley graph of $G$ with respect to a finite symmetric generating set. Let $1\in \Gamma$ denote the identity element of $G$.
Let $\partial G$ denote the Gromov boundary of $G$. We equip $\pG$ with the Patterson-Sullivan measure $\nu$. This measure $\nu$ may be thought of as a weak limit 
of the uniform measures on balls. In this section, we summarize two of the main theorems of \cite{BM19}:
\begin{enumerate}
\item In almost every boundary direction, first passage time grows linearly with a fixed velocity.
\item Variance of the first passage time grows linearly in the word distance along a word geodesic. 
\end{enumerate}

\begin{defn}\label{def-vel}
Let $\xi \in \pG$. The velocity  in the direction of $\xi$ is defined as
${v} (\xi) := \lim_{n \to \infty} \frac{\E(\T(1, x_n))}{n},$
where $x_n\to \xi$ with $d(1,x_n)=n$ is any sequence \emph{provided such a limit exists.}
\end{defn}

The following theorem says that velocity exists and is constant $\nu-$a.e.

\begin{theorem}\label{thm-vexists}\cite[Theorem 5.1]{BM19}
	For $\nu$-a.e.\ $\xi \in \pG$,  the velocity $v(\xi)$ in the direction of $\xi$ exists. Further,  $v(\xi)$ is constant $\nu$-almost everywhere in $\pG$.
\end{theorem}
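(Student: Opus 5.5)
The plan is to encode velocity along word geodesic rays as a subadditive cocycle over an ergodic dynamical system built from the Patterson--Sullivan measure. Then Kingman's subadditive ergodic theorem gives existence of the limit and constancy at the same time.

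\textbf{Step 1: well-definedness.} First I check that the limit in Definition~\ref{def-vel} does not depend on the choice of sequence $x_n\to\xi$. If $x_n,x_n'\to\xi$ with $d(1,x_n)=d(1,x_n')=n$, then by hyperbolicity $x_n$ and $x_n'$ both lie within bounded distance of a fixed word geodesic ray $[1,\xi)$ for all large $n$. Indeed, since $G$ is hyperbolic with finitely many generators, one can take $d(x_n,x_n')\le C$ for a uniform constant $C=C(\delta)$. Assumption~\ref{assume-subexp} gives $\E T(x,y)\le d(x,y)\,\E X_e$, so $|\E T(1,x_n)-\E T(1,x_n')|\le C\,\E X_e$. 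Hence it suffices to prove that $\lim \E T(1,\gamma(n))/n$ exists along one word geodesic ray $\gamma=\gamma_\xi$ converging to $\xi$, and that the limit is constant for $\nu$-a.e.\ $\xi$.

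\textbf{Step 2: the dynamical system.} I would use the standard symbolic coding of geodesics in hyperbolic groups (Cannon's automatic structure). The word geodesic rays chosen as shortlex-minimal rays from $1$ are exactly the infinite paths in a finite directed graph. The Patterson--Sullivan measure $\nu$ pushes to (or lifts from) a shift-quasi-invariant measure on this path space. By the work of Coornaert and Calegari--Fujiwara, there is a shift-invariant measure $\mu$, equivalent to $\nu$ with bounded Radon--Nikodym derivative on a maximal component, such that the shift is ergodic on each maximal recurrent component. On the path space I define
\[
f_n(\gamma)=\E T(\gamma(0),\gamma(n)).
\]
By group invariance of $\P$, this equals $\E T(1,\gamma(n))$ for rays based at $1$ and depends only on the word $\gamma(0)^{-1}\gamma(n)$. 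The triangle inequality and translation invariance give $f_{n+m}\le f_n+f_m\circ\sigma^n$, so $(f_n)$ is subadditive. Also $0\le f_1\le \E X_e<\infty$, so Kingman's theorem applies: $f_n/n$ converges $\mu$-a.e.\ to a shift-invariant function, and ergodicity makes it a constant $v_0$ on each ergodic component.

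\textbf{Step 3 (main obstacle): from components to $\nu$-a.e.\ constancy.} The main difficulty is passing from the symbolic system back to $\nu$ on $\pG$ and ruling out different constants on different maximal components. The coding map from rays to $\pG$ is finite-to-one and boundedly distorts measure, so $\nu$-a.e.\ $\xi$ is hit by a ray that is generic for some component, which gives existence. For constancy, I would use the fact that $\nu$ is $G$-quasi-invariant and $G$ acts ergodically on $(\pG,\nu)$. The function $\xi\mapsto v(\xi)$ is $G$-invariant: for $g\in G$, the rays $g\gamma$ and $\gamma_{g\xi}$ fellow-travel eventually, and $f_n$ changes by $O(d(1,g))=o(n)$. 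A $G$-invariant measurable function is a.e.\ constant by ergodicity of the $G$-action, which completes the proof. If the components issue persists, this last argument bypasses it entirely, since invariance under $G$ alone forces constancy.
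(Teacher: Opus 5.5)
Your argument is correct in outline, but it reaches existence by a genuinely different route from the paper's.

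\textbf{How the two routes differ.} The paper (following \cite{BM19}) first shows that $\til\nu$-a.e.\ coding path has well-defined frequencies $f_w(\sigma)$ of all finite geodesic words. It then uses the geometric machinery of \cite[Sections 4, 5]{BM19} (Morse lemma, uniformly quasiconvex hyperplanes) to show that expected passage times along the ray are approximately additive over blocks, so that the frequencies determine $\lim_n \E T(1,x_n)/n$. Measurability of $v$ is handled by a Lusin-type argument, and constancy by ergodicity of $G$ on $(\pG,\nu)$.

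You instead observe that $f_n(\gamma)=\E T(1,e(\gamma|_n))$ is already a deterministic subadditive cocycle over the shift, using only the triangle inequality and left-invariance of $\P$. Kingman's theorem for the stationary (Parry) Markov measure $\mu_C$ on each maximal component $C$ then gives the limit $v_C=\inf_n\frac1n\int f_n\,d\mu_C$ at once. The symbolic input is the same as the paper's (Corollary~\ref{cor-eventuallyunique}, Lemma~\ref{lem-ctfsa}, and the Markov structure on maximal components that underlies the frequency theorem), and your last step is identical to the paper's.

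\textbf{What each route buys.} Your route is shorter and avoids all hyperplane estimates. It also makes measurability easy: on a conull set $v$ takes only the finitely many values $v_C$, and its level sets are images of Borel sets under the continuous map $\bar e$, hence analytic and $\nu$-measurable. You should say this explicitly, since your final ergodicity step uses it. The paper's route buys more: it expresses velocity as a function of word frequencies and produces the quasi-additive decomposition of passage times that \cite{BM19} reuses elsewhere.

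\textbf{Making Step 3 precise.} What you need is that the $\til\nu$-law of the tail $\SSS^M\sigma$, after the path enters $C$, is absolutely continuous with respect to $\mu_C$, together with the bound $|f_n(\sigma)-f_{n-M}(\SSS^M\sigma)|\le M\,\E X_e$. Your phrase ``equivalent with bounded Radon--Nikodym derivative'' claims more than you know or need.

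\textbf{A false claim in Step 1.} The condition $x_n\to\xi$ with $d(1,x_n)=n$ does not force $x_n$ to stay within bounded distance of $[1,\xi)$. Convergence only makes the Gromov product $(x_n\cdot\xi)_1$ tend to infinity. For example, in the free group on $a,b$, the points $x_n=a^{\lceil n/2\rceil}b^{\lfloor n/2\rfloor}$ converge to $a^\infty$, yet $d(x_n,[1,a^\infty))=\lfloor n/2\rfloor$. So Step 1 does not establish independence of the limit over arbitrary such sequences.

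What you actually need is independence over points lying on geodesic rays from $1$ to $\xi$; this is also how the paper uses the definition, via the rays $e(\sigma)$ and \cite[Remark 5.3]{BM19}. Two such rays are uniformly close in terms of $\delta$, so your bound $|\E T(1,x_n)-\E T(1,x_n')|\le C\,\E X_e$ goes through. The $G$-invariance step needs the same comparison with an index shift of size $O(d(1,g))$, which is equally harmless.
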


A word about the proof is in order as Theorem~\ref{thm-vexists} is the only place in \cite{BM19} where the group structure of $G$ is used; for the other theorems in \cite{BM19}, $G$ can be replaced by a proper Gromov-hyperbolic graph.

The fact that $G$ is a hyperbolic group allows us to use a theorem of Cannon \cite{cannon-conetype,cannon-book}  that furnishes  an automatic structure. We refer the reader to \cite[Section 3]{BM19} for details. Fix a finite symmetric generating set $S$ of $G$. Then $S$ generates $G$ as a semigroup. The main theorem of \cite{cannon-conetype} furnishes a finite state automaton $\GG$ (i.e.\   a finite digraph with directed edges labeled by $S$, and a distinguished initial state) such that the language accepted by $\GG$ is a regular geodesic language. Moreover, for any $g \in G$, there is a unique geodesic word $w_g$ accepted by $\GG$. Let $M$ denote the transition matrix of $\GG$ and $\lambda > 0$ denote its Perron-Frobenius eigenvalue. There are finitely many components of the directed graph $\GG$ whose Perron-Frobenius eigenvalue equals $\lambda$. These are called the \emph{maximal components} of $\GG$.
It turns out that each directed edge path in $\GG$ eventually lands in at most one maximal component, see \cite[Proposition 4.10]{calegarifujiwara}, \cite[Proposition 3.12]{calegarimaher} or \cite[Proposition 3.12]{BM19}. 
We formalize the above discussion as follows. Let $\PP^0$ denote the collection of directed paths in $\GG$ starting at the initial state 1. These are identified with geodesics in the Cayley graph $\Gamma$ of $G$ with respect to the generating set $S$.
Let $\PP^0_n$ denote the subset of $\PP^0$ consisting of directed paths of length $n$. Let $e:\PP^0 \to G $ denote the evaluation map  sending a labeled path in $\PP^0$ to the corresponding element in $G$.
Further, let  $\PP_\infty(C)$
denote the collection of infinite paths contained in the maximal component $C$. Let $\nu$ denote the Patterson-Sullivan measure on $\pG$ \cite{coornert-pjm}. {We shall  use $\til \nu$ to denote the lift of the Patterson-Sullivan measure to the path space $\PP^0_{\infty}$}. Then (see \cite[Corollary 3.16]{BM19}) we have the following consequence  summarizing the above discussion.
\begin{cor}\label{cor-eventuallyunique}
	Let $\SSS$ denote the shift on the path space $\PP^0$.
	For $\til\nu-$a.e.\ $\sigma \in \PP^0_\infty$, there exists $M \in \natls$ and  a  maximal component $C$ such that for all $i\geq M$, $\SSS^i (\sigma) \in  \PP_\infty(C)$.
\end{cor}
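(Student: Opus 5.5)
The plan is to view $\PP^0_\infty$ as the space of infinite directed paths in the finite digraph $\GG$ starting at the initial state $1$. I will first show that the set of paths visiting infinitely many distinct components has $\til\nu$-measure zero, and then show that paths eventually trapped in a non-maximal component also have measure zero. The first part is purely combinatorial. Order the strongly connected components of $\GG$ by reachability. A directed path can leave a component and never return, because otherwise the two components would merge. Since $\GG$ is finite, every infinite path $\sigma$ passes through finitely many components and is eventually contained in a single component $C(\sigma)$. So there exists $M$ with $\SSS^i(\sigma)$ a path in $C(\sigma)$ for all $i \geq M$. It remains to show that $C(\sigma)$ is maximal for $\til\nu$-a.e.\ $\sigma$, i.e.\ that its Perron--Frobenius eigenvalue equals $\lambda$.

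For the second part I would use the description of $\til\nu$ as a weak limit of (normalized) uniform measures on $\PP^0_n$, i.e.\ on spheres of radius $n$, which correspond bijectively to $\PP^0_n$ via the geodesic automatic structure. By Coornert's estimates, the sphere of radius $n$ has size $\asymp \lambda^n$, so the normalizing factor is comparable to $\lambda^{-n}$. Fix a non-maximal component $C'$ with eigenvalue $\lambda' < \lambda$. Consider the cylinder set of paths that enter $C'$ by time $k$ and stay in it up to time $n$. Its count is at most $(\#\text{prefixes of length} \le k)\cdot c\,\lambda'^{\,n}$, which is $O(\lambda^k \lambda'^{\,n})$. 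The uniform measure of this set on $\PP^0_n$ is therefore $O(\lambda^k(\lambda'/\lambda)^n)$. Passing to the limit and using the Gibbs/quasi-conformal property of $\nu$ (shadows of $g$ have measure $\asymp \lambda^{-|g|}$, by Coornert), the set of infinite paths that enter $C'$ by time $k$ and stay there forever has $\til\nu$-measure at most $\lim_n O(\lambda^k(\lambda'/\lambda)^n) = 0$. Taking a countable union over $k$ and over the finitely many non-maximal components gives measure zero, so $\til\nu$-a.e.\ path is eventually in a maximal component. An eventually-in-$C$ path also cannot later move into another maximal component, since it never leaves $C$; this gives uniqueness of $C$.

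The main obstacle is justifying the passage from counting in $\PP^0_n$ to bounds on $\til\nu$ of cylinder sets, i.e.\ the comparison $\til\nu(\text{cylinder of } w)\asymp \lambda^{-|w|}$. I would handle this through the shadow lemma for Patterson--Sullivan measure on hyperbolic groups (Coornert), together with the fact that the map from infinite accepted paths to $\pG$ is finite-to-one on shadows. With that in place, the measure of the set above is bounded by a sum over length-$n$ cylinders of $\lambda^{-n}$ times their count, which is the estimate used in the previous paragraph.
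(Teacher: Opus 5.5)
Your proof is correct, but it takes a different route from the paper. The paper gives no argument of its own. It quotes the fact that a directed path in $\GG$ meets at most one maximal component (\cite[Proposition 4.10]{calegarifujiwara}, \cite[Proposition 3.12]{BM19}), and refers to \cite[Corollary 3.16]{BM19} for the almost-sure statement.

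Your first step observes that the condensation of $\GG$ into strongly connected components is acyclic, so every infinite path is eventually trapped in one component. This shows that uniqueness of $C$ is purely combinatorial. The cited non-communication of maximal components is a stronger, genuinely geometric fact: no directed path runs from one maximal component to another, since that would produce on the order of $n\lambda^n$ accepted words of length $n$, contrary to Coornert's estimate. It is not needed for the corollary as stated.

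Your second step is a self-contained replacement for the citation, and it needs less than you list: only the upper half of the shadow lemma and the identity $\bbar e_*\til\nu=\nu$. Indeed, $\bbar e$ maps the cylinder of a length-$m$ word $w$ into a shadow $S(e(w),R)$ with $R$ depending only on $\delta$. So that cylinder has $\til\nu$-measure at most $\nu(S(e(w),R))\le C\lambda^{-m}$, and the finite-to-one property of $\bbar e$ plays no role.

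Run the estimate as in your last paragraph. Cover the set of paths that enter a non-maximal $C'$ by time $k$ and stay there by the $O(\lambda^k\lambda'^m)$ cylinders of length $m$, then let $m\to\infty$. Do not ``pass to the limit'' in the uniform measures of that closed set, since weak convergence gives no upper bound on the limit measure of a closed set.

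What each route buys: yours proves exactly the qualitative almost-sure statement from Perron--Frobenius counting and Coornert's shadow lemma, using nothing about how $\til\nu$ is built beyond its push-forward. The cited route comes with the non-communication structure and with finite-$n$ information: all but exponentially few length-$n$ paths spend all but a short prefix and suffix in a maximal component (cf.\ \cite[Lemma 3.4.2]{calegari-notes}). That finite-$n$ information is what the frequency arguments of \cite{BM19} use downstream.
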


Let $\til \GG$ denote the locally finite  rooted tree encoding $\PP^0$, the set of all  directed paths in $\GG$. Let $\partial \til \GG$ denote its Gromov boundary, so that the set of points $\eta \in \partial \til \GG$ is in natural bijective correspondence with $\PP^0$.
The following Lemma gives us a way of going between $\pG$ and $\PP^0$.

\begin{lemma}\label{lem-ctfsa}\cite[Lemma 3.5.1]{calegari-notes}\cite[Lemma 3.15]{BM19}
	The  map $e: \PP^0_n \to G$ admits a continuous extension  $\bbar e: \PP^0_\infty \to \partial G$. Further, there exists $M \in \natls$ such that for all 	$\xi \in \partial G$, $|\bbar{ e}^{-1} (\xi)| \leq M$.
\end{lemma}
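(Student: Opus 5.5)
The plan is to treat the two assertions of Lemma~\ref{lem-ctfsa} separately. Both rest on one fact: every $\sigma\in\PP^0_\infty$ evaluates, prefix by prefix, to a geodesic ray in $\Gamma$ starting at $1$. Write $\sigma_n$ for the length-$n$ prefix of $\sigma$ and $g_n = e(\sigma_n)$. Since the language accepted by $\GG$ consists of geodesic words, and prefixes of accepted paths are accepted, $n\mapsto g_n$ is a geodesic ray from $1$. We define $\bbar e(\sigma)$ to be the endpoint of this ray in $\pG$. The extension is then the natural one: $e$ on finite paths, together with $\bbar e$ on infinite paths, defined on the compactification $\partial\til\GG\cup\til\GG$.

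\textbf{Continuity.} Suppose $\sigma,\tau\in\PP^0_\infty$ agree on their first $n$ letters. Then the rays $g_k(\sigma)$ and $g_k(\tau)$ coincide for $k\le n$. Hence the Gromov product satisfies $(\bbar e(\sigma)\cdot\bbar e(\tau))_1\ge n - C\delta$, where $\delta$ is the hyperbolicity constant and $C$ is a universal constant. Neighbourhoods of a boundary point are described by large Gromov products, so $\bbar e$ is continuous; in fact it is Lipschitz with respect to a visual metric on $\pG$. The same estimate applies to sequences of finite paths converging to an infinite path, so the map on the whole compactification is continuous.

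\textbf{Bounded fibres.} Fix $\xi\in\pG$, and suppose $\sigma^1,\dots,\sigma^m$ are distinct elements of $\bbar e^{-1}(\xi)$. Their rays all start at $1$ and converge to $\xi$. By the thin-triangles property, two geodesic rays with common origin and common endpoint stay within distance $2\delta$ of each other at equal times, say $d(g_n(\sigma^i),g_n(\sigma^j))\le 2\delta$ for all $n$. Choose $N$ larger than the first index at which any two of the $\sigma^i$ differ. Then the prefixes $\sigma^1_N,\dots,\sigma^m_N$ are distinct accepted words. By uniqueness of the accepted word for each group element, these words evaluate to distinct elements $g_N(\sigma^i)$. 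All of these elements lie in the ball of radius $2\delta$ about $g_N(\sigma^1)$. That ball has at most $M := |B(1,2\delta)|$ elements, a number depending only on $\Gamma$. Therefore $m\le M$.

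The main point needing care is exactly this last step: turning distinct infinite paths into distinct group elements. This needs two things. First, the fellow-travelling bound $2\delta$ must be uniform for asymptotic rays with the same basepoint; this is standard, possibly with the constant replaced by a fixed multiple of $\delta$. Second, we need the property that each $g\in G$ has a unique accepted word. Without it, distinct paths could evaluate to the same elements at every time. That case would still be harmless, because a geodesic language with bounded multiplicity gives a bounded fibre by the same count, but Cannon's uniqueness makes the count immediate.
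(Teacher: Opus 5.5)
Your proof is correct and follows essentially the standard argument behind the cited result (\cite[Lemma 3.5.1]{calegari-notes}, \cite[Lemma 3.15]{BM19}). Continuity holds because a common prefix of length $n$ forces the Gromov product of the two endpoints to be at least $n$ minus a constant. The uniform fibre bound holds because rays from $1$ to a common $\xi$ fellow-travel at a fixed multiple of $\delta$, so by uniqueness of accepted words a fibre of $\bbar e$ injects into a ball of that radius. The one ingredient you use tacitly is that distinct directed paths from the initial state carry distinct words; this holds because the automaton is deterministic, and it is needed, since without it a single infinite word could label many paths in $\PP^0_\infty$.
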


The key group-theoretic conclusion that is drawn in \cite{BM19} is the existence of frequencies. Let $\PP_\infty^{\max} \subset \PP^0_\infty$ denote the collection of  semi-infinite geodesic words satisfying the conclusion of Corollary~\ref{cor-eventuallyunique}, i.e.\ directed paths eventually landing in a maximal component. Fix $\sigma \in \PP_\infty^{\max}$. Then for a finite geodesic word $w$, the \emph{frequency} $f_w (\sigma)$ is a normalized measure giving the proportion of occurrence of $w$ in $\sigma$. Then \cite[Lemma 3.20]{BM19} states that for $\til\nu-$almost all $\sigma \in \PP_\infty^{\max}$, the 
frequency $f_w (\sigma)$ exists. Let $\PP_\infty^{\prime}$ denote this full measure subset of $\PP_\infty^{\max}$ (and hence a full measure subset of $\PP^0_\infty$) for which the 
frequency $f_w (\sigma)$ exists.

Once the existence of frequencies is established, the argument in \cite{BM19} is a combination of  probabilistic and geometric  ideas (in \cite[Sections 4, 5]{BM19}) and does not involve the group structure of $G$ any further. There are two geometric facts used here:
\begin{itemize}
\item the Morse Lemma, stating that quasigeodesics track geodesics, and
\item the existence of quasiconvex hyperplanes in $\Gamma$.
\end{itemize}
Both hold in general hyperbolic metric spaces, as do the relevant probabilistic estimates.

The upshot of this is the following statement that establishes the existence part of Theorem~\ref{thm-vexists}.

\begin{prop}\label{prop-vexists}
 Let $\PP_\infty^{\prime}$ denote, as above, the $\til\nu-$full measure subset of $\PP^0_\infty$ for which the 
frequency $f_w (\sigma)$ exists for any finite geodesic word $w$.
Let $\bbar{e}(\PP_\infty^{\prime}) \subset \pG$ denote the set of points in $\pG$ that are ideal end-points of $\sigma \in \PP_\infty^{\prime}$ (note that this is a $\nu-$full measure subset of $\pG$ equipped with the Patterson-Sullivan measure
$\nu$).
Then, for $\xi \in \bbar{e}(\PP_\infty^{\prime})$, the velocity $v(\xi)$ in the direction of $\xi$ exists.
\end{prop}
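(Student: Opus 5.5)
Fix $\xi=\bbar e(\sigma)$ with $\sigma\in\PP_\infty^{\prime}$, and let $x_n=e(\sigma|_n)$ be the vertices along the word geodesic $\sigma$, so that $d(1,x_n)=n$. The plan has three stages: show $\E\T(1,x_n)/n$ converges along this particular geodesic; write it as an approximately additive functional of the word, so that its limit is a frequency-weighted average; and show the limit does not depend on how $x_n\to\xi$. By the Morse Lemma, any other sequence $y_n\to\xi$ with $d(1,y_n)=n$ stays within bounded distance of $x_n$, since geodesic rays to the same boundary point are uniformly close. Each edge weight has finite mean, so $|\E\T(1,y_n)-\E\T(1,x_n)|\le C\,d(x_n,y_n)\,\E X_e=O(1)$. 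Thus it suffices to prove convergence along $x_n$.

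To get additivity, I would cut $\sigma$ into consecutive blocks of length $L$ and compare $\T(1,x_{kL})$ with $\sum_{j<k}\T(x_{jL},x_{(j+1)L})$. Subadditivity gives the upper bound at once. For the lower bound, I would use a quasiconvex hyperplane (a bisector) transverse to $\sigma$ at each $x_{jL}$. The $\omega$-geodesic $[1,x_{kL}]_\omega$ must cross each hyperplane. Using the super-exponential tails (Assumption~\ref{assume-subexp}), together with the fact that $\omega$-geodesics are quasigeodesics with high probability and hence track $\sigma$ by the Morse Lemma, I would show the crossing point lies within distance $O(\log L)$ of $x_{jL}$, except on an event of tiny probability. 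Rerouting through the nearby point $x_{jL}$ then costs $O(\log L)$ per block in expectation. This should give
\[\E\T(1,x_{kL})=\sum_{j<k}\E\T(x_{jL},x_{(j+1)L})+O(k\log L).\]

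By $G$-invariance of the Cayley graph and of the i.i.d.\ law, $\E\T(x_{jL},x_{(j+1)L})$ depends only on the length-$L$ subword $w_j$ of $\sigma$ read between $x_{jL}$ and $x_{(j+1)L}$; write it $\phi(w_j)$. Then $\frac1{kL}\sum_{j<k}\phi(w_j)$ is an average of $\phi$ against the empirical distribution of aligned $L$-blocks in $\sigma$. Averaging over the $L$ possible alignments, which changes things by $O(L)$, converts this into the empirical distribution of all length-$L$ subwords of $\sigma$. That distribution converges to the frequencies $f_w(\sigma)$, which exist for $\sigma\in\PP_\infty^{\prime}$. So $\limsup$ and $\liminf$ of $\E\T(1,x_n)/n$ differ by at most $O(\log L/L)$, using also that $\E\T(1,x_n)$ is Lipschitz in $n$. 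Letting $L\to\infty$ gives existence of the limit.

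The main obstacle is the lower bound in the additivity step: controlling where the FPP geodesic crosses the transverse hyperplanes, uniformly in $j$, with only polynomially or logarithmically large deviation. I would first prove a large-deviation estimate, using super-exponential tails, saying that any path straying distance $r$ from $\sigma$ has $\omega$-length exceeding the word length by a linear amount in $r$ with probability $1-e^{-cr}$, and then take a union bound over the blocks.
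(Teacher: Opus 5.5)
Your outline is essentially the argument the paper defers to \cite[Sections 4--5]{BM19}. It has three parts: approximate additivity of $\E\T$ along the word geodesic via quasiconvex hyperplanes and tracking estimates with exponential tails; averaging over all length-$L$ subwords to bring in the frequencies $f_w(\sigma)$; and independence of the ray from the closeness of geodesic rays to $\xi$. The alignment averaging and the $\limsup/\liminf$ squeeze are fine, but two steps need repair.

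\textbf{The bounded-distance claim is false.} You assert that every sequence $y_n\to\xi$ with $d(1,y_n)=n$ stays within bounded distance of $x_n$. Convergence to $\xi$ only forces $(y_n\cdot\xi)_1\to\infty$. So $y_n$ may follow $[1,\xi)$ for about $n/2$ steps and then branch off, giving $d(x_n,y_n)\approx n$. For such $y_n$, $\E\T(1,y_n)/n$ mixes the velocities of two different directions and has no reason to have the same limit. The uniform closeness you invoke is a property of geodesic rays from $1$ to $\xi$. That is also the sense in which the paper actually uses Definition~\ref{def-vel}: its key fact compares $e(\sigma)$ and $e(\sigma')$ with $\bbar e(\sigma)=\bbar e(\sigma')=\xi$. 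If you restrict to $y_n$ on such rays (or at sublinear distance from one), your Lipschitz estimate is exactly what is needed.

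\textbf{The lower bound cannot rest on global quasigeodesicity of $[1,x_{kL}]_\omega$ plus the Morse Lemma, nor on a union bound over the blocks.} In general (e.g.\ if the support of $\rho$ is unbounded or accumulates at $0$), the quasigeodesic constants of $[1,x_{kL}]_\omega$ are not controlled uniformly in $k$, because rare local defects appear somewhere along a long geodesic. A union bound over $k$ blocks also fails: it forces a threshold of order $\log k$, hence a total error of order $k\log k$, which does not vanish after dividing by $kL$ and letting $k\to\infty$ with $L$ fixed.

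What you need is a tail bound localized at each cut point and uniform in the total length: the $\omega$-geodesic meets the hyperplane at $x_{jL}$ at distance $\ge r$ from $x_{jL}$ with probability at most $e^{-cr}$. This is Proposition~\ref{lem-effbt}, since points of that hyperplane project to $\sigma$ near $x_{jL}$. The relevant comparison is between the $\omega$-length of a far-crossing path and that of a competitor near $x_{jL}$, not with word length as in your last paragraph.

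Then bound the rerouting cost $\T(x_{jL},z_j)$ in expectation block by block and sum by linearity. Since exponential moments are finite, $\max_{z\in B_r(x_{jL})}\T(x_{jL},z)$ has second moment $O(r^2)$ even though $|B_r|$ grows exponentially. Cauchy--Schwarz against the $e^{-cr}$ tail then gives $O(1)$ per block. That is genuine almost-additivity, better than your $O(\log L)$, though $O(\log L)$ would also suffice.
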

Note that Proposition~\ref{prop-vexists} identifies a precise full measure 
subset of $\pG$ equipped with the Patterson-Sullivan measure $\nu$ for which velocity exists, i.e.\ the existence part of Theorem~\ref{thm-vexists} holds. A key fact that feeds into 
the statement of Proposition~\ref{prop-vexists} is the following: Let $\sigma, \sigma' \in \PP_\infty^{\prime}$ be such that  $\bbar{e} (\sigma) = \bbar{e} (\sigma') = \xi$. Then
$v(\xi)$ is independent of the choice of the path $\sigma$ or $ \sigma'$ used. This is because ${e} (\sigma)$ and ${e} (\sigma')$ are semi-infinite geodesic rays in the Cayley graph $\Gamma$ of $G$ both converging to $\xi$. Hence ${e} (\sigma)$ and ${e} (\sigma')$ lie within a $2 \delta$ neighborhood of each other (where $\delta$ is the hyperbolicity constant of $\Gamma$). It now follows from  \cite[Remark 5.3]{BM19}  that $v(\xi)$ is independent of the choice of  $\sigma, \sigma'$.

 To conclude that $v(\xi)$ is constant $\nu-$almost everywhere, we use the fact that $G$ acts ergodically on $(\pG, \nu)$. However, to apply ergodicity of the $G$ action, we need to observe first that  $v(\xi)$ is a measurable function on $\pG$. Recall that $\bbar{e}( \PP_\infty^{\prime})$ is a full measure subset and it suffices that $v(\xi)$ is well-defined on this set.

To establish measurability of $v$, we start off in the path space $\PP_\infty^{0}$.  In \cite[Proposition 3.16]{BM19}, the existence of frequencies is established. This is really a standard fact about finite state Markov chains and is proved in detail in \cite[Appendix A]{BM19}.
The proof there establishes that the frequency function not only exists but is measurable. From this, it follows that $v(\sigma)$ is a measurable function on the full measure subset $\PP_\infty^{\prime}$ of $\PP_\infty^{0}$
{that is constant on fibers of $\bar{e}$}. We emphasize that
\cite[Definition 3.10]{BM19} and the discussion there  gives us a well-defined way of lifting the Patterson-Sullivan measure $\nu$ on $\pG$ to 
a measure $\til \nu$ on the path space $\PP^0_\infty$, so that {the push-forward of $\til \nu$ under $\bbar{e}$ equals $\nu$, and $v$ is constant on the fibers of 
$\bbar{e}$ in 
 $\PP_\infty^{\prime}$. }).

It remains to show that $v(\sigma)$ `pushes forward' to a measurable function $\bbar{v}(\xi)$ on $\pG$ under $\bbar{e}$. 
By Lemma~\ref{lem-ctfsa}, $\bbar{e}$ is continuous and finite-to-one, with
the cardinality of $\bbar{e}^{-1}(\xi)$ uniformly bounded. Since 
$\bbar{e}$ is a continuous map between compact sets, it is a quotient map. We spell this out. Let $h, \bbar{h}$ be continuous functions (to $\R$) on $\PP^0_\infty$ and $\pG$ respectively, such that $h =   \bbar{h}\circ\bbar{e}$. Then $h$ is continuous if and only if $\bbar{h}$ is continuous.
The same holds for closed (and hence compact) subsets of $\PP^0_\infty$ and their images in $\pG$ under $\bbar{e}$.

To show that $v(\sigma)$ `pushes forward' to a measurable function $\bbar{v}(\xi)$ on $\pG$ under $\bbar{e}$, we use Lusin's theorem. 
Let $\ep >0$.
{By (the forward direction of) Lusin's  Theorem $v$ is continuous on a closed subset $K_\ep$ of measure at least $(1-\ep)$}.
Since $\bbar{e}$ is a quotient map, {$\bbar{v} \circ \bbar{e}$ is continuous on the image of this  set $K_\ep$. Note that this part of the argument does not require $K_\ep$ to be saturated. However, since $\bbar{v} \circ \bbar{e}$ is continuous on the image of $K_\ep$, and since 
$\bbar{e}$ is a quotient map,
it follows that $v$ is in fact continuous on the saturated set $\bbar{e}^{-1}\circ\bbar{e}(K_\ep)$. Thus, without loss of generality, we may assume that the closed subset $K_\ep$ we started with, is saturated}. The image $\bbar{e}(K_\ep)$ is of measure at least $(1-\ep)$
since $\bbar{e}$ preserves measures. An alternate way to see that the measure of the image of $K_\ep$ is close to 1 is by noting that
 small diameter sets go to small diameter sets 
under $\bbar{e}$ and that the Patterson-Sullivan measure is Ahlfors regular. Since $\bbar{e}$ is a quotient map, it follows that   $\bbar{v}$ is continuous on a set of measure close to 1.
Since $\ep>0$ is arbitrary it follows from  (the converse direction of) Lusin's theorem that $\bbar{v}$  is measurable on $\pG$ as desired.

Finally, {observe that by the translation invariance of $\P$, we have $\E T(1,x_n)=\E T(g,gx_n)$, and it is easy to conclude from there using the triangle inequality that $v(\xi)=v(g\xi)$ provided one of them exists, and hence $v$ is $G$-invariant}. Therefore, as noted before, we can apply  ergodicity of the $G$ action on $(\pG, \nu)$ to conclude that $v$ is constant almost everywhere, concluding the proof of Theorem~\ref{thm-vexists}.

\subsection{Bi-velocity} 
Next we state a simple extension of Theorem \ref{thm-vexists}. The proof is postponed to Section \ref{s:bivel}.


\begin{defn}\label{def-bivel}
	Given $x, y \in \partial G, x\neq y$, let $(x,y)$ be any bi-infinite geodesic from $x$ to $y$. Let $x_n, y_n \in (x,y)$ be such that $x_n \to x; \, y_n \to y$. Then define the bi-velocity $$v(x,y):=
	\lim_{n \to \infty} \frac{\E (T(x_n,y_n))}{d(x_n,y_n)}, $$ if the limit exists and is {independent of the choice of the bi-infinite geodesic}.
\end{defn}

We need to introduce  some notation. 

\begin{itemize}
	\item  $\partial^2 G:= \{(\xi, \xi'): \xi, \xi' \in \pG; \, \xi\neq \xi'\}$.
	\item  {$\partial^3 G:= \{(\xi, \xi',\xi''): \xi, \xi',\xi'' \in \pG; \, \xi\neq \xi'\neq \xi''\ne \xi \}$.}
\end{itemize}

\begin{prop}\label{prop-bivel} Let $v$ denote the constant velocity 
	in Theorem~\ref{thm-vexists}.
	For a.e.\ $ (x,y) \in \partial^2 G$  (with respect to the $\nu \times \nu$ on $\partial^2 G$), 	$v(x,y)=v$.
\end{prop}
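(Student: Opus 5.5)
Let $\Xi\subset \pG$ be the $\nu$-full measure set from Theorem~\ref{thm-vexists} (equivalently the set $\bbar{e}(\PP^\prime_\infty)$ of Proposition~\ref{prop-vexists}) on which $v(\xi)$ exists and equals the constant $v$. By Fubini, $\Xi\times\Xi\cap\partial^2 G$ has full $\nu\times\nu$ measure in $\partial^2G$. We will show that $v(x,y)=v$ for every $(x,y)\in \partial^2 G$ with $x,y\in \Xi$. Fix such a pair and any bi-infinite word geodesic $(x,y)$. The idea is to split the bi-infinite geodesic at a point near the identity and compare it with two geodesic rays from $1$, one towards $x$ and one towards $y$.

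\emph{Step 1 (geometric reduction).} Choose word geodesic rays $[1,x)$ and $[1,y)$. By $\delta$-hyperbolicity, the geodesic $(x,y)$ passes within a bounded distance $C=C(x,y)$ of $1$. Let $z\in(x,y)$ be a point with $d(1,z)\le C$. Then the subrays $[z,x)$ and $[z,y)$ of $(x,y)$ eventually lie within $2\delta$ of $[1,x)$ and $[1,y)$ respectively. Let $x_n\to x$ and $y_n\to y$ on $(x,y)$. Take $x_n'\in[1,x)$ and $y_n'\in[1,y)$ with $d(x_n,x_n')$ and $d(y_n,y_n')$ uniformly bounded, say by $C'$. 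Then $d(1,x'_n)=d(z,x_n)+O(1)$, $d(1,y_n')=d(z,y_n)+O(1)$, and $d(x_n,y_n)=d(z,x_n)+d(z,y_n)$.

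\emph{Step 2 (upper bound).} For each edge $e$, $\E\,T$ across $e$ equals $\E X_e<\infty$, so the triangle inequality gives
\[
\E T(x_n,y_n)\le \E T(1,x_n')+\E T(1,y_n')+O(1).
\]
By Theorem~\ref{thm-vexists}, using the fact that $v(\xi)$ does not depend on the chosen sequence (cf.\ \cite[Remark 5.3]{BM19}), the right-hand side is $v\,d(x_n,y_n)+o(d(x_n,y_n))$. Hence $\limsup_n \E T(x_n,y_n)/d(x_n,y_n)\le v$.

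\emph{Step 3 (lower bound, the main obstacle).} The triangle inequality gives the wrong direction here. The needed input is that the FPP geodesic $[x_n,y_n]_\omega$ passes near $z$ with high probability. I would invoke the \cite{BM19} result that FPP geodesics stay, with overwhelming probability, within a logarithmic neighbourhood of word geodesics; this uses Assumption~\ref{assume-subexp} together with the Morse lemma. On that event, $[x_n,y_n]_\omega$ contains a vertex $w$ with $d(w,z)\le K\log d(x_n,y_n)$, so
\[
T(x_n,y_n)= T(x_n,w)+T(w,y_n)\ge T(x_n,z)+T(z,y_n)-2T(w,z).
\]
I would then take the union over the polynomially many candidates $w$ and use exponential moments to get $\E\max_w T(w,z)=O(\log n)$. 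The complementary event has probability decaying faster than any power of $n$, and the second moment of $T(x_n,y_n)$ grows only polynomially, so the contribution of that event to the expectation is $o(n)$. This gives
\[
\E T(x_n,y_n)\ge \E T(x_n',1)+\E T(1,y_n')-O(\log n)\ge v\,d(x_n,y_n)-o(n).
\]
Since the word geodesic and the choice of $x_n,y_n$ were arbitrary, the limit exists, is independent of choices, and equals $v$.

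If the passing-near-$z$ estimate is not directly available in \cite{BM19}, I would derive it from the fact that any path from $x_n$ to $y_n$ avoiding a large ball around $z$ has word length exponentially larger than $d(x_n,y_n)$. A large-deviation bound for $\omega$-lengths of paths of a fixed word length, combined with a count of such paths, should then show that such detours are not $\omega$-geodesic.
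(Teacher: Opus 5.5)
Your proposal is correct and follows essentially the same route as the paper: the upper bound comes from concatenating through a point of $(x,y)$ near $1$. The lower bound comes from forcing $[x_n,y_n]_\omega$ through a small ball around that point, paying a local cost there, and absorbing the bad event via Cauchy--Schwarz against an $O(n^2)$ second moment. The differences are minor: the paper makes the full-measure set $G$-invariant and translates so that $(x,y)$ passes through $1$ (instead of comparing with fellow-travelling rays from $1$), and it uses Proposition~\ref{prop-effbt} directly (a fixed radius $M$ with exponential tails uniform in $m,n$, plus truncating the edge weights in $B_M(1)$ at a level $N$) in place of your $K\log n$ radius and $\E\max_w T(w,z)=O(\log n)$ bound, so your fallback derivation is not needed (and as phrased it is slightly off: detours avoiding $B_r(z)$ are long exponentially in $r$, not in $d(x_n,y_n)$).
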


The proof of this proposition requires a hyperplane construction from \cite{BM19} and will be given later in Section \ref{sec-coalescedistrbn} once this construction is recalled.

\subsection{Linear growth of variance}
Theorem \ref{thm-vexists} establishes the first order behavior of the first passage times. The next natural question is to investigate the fluctuations of the first passage times. To this end, the following theorem was established in \cite{BM19}.

\begin{theorem}[{\cite[Theorem 8.1]{BM19}}]
	\label{t:linvar}
	For $\xi\in \pG$ and a geodesic ray $[1,\xi)=\{1,x_1,x_2,\ldots,x_n, \ldots\}$, there exists $0<C_1<C_2<\infty$ such that 
	$$ C_1 n\leq \mathrm{Var} (T(1,x_n))\leq C_2 n.$$
\end{theorem}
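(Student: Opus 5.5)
The plan is to prove the upper and lower bounds separately, working along the fixed word geodesic ray $[1,\xi)=\{1,x_1,x_2,\dots\}$.

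For the upper bound I would use the Efron--Stein inequality. Resampling the weight $X_e$ changes $T(1,x_n)$ only when $e$ lies on the geodesic $\Upsilon(1,x_n)$, and then by at most $X_e'$, where $X_e'$ is an independent copy. This gives
$$\mathrm{Var}\, T(1,x_n)\le \sum_e \E\bigl[(X_e')^2 \ind_{e\in \Upsilon(1,x_n)}\bigr]\le C\,\E|\Upsilon(1,x_n)|.$$
It then suffices to show $\E|\Upsilon(1,x_n)|=O(n)$. Here I would use Assumption~\ref{assume-subexp} together with continuity of $\rho$ (no mass at $0$), via a standard Kesten-type estimate: for small $a$ and large $K$, the probability that some self-avoiding path of length $\ge Kn$ starting at $1$ has passage time at most $Cn$ is exponentially small. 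The number of such paths grows at most exponentially in the length, and small passage times are rare. Combined with $T(1,x_n)\le \sum_{i} X_{e_i}$ along the word geodesic, which has exponential tails at scale $n$, this gives $|\Upsilon(1,x_n)|\le Kn$ outside an exponentially small event. The tail contribution is handled by a crude bound, which yields $\mathrm{Var}\le C_2 n$.

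For the lower bound, the key geometric input is that in a hyperbolic space every path from $1$ to $x_n$ must pass near each of roughly $n/L$ separating structures. I would use the quasiconvex hyperplanes (separating sets $H_i$ through $x_{iL}$) already used in \cite{BM19}. Each $H_i$ separates $1$ from $x_n$, and any $\omega$-geodesic with length $O(n)$ must cross $H_i$ within a bounded neighbourhood of $x_{iL}$, with high probability, by the Morse lemma applied to the random geodesic. I would then apply a martingale-difference decomposition. Let $\mathcal F_i$ be generated by the weights of edges in a block $B_i$ around $x_{iL}$ (roughly, the annular slab between $H_{i-1}$ and $H_i$). Writing $T-\E T=\sum_i \Delta_i$ with orthogonal increments $\Delta_i=\E[T\mid\mathcal F_{\le i}]-\E[T\mid \mathcal F_{<i}]$, it suffices to show $\E\Delta_i^2\ge c$ for a positive fraction of $i$.

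To obtain $\E\Delta_i^2\ge c$ I would use a resampling argument inside a ball around $x_{iL}$. With probability bounded below, the geodesic passes through a small ball $B(x_{iL},r)$. Raising all weights in that ball, an event of positive probability independent of the outside configuration, changes $T$ by a definite amount $\ge \epsilon$. By the standard bound $\E\Delta_i^2\ge \E[\mathrm{Var}(T\mid \mathcal F_{\neq i})]$-type arguments (as in Newman--Piza), this gives a uniform positive lower bound on the conditional variance contribution. The main obstacle is this step: ensuring uniformly that the geodesic passes through a bounded neighbourhood of $x_{iL}$ with probability bounded below, and that modifying the local weights changes $T$ by a definite amount rather than just rerouting around the modified region. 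I would try first the hyperbolic ``thin bottleneck'' property: detours around $B(x_{iL},r)$ that avoid the ball cost exponentially more word length than $r$ (divergence of geodesics), hence cost more passage time with high probability. This forces the geodesic through a bounded neighbourhood, so that increasing the weights there (or decreasing them, using that the support of $\rho$ is not a single point) changes $T$ by a positive amount. Summing over $\asymp n/L$ blocks then gives $\mathrm{Var}\ge C_1 n$.
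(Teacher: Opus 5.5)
Your upper bound works: it is Kesten's standard argument. One correction: use the one-sided Efron--Stein inequality $\mathrm{Var}\,T\le\sum_e\E[(T^{(e)}-T)_+^2]$. It is $(T^{(e)}-T)_+$, not $|T^{(e)}-T|$, that vanishes for $e\notin\Upsilon(1,x_n)$ and is at most $X'_e$ otherwise.

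The architecture of your lower bound is also the natural Newman--Piza route: independent local $\sigma$-algebras along $[1,x_n]$, each contributing a constant. The paper itself gives no proof here and only cites \cite[Theorem 8.1]{BM19}. However, the lower bound has a genuine gap at exactly the step you flagged, and the inequality you invoke is reversed.

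\textbf{The reversed inequality.} $\E\Delta_i^2\le\E[\mathrm{Var}(T\mid\mathcal F_{\ne i})]$ is Efron--Stein, an upper bound. What is true is $\E\Delta_i^2\ge\mathrm{Var}(\E[T\mid\mathcal F_i])$. So it is not enough that $T$ moves when the block is resampled with the outside frozen: for $T=X_1Y$ with $Y=\pm1$ symmetric and independent of $X_1$, one has $\E[T\mid X_1]\equiv0$. You must show that $\E[T\mid\mathcal F_i]$, with the outside configuration $\eta$ averaged out, separates two block events of positive probability.

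Monotonicity of $T$ in the weights does this. Couple block configurations $\sigma^-\le\sigma^+$ with all weights in intervals $I_1<I_2$ of equal positive $\rho$-mass; these exist because $\rho$ is continuous. Then $T(\sigma^+,\eta)\ge T(\sigma^-,\eta)$ always, with a gain of at least $\inf I_2-\sup I_1$ whenever the $(\sigma^+,\eta)$-geodesic uses an edge of the block. So the real requirement is a lower bound on $\P_\eta(\text{the }(\sigma^+,\eta)\text{-geodesic meets block }i)$, uniform over $\sigma^+$ in the modified event and over $i,n$.

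\textbf{Why your bottleneck heuristic does not give this.} An $\omega$-geodesic is not a uniform quasigeodesic, so the Morse lemma does not apply. Deterministic exponential divergence only places a path of word length $O(n)$ within $\delta\log_2 n+O(1)$ of $x_{iL}$. Passage within a \emph{bounded} distance, with probability bounded below independently of $n$, is a genuinely probabilistic multi-scale estimate: Propositions~\ref{prop-effbt} and~\ref{lem-effbt}. The random geodesic may stay far from the word geodesic over long stretches, so a single local detour comparison does not suffice.

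Moreover, those estimates hold under $\P$. Your modified block event has probability exponentially small in the number of edges of $B(x_{iL},r)$, i.e.\ doubly exponentially small in $r$. Conditioning on it destroys an $e^{-cr}$ error bound, so nothing prevents the modified geodesic from rerouting.

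\textbf{What closes the gap.} You need a bottleneck statement that is robust under the modification. One option is local events, determined by the block configuration alone, that force crossing geodesics through a fixed segment, as in Proposition~\ref{bmproppositive} and Remark~\ref{rmk-hyp2hyp}. The other is the following monotonicity trick.
\begin{itemize}
\item For fixed $\eta$, the cheapest path avoiding the block $S_i$ does not depend on $\sigma$. So the event that the geodesic meets $S_i$ is decreasing in $\sigma$.
\item Hence it holds for all $\sigma^+\le M$ as soon as it holds at $\sigma\equiv M$, and it holds at $\sigma\equiv M$ whenever it holds at some $\sigma\ge M$.
\item Take $M$ with $\rho([0,M])>0$ and $\rho([M,\infty))^{|S_i|}\ge\tfrac12$ (possible since $\rho$ has no atoms), and choose $I_1<I_2\subset[0,M]$.
\item Then $\P_\eta(\cdot)\ge1-2\,\P([1,x_n]_\omega\text{ avoids }S_i)\ge1-2e^{-c(r-1)}$ by Proposition~\ref{lem-effbt}, since some vertex of $[1,x_n]_\omega$ projects boundedly close to $x_{iL}$.
\end{itemize}
Take $S_i$ to be the edges of $B(x_{iL},r)$ with $L>2r+2$, and use $\mathrm{Var}\,T\ge\sum_i\mathrm{Var}(\E[T\mid\mathcal F_{S_i}])$. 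This gives $\mathrm{Var}\,T(1,x_n)\ge c(r)\,n/L$.
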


The above theorem resolved  a conjecture of Benjamini, Tessera and Zeitouni \cite{bz-tightness,benjamini-tessera} affirmatively.

\section{Geodesic rays, coalescence, and geodesic trees}\label{sec-btree}
The aim of this section is to summarize the main results of \cite{BM19,BM24} regarding semi-infinite and bi-infinite random geodesics in hyperbolic FPP and associated geodesic trees. We start with recalling the basic results from \cite{BM19} about the direction of random geodesics.


%
\begin{defn}\label{def-fppray} 
	For  $\om \in \omegap$, a semi-infinite (resp.\ bi-infinite) path $\sigma_\omega$ is said to be an $\om-$geodesic ray (resp.\ a bi-infinite
		$\om-$geodesic) if every finite subpath of   $\sigma_\omega$ is an $\om-$geodesic.
	
	A path $\sigma$ is said to  accumulate on $\xi \in \pG$ if there exist $v_n \in  \sigma$ such that $v_n \to \xi$ as $n \to \infty$. 
		An $\om-$geodesic ray $\sigma_\omega$ accumulating on $\xi\in \pG$  has direction $\xi$ if its only accumulation point  in $\pG$ is $\xi$.
\end{defn}

The following theorem says that almost surely all $\omega$-geodesic rays have a direction and that
every $\xi\in \pG$ is the direction of an $\omega$-geodesic ray.

\begin{theorem}\cite[Theorems 6.6, 6.7]{BM19}
	\label{bmdirexists}
	For $o\in \Gamma$, for a.e. $\omega \in \omegap$, all $\om$-geodesic rays starting at $o$ have a direction $\xi\in \pG$.

	Fix $\xi \in \partial G$ and $x_{n}\in \Ga$ such that $x_{n}\to \xi$. For a.e.\ $\om \in \omegap$ the sequence of $\om-$geodesics $[o,x_n]_\om$ from $o$ to $x_n$ converges (up to extracting a subsequence) to an $\om-$geodesic ray $[o,\xi)_\om$ having direction $\xi$.
\end{theorem}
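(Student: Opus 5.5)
Both parts rest on one geometric input: random geodesics cannot stray far from word geodesics. Concretely, I would show that for any $x,y$ the $\omega$-geodesic $[x,y]_\omega$ lies in a neighbourhood of the word geodesic $[x,y]$ whose width grows at most like a small power (or logarithm) of the distance from $x$. The tool is Assumption~\ref{assume-subexp}: super-exponential tails give exponential concentration of $\ell_\omega(\gamma)$ for every fixed path $\gamma$. Together with the linear lower bound $T(x,y)\ge c\,d(x,y)$, this holds with high probability uniformly over all paths of a given length, because there are only exponentially many such paths and exponential tails beat a union bound. Hence $\omega$-geodesics are quasigeodesics at large scales with overwhelming probability. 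The Morse Lemma then shows they stay within bounded distance of word geodesics, except for errors that can be controlled scale by scale. Borel--Cantelli over all word-distance scales makes this hold for all but finitely many scales almost surely.

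For the first statement, fix $o$ and a typical $\omega$, and let $\sigma_\omega$ be an $\omega$-geodesic ray from $o$. Let $v_n$ be the points of $\sigma_\omega$ at word distance about $n$ from $o$. The segment $[o,v_n]_\omega\subset\sigma_\omega$ is an $\omega$-geodesic. By the tracking estimate, for large $n$ and all $m>n$, the points $v_n$ lie close to the word geodesic $[o,v_m]$. So the Gromov products $(v_n\cdot v_m)_o$ tend to infinity, $(v_n)$ is Gromov-Cauchy, and it converges to a single $\xi\in\pG$. Any other accumulation point would be approached by points of the ray. Each such point lies near some $[o,v_m]$, hence has large Gromov product with $v_m$, which forces that accumulation point to equal $\xi$. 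The uniformity over all rays from $o$ is essential. It comes from the tracking estimate being uniform over all endpoints in large balls, which a union bound over exponentially many points gives, again using super-exponential tails.

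For the second statement, local finiteness and a diagonal argument give a subsequence of $[o,x_n]_\omega$ converging edge by edge, since at each vertex the next edge eventually stabilises along the subsequence. The limit is an $\omega$-geodesic ray, because each finite subpath is a limit of $\omega$-geodesic segments and geodesics are unique almost surely. By the tracking estimate applied to $[o,x_n]_\omega$, its initial segments lie near the word geodesics $[o,x_n]$, which converge to a word ray $[o,\xi)$. So the limit ray accumulates at $\xi$, and by the first part its direction is $\xi$.

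The main obstacle is the uniform tracking lemma. The Morse Lemma needs a genuine quasigeodesic, but an $\omega$-geodesic is only a quasigeodesic at large scales. I would first try a scale-by-scale argument: bound the probability that an $\omega$-geodesic makes an excursion of depth $D$ from the word geodesic. Such an excursion has word length exponential in $D$ while saving little over the detour, and the cost of this is super-exponentially small. These bounds are then summed over starting points and scales.
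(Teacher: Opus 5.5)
Your overall architecture is sound: Gromov products along the ray for the first part, and diagonal extraction plus fellow-travelling of $[o,x_n]$ with $[o,\xi)$ for the second. The gap is the tracking lemma everything rests on. As you state it (width a small power or logarithm of the distance from $o$, uniform over all endpoints), it is stronger than your sketch gives, and stronger than one should expect to be true.

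\textbf{Uniformity has to come from the right union bound.} The endpoints $v_m$ depend on $\omega$, so the estimate must hold for all endpoints $y$ at once. That can only come from a union bound over the $\asymp\lambda^r$ possible locations at distance $r$ from $o$. A union bound over endpoints $y\in B_M(o)$, as you suggest, gives a width of order $\log M$. That says nothing about $v_n$ once $\log m\gtrsim n$.

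\textbf{Not every excursion is exponentially long.} Your excursion estimate assumes an excursion of depth $D$ has word length exponential in $D$. That holds only when the excursion stays far from the word geodesic over a long stretch. An out-and-back excursion (leave near $p$, run out to depth $D$, return near $p$) has word length only about $2D$. Whether a cheap one exists at a given location is a lower-tail event for about $2D$ edges. It is governed by the mass of $\rho$ near $0$; the super-exponential upper tail plays no role.
\begin{itemize}
\item If $\rho$ has positive density at $0$, this event has probability at least $e^{-O(D\log D)}$: take all its edges lighter than $x/(2D)$ and all adjacent edges heavier than $x$, for a fixed small $x$.
\item Against $\lambda^r$ locations, the union bound therefore cannot exclude depths below order $r/\log r$.
\item One expects such detours to actually occur, so a uniform width $r^{\beta}$ with $\beta<1$, or $C\log r$, should not be expected in general.
\end{itemize}

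\textbf{The Morse-lemma step has the same defect.} After a union bound over $e^{O(r)}$ starting points, $\omega$-geodesics near distance $r$ are quasigeodesics only with additive constant of order $r$. The Morse lemma then gives linear tracking, not sublinear. Separately, the lower bound $\ell_\omega(\gamma)\ge\epsilon\,|\gamma|$ for long paths comes from $\rho$ having no mass at $0$, not from Assumption~\ref{assume-subexp}.

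\textbf{What is true, and enough, is linear progress of the projection,} with the union bound taken over centres rather than endpoints.
\begin{itemize}
\item Apply Proposition~\ref{lem-effbt} at each $z\in S_r(o)$ with $R=Kr$ and $cK>\log\lambda$.
\item Sum $C\lambda^r e^{-cKr}$ over $r$ and use Borel--Cantelli, handling the finitely many small-$r$ centres separately.
\item This gives, almost surely, a constant $C_\omega$ such that for all $y$ and all $w\in[o,y]_\omega$ we have $d(w,\Pi_{[o,y]}(w))\le K\,d(o,\Pi_{[o,y]}(w))+C_\omega$.
\item Since $(w\cdot y)_o\ge d(o,\Pi_{[o,y]}(w))-O(\delta)$, this yields $(w\cdot y)_o\ge (d(o,w)-C_\omega)/(K+1)-O(\delta)$, uniformly in $y$.
\end{itemize}
You must use this projection form. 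Your bound $(v_n\cdot v_m)_o\ge n-2\,d(v_n,[o,v_m])$ is vacuous once the width is linear. With the projection form, both of your parts go through; note that the ray is simple, so it eventually leaves every finite ball.

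\textbf{Comparison with the paper.} The paper itself does not prove this result; it only cites \cite{BM19}. The tools it records, Propositions~\ref{prop-effbt} and \ref{lem-effbt}, are exactly centre-localized estimates that are uniform over all endpoints, and they give an even shorter route.
\begin{itemize}
\item First part: suppose a ray from $o$ had two accumulation points $\eta\ne\eta'$. Then infinitely many disjoint subsegments running from near $\eta$ to near $\eta'$ would be $\omega$-geodesics whose word geodesics pass near a fixed point of $(\eta,\eta')$. By Proposition~\ref{prop-effbt} they would all meet one finite ball, which is impossible for a simple path.
\item Second part: suppose the limit ray had direction $\eta\ne\xi$. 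Apply Proposition~\ref{lem-effbt} at the point where $[o,\xi)$ and $[o,\eta)$ diverge; points of the ray near $\eta$ lie on $[o,x_n]_\omega$, project near that point, and are arbitrarily far from it, a contradiction.
\end{itemize}
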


 \subsection{Coalescence and backward trees}
		Two semi-infinite paths $\sigma_1, \sigma_2 \subset \Gamma$ are said to {\bf coalesce} if beyond some $x_0$, they coincide.
	For FPP on $\Ga$, we showed in \cite{BM19} that semi-infinite geodesic rays in a fixed direction $\xi\in \pG$ almost surely coalesce, i.e.\ the set of edges in the symmetric difference of any two such geodesic rays outside a sufficiently large ball centered at the identity element is empty. More precisely we have the following theorem.

	\begin{theorem}\cite[Lemma 7.8, Theorem 7.9]{BM19}\label{bmthmcoalesce} 
		Given $\xi\in \partial G$, there exists a full measure subset $\Omega_\xi \subset \omegap$ such that for all  $\omega\in \Omega_\xi$, and each $o\in \Ga$,  there exists a unique $\omega$-geodesic ray (denoted $[o,\xi)_{\omega}$) starting from $o$ in direction $\xi$. Further, given any direction $\xi$,	for  all  $\omega\in \Omega_\xi$, and any $o_1, o_2 \in G$
		the  $\om-$geodesic rays $[o_1,\xi)_\om$ and $[o_2,\xi)_\om$ coalesce.
	\end{theorem}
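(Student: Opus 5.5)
We prove the two assertions of Theorem~\ref{bmthmcoalesce} in turn: first existence and uniqueness of $[o,\xi)_\om$, then coalescence. For existence, Theorem~\ref{bmdirexists} already supplies, for each fixed $o$ and a.e.\ $\om$, an $\om$-geodesic ray $[o,\xi)_\om$ with direction $\xi$. Since $\Ga$ is countable, we intersect these full measure sets over all $o\in\Ga$.

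\textbf{Coalescence.} Fix $o_1,o_2$ and two $\om$-geodesic rays $\sigma_1,\sigma_2$ from them in direction $\xi$. By the Morse lemma applied to FPP geodesics (the tracking estimates of \cite{BM19}), both rays lie, with high probability, in a bounded neighbourhood of the word geodesic ray $[1,\xi)$ far out. Along $[1,\xi)$ we use the quasiconvex hyperplanes of \cite{BM19}: a sequence of disjoint hyperplanes $H_k$ crossing $[1,\xi)$ at distance roughly $kL$, separating $o_1,o_2$ from $\xi$. Each $\sigma_i$ must cross each $H_k$ within a bounded window $W_k$ around $[1,\xi)\cap H_k$.

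\textbf{Local modification.} This is the main step and the main obstacle. We need a resampling argument showing that, for each $k$, with probability bounded below by some $p>0$ independent of $k$, the environment near $W_k$ forces every $\om$-geodesic crossing $W_k$ to pass through a single edge or vertex. Concretely, we make a short path of very small weights through $W_k$ and large weights on a surrounding shell; the continuity and tail assumptions on $\rho$ give positive probability for this. The difficulty is that geodesics are not local objects, so we must verify that the modification does not alter which window the rays use. We would handle this with the exponential concentration from Assumption~\ref{assume-subexp} together with the hyperbolic tracking bounds, following the approach of the Damron--Hansen/Licea--Newman type arguments adapted to the hyperplanes.

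\textbf{Conclusion.} Since windows at well-separated $k$ depend on disjoint edge sets, a Borel--Cantelli/independence argument gives infinitely many $k$ at which both rays pass through a common vertex $z$. Uniqueness of finite $\om$-geodesics (from the continuity of $\rho$) then forces $\sigma_1$ and $\sigma_2$ to agree beyond $z$, which gives coalescence. Uniqueness of $[o,\xi)_\om$ follows the same way: two such rays from the same $o$ meet at some far vertex $z$, so they coincide on $[o,z]$ by uniqueness of finite geodesics, and hence everywhere. Finally, we intersect over the countably many pairs $(o_1,o_2)$ to obtain $\Omega_\xi$.
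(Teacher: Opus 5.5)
Your overall architecture matches the one behind the cited result: hyperplanes along $[1,\xi)$, meeting events of uniformly positive probability in well-separated slabs, infinitely many meetings almost surely, then uniqueness of finite $\omega$-geodesics to get coalescence and uniqueness of rays. However, the step linking your local events to the actual rays fails as written. It is not true that each $\sigma_i$ ``must cross each $H_k$ within a bounded window $W_k$''. Nor do the rays stay, with high probability, in a bounded neighbourhood of $[1,\xi)$ far out. Proposition~\ref{prop-effbt} only gives, for each fixed $k$, probability at most $e^{-cR}$ that a ray avoids the $R$-neighbourhood of the axis point on $H_k$, and this is not summable in $k$ for fixed $R$. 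Consequently, the events you feed into independence and Borel--Cantelli (the environment near $W_k$ is good) are independent but do not imply that $\sigma_1,\sigma_2$ meet. The events that would imply it (good environment near $W_k$ \emph{and} both rays crossing through $W_k$) are neither local nor independent in $k$. You cannot repair this by bounding the joint probability below by $p-2e^{-cR}$, since $p$ deteriorates as the window grows. Even a uniform lower bound $q$ from a genuine resampling argument would only give, via reverse Fatou, $\P(\text{meeting i.o.})\ge q$, and you would then need a separate zero--one argument.

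The paper, through Proposition~\ref{bmproppositive} and Remark~\ref{rmk-hyp2hyp}, avoids this by making the good event slab-wide rather than window-based. The event $B_i$ is measurable with respect to the weights in $N_{D/10}(\SSS(i,D))$. On $B_i$, any pair of $\omega$-geodesics from $H_{iD}$ to $H_{(i+1)D}$ intersect on $[x_{iD},x_{(i+1)D}]$; this includes the two rays, which must cross these hyperplanes for all large $i$. So control of where the rays cross is built into the event instead of being assumed. Since $\P(B_i)\ge\beta$ uniformly and the $B_i$ are $1$-dependent, the second Borel--Cantelli lemma applied to the even-indexed events gives infinitely many $B_i$ almost surely, and your uniqueness-of-finite-geodesics endgame then goes through. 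Note also that the uniform-in-$k$ lower bound, which you leave as ``we would handle this'', is precisely the substantive content of \cite[Proposition 7.10]{BM19}. Moreover, your suggested construction (very small weights on a path, large weights on a shell) is not available from continuity and super-exponential tails alone; consider e.g.\ $\rho$ uniform on $[1,2]$. This is the same reason the shell construction in Proposition~\ref{prop-bpc} needs the extra hypothesis $\operatorname{supp}\rho\supset[0,h]$. Here the bottleneck has to be built from relative weights, comparing against the typical cost per unit length.
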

As an immediate consequence of Theorem~\ref{bmthmcoalesce}, we have

\begin{cor}\label{cor-btree} 
		Given $\xi\in \partial G$, there exists a full measure subset $\Omega_\xi \subset \omegap$ such that for all  $\omega\in \Omega_\xi$,
$T(\xi,\omega) = \bigcup_{g \in G} [g,\xi)_\om$ is a tree.
\end{cor}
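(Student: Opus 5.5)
We take $\Omega_\xi$ to be the full measure set provided by Theorem~\ref{bmthmcoalesce}, intersected if needed with the full measure set on which $\omega$-geodesics between any two vertices are unique. There are countably many pairs of vertices and $\rho$ has no atoms, so this intersection still has full measure. Fix $\omega\in\Omega_\xi$. The plan is to view $T(\xi,\omega)$ as a subgraph of $\Gamma$ and to check two things: that it is connected and that it has no cycles.

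Connectedness is the easy part. Take $g_1,g_2\in G$. By Theorem~\ref{bmthmcoalesce}, the rays $[g_1,\xi)_\omega$ and $[g_2,\xi)_\omega$ coalesce, so they share a common vertex $x_0$. Following the first ray from $g_1$ to $x_0$ and then the second ray back from $x_0$ to $g_2$ gives a path inside $T(\xi,\omega)$. Every vertex of the union lies on some ray $[g,\xi)_\omega$, and in fact every vertex of $\Gamma$ lies in $T(\xi,\omega)$, since $g$ itself ranges over $G$.

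Acyclicity is the main step. The key observation is a consistency property. If $x$ is a vertex of $[g,\xi)_\omega$, then the tail of $[g,\xi)_\omega$ starting at $x$ is an $\omega$-geodesic ray from $x$: every finite subpath of it is an $\omega$-geodesic. This tail still has direction $\xi$. By the uniqueness part of Theorem~\ref{bmthmcoalesce}, the tail therefore equals $[x,\xi)_\omega$.

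Using this, we define for each vertex $x$ a ``parent'' $p(x)$, namely the vertex following $x$ on $[x,\xi)_\omega$. By consistency, every edge of $T(\xi,\omega)$ has the form $\{x,p(x)\}$ for some $x$. Indeed, an edge on $[g,\xi)_\omega$ from $x$ to its successor is the first edge of the tail starting at $x$, and that tail is $[x,\xi)_\omega$.

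Now suppose $T(\xi,\omega)$ contains a cycle $x_1,\dots,x_k,x_1$ with $k\ge 3$ and distinct vertices. Each of its $k$ edges has the form $\{y,p(y)\}$. Each vertex of the cycle has exactly one parent edge, and that edge is determined by the vertex. So, after possibly reversing orientation, the assignment must go around the cycle consistently, giving $p(x_i)=x_{i+1}$ with indices taken mod $k$. Then the ray $[x_1,\xi)_\omega$ visits $x_1$ again after $k$ steps, which contradicts the fact that a geodesic ray is a simple path.

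To make the orientation step rigorous, count: there are $k$ edges and $k$ vertices, and each edge is charged to a vertex of which it is the parent edge. Two distinct edges cannot share the same charging vertex, so the charging map is a bijection. Following the charged edges from $x_1$ then traverses the whole cycle.

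The only point that needs care is the consistency claim. It relies on the uniqueness statement in Theorem~\ref{bmthmcoalesce} holding simultaneously for all starting points $o$ on a single full measure set $\Omega_\xi$, which is exactly how that theorem is stated. So $T(\xi,\omega)$ is a connected acyclic graph, that is, a tree, and in fact a spanning tree of $\Gamma$.
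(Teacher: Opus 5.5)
Your proof is correct and takes the paper's route. The paper records the corollary as an immediate consequence of Theorem~\ref{bmthmcoalesce} without further detail, and you have supplied that detail: coalescence gives connectedness, and uniqueness of $[x,\xi)_\omega$ for every $x$ on the single set $\Omega_\xi$ makes the parent map $p$ well defined and forces each tail to be the ray from its starting vertex, which rules out cycles.
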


\begin{defn}\label{def-btree}
We shall refer to $T(\xi,\omega)$ as the \emph{random backward geodesics tree} (backward tree for short) from $\xi$ for
$\om \in \Omega_\xi$. 
\end{defn}

\subsection{Bigeodesics and effective estimates}
 In  \cite{benjamini-tessera}, the existence of bi-geodesics was established for very general edge-weight distributions for hyperbolic FPP. In \cite{BM19,BM24} under the stronger hypothesis Assumption \ref{assume-subexp}, we make some of their estimates effective and prove stronger results about structures of bigeodesics.  

\begin{prop}[{\cite[Proposition 2.1]{BM24}}]
\label{prop-effbt} Fix $1 \in \Gamma$, and $C > 0$. Then 
	for a.e.\ $\om \in \omegap$, there exists
	$R_\omega>0$ such that the following holds:\\ 
	For every sequence $x_n, y_n \to \infty$ such that $[x_n, y_n]$ passes through 
	$N_C (o)$, the $\om-$geodesic $[x_n, y_n]_\omega$ passes
	through the $R_\omega-$neighborhood of $o$.\\
 Further, there exists $c>0, N_0$ such that $$\P[R(\om)\geq N] \leq \exp(-cN)$$ for $N \geq N_0$.
\end{prop}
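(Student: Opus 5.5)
The idea is to show that an $\omega$-geodesic which stays far from $o$ has to be much longer, in word length, than the word geodesic it replaces. Since edge weights cannot be too small along long paths (a large deviation bound), such a path is then too expensive to be optimal unless some edges are atypically cheap, and that is exponentially unlikely. A union bound over a family of candidate paths, indexed by how far they stay from $o$, then gives the exponential tail for $R_\omega$, and Borel--Cantelli gives finiteness almost surely.

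\emph{Deterministic input.} Let $\delta$ be the hyperbolicity constant and suppose $[x,y]$ passes through $N_C(o)$. Then any path from $x$ to $y$ avoiding $N_R(o)$ has word length at least $d(x,y)+c_0 e^{c_1 R}$; this is the standard exponential divergence of geodesics in hyperbolic spaces. A cruder bound, that the length is at least $d(x,y)+2R-O(C+\delta)$, is also available and may suffice. We may replace $x_n,y_n$ by the points $x',y'$ on $[x_n,y_n]$ at word distance about $R$ from $o$, or more carefully at distance $KR$ with a large constant $K$. The reason is that if $[x_n,y_n]_\omega$ misses $N_R(o)$, the argument below localizes: the $\omega$-geodesic must detour around a ball of radius $R$ at bounded cost relative to the segment between $x'$ and $y'$, up to Morse-type corrections.

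\emph{Probabilistic input.} Let $\mu=\E X_e$. By Assumption~\ref{assume-subexp}, a fixed path of length $L$ has $\omega$-length at most $\mu L + t$ except with probability $e^{-ct}$ for $t\gtrsim L$. Also, for small $\epsilon>0$ there is $a>0$ such that every self-avoiding path of length $L$ starting in a ball of radius $R'$ has $\omega$-length at least $aL$, except with probability $|B_{R'}|\,e^{-c'L}$. This follows from a union bound over paths, since there are at most $D^L$ of them with $D$ the degree, and the Chernoff bound $\P(\sum_{i\le L} X_i\le aL)\le (\E e^{-\lambda X})^L e^{\lambda aL}$ can be made smaller than $(2D)^{-L}$ by choosing $\lambda$ large and then $a$ small, using that $\rho$ has no atom at $0$.

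\emph{Comparison.} Suppose the $\omega$-geodesic between $x$ and $y$ avoids $N_R(o)$. Take $x',y'$ on $[x,y]$ at distance $KR$ from $o$. The $\omega$-geodesic passes within a controlled distance of $x'$ and $y'$; otherwise we recurse to a larger scale, which gives a summable union bound. So it contains a subpath between points near $x'$ and $y'$ of word length at least $e^{c_1R}$, or at least $c''KR$ with a large factor after the detour, and hence of $\omega$-length at least $a e^{c_1R}$. The word geodesic $[x',y']$ has length about $2KR$ and $\omega$-length at most $3\mu KR$ with high probability. Since $a e^{c_1 R}\gg 3\mu KR$, this contradicts minimality outside an event of probability $e^{-cR}$. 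Taking a union bound over the polynomially many (in $R$) choices of endpoint neighborhoods gives $\P[R_\omega\ge N]\le e^{-cN}$.

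\emph{Main obstacle.} I expect the localization step to be the hardest: converting ``avoids $N_R(o)$'' into a single bounded-scale bad event, uniformly over all $x_n,y_n\to\infty$. Here I would first try the hyperplane/Morse-lemma argument of \cite{BM19}. It shows that with high probability $\omega$-geodesics are quasigeodesics at scale $R$, with constants from the lower bound $aL$ and the local upper bound. A quasigeodesic with endpoints far out on either side of $o$ must, by the Morse Lemma, pass within $O(1)$ times the quasi-isometry constants of $o$. That gives $R_\omega$ bounded by a variable with exponential tail.
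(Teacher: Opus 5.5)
The paper only quotes this result from \cite{BM24}, so I am judging your outline on its own. The ingredients are right: exponential divergence, your lower-deviation bound for long self-avoiding paths (which is correct), a local upper bound, and a union bound. But there is a genuine gap exactly at the step you flag, \emph{localization}, and that step carries all the content.

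\textbf{Why localization is unavoidable.} Because $d(x_n,y_n)\to\infty$, no global comparison can work. The multiplicative slack between $a$ and any upper constant overwhelms an additive excess $c_0e^{c_1R}$ (let alone $2R$) once $d(x,y)\gg e^{c_1R}$. For the same reason, the additive bound $d(x,y)+2R$ cannot in general replace exponential divergence even after localizing. So you need a subpath of $\sigma=[x_n,y_n]_\omega$ whose endpoints lie within $O(R)$ of $o$ and which makes an exponentially long detour.

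\textbf{Your two mechanisms do not supply it.} You assert that $\sigma$ passes within a controlled distance of $x',y'$, ``otherwise we recurse to a larger scale, which gives a summable union bound''. Summability is precisely the issue. Each recursion moves the centre away from $o$, and there are $\asymp\lambda^D$ candidate centres at distance $D$ (Theorem~\ref{thm-coorn}). So the ratio displacement/scale must stay bounded, and you give no such bookkeeping. The Morse-lemma fallback is circular. ``$\omega$-geodesics are quasigeodesics at scale $R$ with high probability'' can only be obtained, with small failure probability, inside a ball of radius $O(R)$ about $o$; there is no union bound over all of $\Gamma$. You do not know that $\sigma$ enters that ball, which is what is to be proved.

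\textbf{The entropy count is also wrong.} $R_\omega$ must serve \emph{all} pairs whose word geodesic meets $N_C(o)$. So $x',y'$ range over spheres of radius $KR$, which is exponentially many choices, not polynomially many. A competitor bound with a fixed constant ($\omega$-length $\le 3\mu KR$) need not survive that union bound. You must use Assumption~\ref{assume-subexp} to make the upper constant huge: for example, require all edges in a ball of radius $O(R)$ to have weight $\le R$, which fails with probability $\lambda^{O(R)}e^{-tR}$ for any $t$. The exponential detour then absorbs the resulting $O(R^2)$ competitor cost.

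\textbf{One way to supply the missing step.}
\begin{enumerate}
\item Put $\gamma=[x,y]$, pick $o'\in\gamma\cap N_C(o)$, and let $f(z)=d(z,\sigma)$ for $z\in\gamma$. Then $f$ is $1$-Lipschitz along $\gamma$, $f(x)=f(y)=0$, and $f(o')\ge N-C$ if $\sigma$ misses $N_N(o)$.
\item Fix $1<A<K$ and start at $z_0=o'$. Given $z_j$ with $r_j=f(z_j)$, let $z_j^\pm\in\gamma$ be the points at distance $Kr_j$ from $z_j$ on either side (or $x$, $y$ if these are closer). If $f(z_j^\pm)\le Ar_j$ on both sides, stop. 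Otherwise let $z_{j+1}$ be a violating $z_j^\pm$, so $r_{j+1}>Ar_j$.
\item This terminates at some $(z,r)$ with $r\ge N-C$ and $d(z,o)\le Kr/(A-1)+C$.
\item Choose $u,v\in\sigma$ within $Ar$ of $z^-,z^+$ (with $u=x$ or $v=y$ at an endpoint). Thin triangles give $d(z,[u,v])\le 2\delta$ for $r$ large. Also $\sigma$ misses $\{w: d(w,z)<r\}$.
\item By Bridson--Haefliger, Prop.~III.H.1.6, the subpath of $\sigma$ between $u$ and $v$ has word length at least $2^{(r-2\delta-1)/\delta}$. On your lower-deviation event for self-avoiding paths starting in $B(z,(K+A)r)$, its $\omega$-length is therefore at least $a\,2^{(r-2\delta-1)/\delta}$.
\item Meanwhile $[u,v]\subset B(z,2(K+A)r)$ has $\omega$-length at most $2(K+A)r^2$ if every edge there has weight $\le r$. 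This contradicts geodesicity for $r$ large.
\item So the bad event at level $N$ lies in the union, over $r\ge N-C$ and centres $z$ with $d(z,o)\le Kr/(A-1)+C$, of the failures of these two local events. Its probability is at most $\sum_{r\ge N-C}C'\lambda^{(2(K+A)+K/(A-1))r}e^{-tr}\,\E e^{tX_e}$ plus a doubly exponentially small term.
\item Taking $t$ large (all exponential moments are finite) gives $\P[R_\omega\ge N]\le e^{-cN}$ simultaneously for all pairs. Then $R_\omega<\infty$ almost surely follows directly.
\end{enumerate}
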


{Notice that Proposition \ref{prop-effbt} shows that if $o\in [x,y]$, then it is unlikely that $[x,y]_{\omega}$ does not pass through a point near $o$. The complementary result which states that if the $o$ is far away from $[x,y]$ then $[x,y]_{\omega}$ is unlikely to pass close to $o$ is also true. 
\begin{prop}[{\cite[Lemma 2.3]{BM24}}]
    \label{lem-effbt}
    Let $C>0, o \in \Gamma$ be as in Proposition~\ref{prop-effbt}. Suppose Assumption~\ref{assume-subexp} holds. Then, given $c>0$, there exists $R_0=R_0(c,C) >0$ such that for $R \geq R_0$ the following holds.\\
Let $A=A(o,C)$ denote the event that there exist $u, v, w \in \Gamma$ such that
	\begin{enumerate}
	\item $w \in [u,v]_\omega$.
	\item $\Pi_{uv} (w) \in N_{C}(o)$, where $\Pi_{uv}$ denotes nearest-point-projection onto $[u,v]$.
	\item $d(w,o) \geq R$.
	\end{enumerate}
	Then $\P(A) \leq e^{-cR}$. 
\end{prop}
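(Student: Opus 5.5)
We aim to bound $\P(A)$ by a union bound over the location of $w$, reducing to the event that an $\omega$-geodesic makes a long excursion far from its word-geodesic shadow. Fix $u,v,w$ as in the definition of $A$. Set $p=\Pi_{uv}(w)\in N_C(o)$ and $r=d(w,o)\ge R$, so that $d(w,p)\ge r-C$. By hyperbolicity, the point $w$ lies at distance roughly $d(w,[u,v])\approx d(w,p)$ from $[u,v]$, up to an additive $O(\delta)$. The subpath $[u,v]_\omega$ passes through $w$, so it contains a subpath $\sigma\subset[u,v]_\omega$ through $w$ which lies outside the $r/2$-neighbourhood of $[u,v]$. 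The endpoints $a,b$ of $\sigma$ lie within about $r/2$ of $[u,v]$ and are themselves $\omega$-geodesically joined by $\sigma$.

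Next we compare word lengths with passage times. A standard hyperbolic estimate says that a path from $a$ to $b$ staying outside $N_{r/2}([u,v])$ and passing at distance $\ge r$ from it has word length at least $c_1 r+ d(a,b)$; indeed it is exponential in $r$ relative to $d(a,b)$, but linear suffices. Thus $\ell_\omega(\sigma)\le T(a,b)\le \ell_\omega([a,b])$ while $|\sigma|\ge d(a,b)+c_1 r$. We use two large-deviation inputs, both relying on Assumption~\ref{assume-subexp}:
\begin{enumerate}
\item an upper tail for a fixed word geodesic, $\P(\ell_\omega([a,b])\ge K d(a,b)+s)\le e^{-\lambda s}$ for all $\lambda$ (taking $K$ large);
\item a lower tail for all lattice paths of length $L$ from a fixed start, $\P(\exists \gamma,\ |\gamma|=L,\ \ell_\omega(\gamma)\le \epsilon L)\le e^{-c L}$ for $\epsilon$ small (a counting/Chernoff bound, since $\rho$ has no atom at $0$, so $\P(X_e\le \epsilon')$ is small compared to $1/|S|$).
\end{enumerate}
These only give $|\sigma|\lesssim K d(a,b)$, which is not yet contradictory. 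So instead we use the geometry more sharply: with $\sigma$ outside $N_{r/2}([u,v])$, its length is at least $e^{c r}$ unless $d(a,b)\gtrsim r$. This is where Proposition~\ref{prop-effbt} enters. Apply it at the projection point $p\in N_C(o)$: on the complement of an event of probability $e^{-cR}$, $[u,v]_\omega$ passes within $R/4$ of $o$, uniformly over all $u,v$ with $[u,v]\cap N_C(o)\neq\emptyset$. The point $w$ is then at distance $\ge 3R/4$ from this crossing point $q$. The ray segment of $[u,v]_\omega$ between $q$ and $w$ is a geodesic from a point near $o$ to a point $w$ whose projection to $[u,v]$ is near $o$. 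It therefore travels distance $\ge R/2$ essentially ``perpendicular'' to $[u,v]$ and must come back, since $v$ projects far away. The return trip makes $\sigma$ a path from near $o$ back to near $o$ (or to a point near $[u,v]$ at distance $\lesssim\delta\log$ scale) of length $\ge R$, while its endpoints are close.

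To finish, take the subpath $\tau$ from $q$ to the first point $q'$ after $w$ that comes back within $R/4$ of $[u,v]$. Hyperbolicity forces $d(q,q')\le d(q,w)+d(w,q')-2(r-O(R/4))$, i.e.\ the Gromov product at $w$ is large. We then union bound over $q$ and $q'$ in a ball of radius $O(R)$ around $o$ — a harmless exponential count, which we handle by choosing $\lambda$ in (1) larger than the ball growth rate — and over $L=|\tau|\ge R$. The event is then contained in: there is a path of length $L$ from $q$ with $\omega$-length $\le \ell_\omega([q,q'])$, where $d(q,q')\le L-cR$. Using (1) and (2), with $K$ fixed and $\epsilon<1/K$, this has probability $\le e^{-c'L}$ once we also handle the case $d(q,q')$ comparable to $L$ through the detour defect $cR$. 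The main obstacle is precisely this comparison when $d(q,q')\approx L$: a pure first-moment bound fails since $\omega$-length and word length differ by the factor $K$. There I would instead invoke the exponential divergence of geodesics in hyperbolic space, which gives $L\ge e^{c r}$ relative to $d(q,q')\lesssim R$. The counting bound (2) then dominates the cost $Kd(q,q')$, giving $\P(A)\le e^{-cR}$ after adjusting constants and choosing $R_0$ large.
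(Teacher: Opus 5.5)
Your proposal has a genuine gap. It sits in the step you flag as the main obstacle, but in a different configuration from the one you diagnose, and the tool you invoke does not reach it. Consider the \emph{finger} case. Here $[u,v]_\omega$ leaves $N_{R/4}([u,v])$ at $q$ near $o$, runs out to $w$ with $d(w,o)\approx R$, and re-enters at $q'$ whose projection to $[u,v]$ is also near $o$. Nothing forces $d(q,q')$ to be small: it can be of order $R/2$, while $|\tau|$ can be as small as a bounded multiple of $R$. So the detour ratio $|\tau|/d(q,q')$ is bounded by an absolute constant.

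Your comparison through (1) and (2) needs $\epsilon|\tau|>K\,d(q,q')$, i.e.\ a ratio exceeding $K/\epsilon$, and this ratio must be huge because $c$ is arbitrary. Exponential divergence gives nothing here either. It only charges length for progress of the projection onto $[u,v]$ while the path stays far from $[u,v]$, and in the finger the projections of $q,q'$ essentially coincide. So two of your assertions are false: that $\sigma$ has length $\ge e^{cr}$ unless $d(a,b)\gtrsim r$, and that $L\ge e^{cr}$ when $d(q,q')\lesssim R$. The regime $d(q,q')\approx L$ that you worry about forces $L\gg R$, i.e.\ a sliding excursion whose projection moves far along $[u,v]$, and that is the easy case.

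Separately, your union bound over $q'\in B_{O(R)}(o)$ is unjustified. The excursion can run far parallel to $[u,v]$ before returning, and $d(w,o)$ is unbounded, so you must sum over $L$ and $q'\in B_L(q)$.

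The fix is to take the neighbourhood scale to be a small fraction $\eta=\eta(c)$ of $R$, rather than $r/2$ or $R/4$.
\begin{itemize}
\item[1.] Let $\sigma\ni w$ be the maximal subpath of $[u,v]_\omega$ outside $N_{\eta R}([u,v])$, with endpoints $a,b$ and projections $a',b'$. Since $d(w,[u,v])\ge R-C$, you get $|\sigma|\ge 2(1-\eta)R-2C$.
\item[2.] If $d(a',b')\le\eta R$, then $d(a,b)\le 3\eta R$.
\item[3.] If $d(a',b')>\eta R$, use strong contraction of nearest-point projection onto geodesics (balls disjoint from $[u,v]$ project to sets of diameter $O(\delta)$), or exponential divergence. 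This gives $|\sigma|\gtrsim \eta R\, d(a',b')\gtrsim \eta R\, d(a,b)$.
\item[4.] In both cases $|\sigma|\ge M\,d(a,b)$, with $M$ as large as you like once $\eta$ is small and $R$ is large.
\item[5.] Since projection is coarsely $1$-Lipschitz, $d(a,o)\le 3|\sigma|$. So you may union bound over $\ell=|\sigma|\ge R$, $a\in B_{3\ell}(o)$ and $b\in B_{\ell/M}(a)$.
\item[6.] For each such triple, the bad event is that either some path of length $\ell$ from $a$ has $\omega$-length $\le\epsilon\ell$, or the fixed word geodesic $[a,b]$ has $\omega$-length $\ge\epsilon\ell$.
\item[7.] Choose $\epsilon$ via your (2), then the exponential moment in (1) using Assumption~\ref{assume-subexp}, then $M$, then $\eta$, then $R_0$. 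Each probability then beats the exponential volume growth of $\Gamma$ by a margin $e^{-c\ell}$, and the sum is at most $e^{-cR}$.
\end{itemize}
Proposition~\ref{prop-effbt} is not needed for this argument.
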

}

Using Proposition \ref{prop-effbt}, we showed that, almost surely there exist random bigeodesics $(\xi,\xi')_{\omega}$ for all $\xi\ne \xi' \in \pG$. As a consequence we get the following information about the ends of the backward tree $T(\xi,\omega)$.

	\begin{defn}\label{def-btreefullend}
	A backward tree $T(\xi,\omega)$  has 
	{\bf complete ends} if for all $\xi'\neq \xi$, there exists $p \in [1,\xi)_\om$ such that $(\xi',p]_\om \subset T(\xi,\omega)$.
\end{defn}
  The next result states that $T(\xi,\omega)$ contains  a union of bi-infinite geodesics $\{(\xi',\xi)_\omega\}$, where $\xi'$ ranges over 
$\partial G \setminus \{\xi\}$. 

\begin{theorem}\label{thm-btree} \cite{BM24}
	For all $\xi \in \partial G$,
	there exists a full measure subset $\Omega_\xi\subset \Omega$  such that  for all $\om \in \Omega_\xi$,  the backward tree
	$T(\xi,\omega)$ has complete ends.
\end{theorem}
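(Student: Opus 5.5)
Fix $\xi$ and work on the full-measure set $\Omega_\xi$ of Theorem~\ref{bmthmcoalesce}, intersected with the full-measure set on which Proposition~\ref{prop-effbt} holds for every base point $o\in\Gamma$; this is possible since $\Gamma$ is countable. Fix $\omega$ in this set and $\xi'\neq\xi$. The plan is to build a bi-infinite $\omega$-geodesic from $\xi'$ to $\xi$ inside $T(\xi,\omega)$ as a limit of the rays $[x_n,\xi)_\omega$, where $x_n\to\xi'$ along a word geodesic $(\xi',\xi)$. Since every such ray lies in $T(\xi,\omega)$ and any two of them coalesce, it suffices to show two things: the rays $[x_n,\xi)_\omega$ all pass through a fixed finite ball, and their limit has backward direction $\xi'$. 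The limit is then a bi-infinite $\omega$-geodesic contained in the tree.

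\textbf{Step 1 (localisation).} Pick $o\in(\xi',\xi)$ and points $y_m\to\xi$ on $(\xi',\xi)$. The word geodesic $[x_n,y_m]$ passes through $N_C(o)$ for all $n,m$ large, where $C$ depends only on $\delta$. By Proposition~\ref{prop-effbt}, $[x_n,y_m]_\omega$ meets $N_{R_\omega}(o)$. Theorem~\ref{bmdirexists}, combined with the uniqueness in Theorem~\ref{bmthmcoalesce}, shows that $[x_n,y_m]_\omega\to[x_n,\xi)_\omega$ as $m\to\infty$ along a subsequence, hence along the full sequence. Local finiteness then gives that $[x_n,\xi)_\omega$ meets $N_{R_\omega}(o)$ for every large $n$.

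\textbf{Step 2 (compactness).} Let $p_n\in[x_n,\xi)_\omega\cap N_{R_\omega}(o)$. Passing to a subsequence, $p_n=p$ is constant. The segments $[x_n,p]_\omega$ are $\omega$-geodesics ending at $p$. By a diagonal argument using local finiteness, they converge on compact sets to a ray $\sigma$ ending at $p$ whose finite subpaths are $\omega$-geodesics. Each finite piece of $\sigma$ is contained in some $[x_n,\xi)_\omega$, and all these rays pass through $p$. By uniqueness of $[p,\xi)_\omega$, the union $\sigma\cup[p,\xi)_\omega$ is a bi-infinite $\omega$-geodesic contained in $T(\xi,\omega)$.

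\textbf{Step 3 (direction).} I expect this to be the main obstacle. I must show that the backward end of $\sigma$ converges to $\xi'$, not to some other boundary point. For this, apply Proposition~\ref{prop-effbt} at every base point $o_k\in(\xi',\xi)$ with $o_k\to\xi'$. For each fixed $k$, and $n$ large, $[x_n,\xi)_\omega$ passes within $R_\omega(o_k)$ of $o_k$. The random radii are not uniform in $k$, but the tail bound $\P[R\ge N]\le e^{-cN}$ and Borel--Cantelli give $R_\omega(o_k)\le \epsilon k$ for all large $k$, almost surely. Hence, in the limit, $\sigma$ passes within $\epsilon d(o,o_k)$ of $o_k$ for infinitely many $k$, and hyperbolicity forces these points of $\sigma$ to converge to $\xi'$. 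Moreover, $\sigma$ has a unique accumulation point: applying Theorem~\ref{bmdirexists} to the reversed ray, with a base point of $\sigma$, shows that its direction exists, and that direction must then be $\xi'$.

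Finally, the null sets involved depend only on $\xi$ and countably many base points, not on $\xi'$. So one set $\Omega_\xi$ works simultaneously for all $\xi'\neq\xi$.
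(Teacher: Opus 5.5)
Your skeleton is the route the paper indicates: localise $[x_n,\xi)_\omega$ near $o\in(\xi',\xi)$ via Proposition~\ref{prop-effbt}, extract a limit through a fixed vertex $p$, then use uniqueness of $[p,\xi)_\omega$ to place the bi-infinite geodesic inside $T(\xi,\omega)$. Coalescence with $[1,\xi)_\omega$ then supplies the point on $[1,\xi)_\omega$ that the definition requires.

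\textbf{The gap is in Step 3,} exactly where the theorem has content. The quantifier ``for all $\xi'\neq\xi$'' sits \emph{inside} the almost-sure set. Your Borel--Cantelli argument runs along $(o_k)\subset(\xi',\xi)$, a sequence that depends on $\xi'$, so it only yields a null set $N_{\xi'}$. A union of uncountably many such sets need not be null. Your closing claim that only countably many base points are involved is true for the events $\{R_\omega(o)<\infty\}$. It is false for the tail event $\{R_\omega(o_k)\le\epsilon k \text{ for all large } k\}$, which concerns an entire ray.

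To make your version uniform you would need Borel--Cantelli over all of $\Gamma$:
$\sum_{o\in\Gamma}\P[R_\omega(o)\ge\epsilon\, d(1,o)]\lesssim\sum_n\lambda^n e^{-c\epsilon n}<\infty$.
By Theorem~\ref{thm-coorn} this forces $c>\log\lambda/\epsilon$. The fixed rate $c$ of Proposition~\ref{prop-effbt} does not give that, so you would also need the rate to be arbitrarily large.

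A milder version of the same issue appears in Step 1. The null set in Theorem~\ref{bmdirexists} depends on the sequence $y_m\to\xi$, so take $y_m$ on a fixed ray $[1,\xi)$ rather than on $(\xi',\xi)$. Also, only subsequential convergence is justified there, but that is all you use.

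\textbf{A fix using only countably many almost-sure events} goes through Proposition~\ref{lem-effbt}. Almost surely, for every vertex $o'$ the excursion radius
$R'(o')=\sup\{d(w,o'): w\in[u,v]_\omega,\ \Pi_{uv}(w)\in N_C(o')\}$
is finite. Let $\eta$ be the direction of the reversed ray $\sigma$, which exists, as you note.
\begin{enumerate}
\item If $\eta=\xi$, then $\sigma$ and $[p,\xi)_\omega$ are distinct $\omega$-geodesic rays from $p$ in direction $\xi$. This contradicts the uniqueness in Theorem~\ref{bmthmcoalesce}.
\item If $\eta\notin\{\xi,\xi'\}$, each $w\in\sigma$ lies on $[x_n,y_m]_\omega$ for arbitrarily large $n,m$. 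For $w$ close to $\eta$, its projection onto $[x_n,y_m]$ lies within $C=C(\delta)$ of a fixed vertex $o'$ near the projection of $\eta$ to $(\xi',\xi)$. Yet $d(w,o')\to\infty$ as $w\to\eta$ along $\sigma$, contradicting $R'(o')<\infty$.
\end{enumerate}
Hence $\eta=\xi'$, on a full-measure set that genuinely does not depend on $\xi'$.
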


\subsection{Random Cannon-Thurston maps and exceptional directions}\label{sec-btstr}

\begin{defn}\label{def-fwdtree}
	Consider FPP on $\Gamma$ with passage times satisfying that $\rho$ has no atoms. For $\P$-a.e. $\omega\in \Omega$, the union of random geodesics $$F(1,\omega):=\bigcup_{g \in G} [1,g]_\omega $$ is a tree, which we refer to as the  \emph{forward random geodesic tree}, or simply the forward tree,  of the FPP on $\Gamma$ with base point $1$. Forward random tree with base point $g\in G$ is defined similarly.  
\end{defn}

We now introduce a key tool from geometric group theory, the Cannon-Thurston map in the random context. This notion was introduced into the study of (Gromov) hyperbolic groups in \cite{mitra-ct,mitra-trees}
based on    the original work of Cannon and Thurston in \cite{CT,CTpub}.
See \cite{mahan-icm} for further background and a survey of results.

\begin{defn}\label{def-ct}
	Let $(X, d_X)$ and  $(Y, d_Y)$  be  hyperbolic metric spaces and let $i: (Y,y_0) \to (X,x_0)$ denote a proper embedding. Let $\partial X, \, \partial Y$ be their (Gromov) boundaries.
	Also, let $\hhat X$ and $\hhat Y$ denote their Gromov compactifications.
	We say that the triple $(X,Y,i)$ admits a Cannon-Thurston map, if
	$i$ extends continuously to 
	$\hhat i: \hhat Y \to \hhat X$. The continuous extension, if it exists, is referred to as a \emph{Cannon-Thurston map} for 
	$i: Y \to X$.
\end{defn}

The following theorem asserts the existence of random Cannon-Thurston maps for both forward and backward trees.
\begin{theorem}\label{thm-randomctexists}\cite[Section 3.1]{BM24} We fix the base-point $1$.
	There exists a full measure subset $\Omega_0 \subset \Omega$ such that
	for all  $\omega \in \Omega_0$, the natural embedding 
	$i_\omega^F: F(1,\omega) \to \Gamma$ admits a Cannon-Thurston map. Further, given $\xi \in \pG$, there exists a full measure set $\Omega_{\xi} \subset \Omega$  such that	for all  $\omega \in \Omega_{\xi}$, the natural embedding $i_\omega^T: T(\xi,\omega) \to \Gamma$ admits a Cannon-Thurston map. 
\end{theorem}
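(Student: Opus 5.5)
We plan to deduce the existence of Cannon-Thurston maps from the effective estimates in Proposition~\ref{prop-effbt} and Proposition~\ref{lem-effbt}, using the standard criterion from \cite{mitra-ct}. The criterion says the following. Let $Y$ be a tree with base point $y_0$, properly embedded in a hyperbolic space $X$. The embedding admits a Cannon-Thurston map provided there is a proper function $M:\natls\to\natls$ such that whenever a geodesic segment $\lambda$ in $Y$ lies outside the $N$-ball about $y_0$ in $Y$, the $X$-geodesic joining the endpoints of $\lambda$ lies outside the $M(N)$-ball about $x_0$ in $X$.

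For the forward tree $F(1,\omega)$ the argument runs as follows. A segment in $F(1,\omega)$ with endpoints $a,b$ is contained in $[1,a]_\omega\cup[1,b]_\omega$, and it is itself an $\omega$-geodesic $[p,a]_\omega\cup[p,b]_\omega$ through the branch point $p$. Suppose the word geodesic $[a,b]$ passes within distance $m$ of $1$. We apply Proposition~\ref{lem-effbt} at base points $o$ with $d(o,1)\le m$. Taking a union bound over the ball of radius $m$, summing over $m$, and using Borel-Cantelli, we get for a.e.\ $\omega$ an $m_0(\omega)$ such that for all $m\ge m_0$ the following holds: any $\omega$-geodesic $[u,v]_\omega$ with $[u,v]$ meeting $N_C(o)$, $d(o,1)\le m$, passes within distance $Km$ of $o$. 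The key point is that the summable bound $e^{-cR}$ beats the exponential volume growth once $c$ is chosen larger than the growth rate. We combine this with Proposition~\ref{prop-effbt}, whose quasi-projection point lies on $[a,b]$.

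So if $[a,b]$ comes $m$-close to $1$, then the tree segment (an $\omega$-geodesic) comes $O(m)$-close to $1$ in the word metric. Since the tree is properly embedded, this also bounds its tree distance from $y_0$. Taking the contrapositive gives a proper $M$.

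For the backward tree $T(\xi,\omega)$, segments in the tree are again unions of two $\omega$-geodesic pieces $[a,p]_\omega\cup[b,p]_\omega$ meeting at the coalescence point $p$, and the concatenation is an $\omega$-geodesic. Hence the same estimate applies verbatim. One additional ingredient is needed: properness of the embedding $T(\xi,\omega)\to\Gamma$, i.e.\ only finitely many tree points in each word ball. This holds because each vertex is a single point of $\Gamma$ and the tree uses the induced path metric on a subgraph.

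The main obstacle is making the union bound uniform over all pairs $(u,v)$, which are infinitely many. Proposition~\ref{lem-effbt} is already stated as a bound on the event over all $u,v,w$, so only the sum over base points $o$ is needed. We must still check that the exponent $c$ can be chosen larger than the volume entropy of $\Gamma$, which the statement allows since $c$ is arbitrary. We also must handle both the case where $[a,b]$ passes near $1$ and the case where it does not, the latter being handled directly by Proposition~\ref{lem-effbt}.
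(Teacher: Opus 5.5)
Your framework is the right one: Mitra's criterion, properness of the inclusion of a connected subgraph of a locally finite graph, and an effective Benjamini--Tessera estimate. The step that connects a tree segment to that estimate, however, is false. You assert that the geodesic of $F(1,\omega)$ or $T(\xi,\omega)$ from $a$ to $b$ is itself an $\omega$-geodesic, so that the estimate applies ``verbatim''. That geodesic is $[a,p]_\omega\cup[p,b]_\omega$, with $p$ the branch point (resp.\ the coalescence point of $[a,\xi)_\omega$ and $[b,\xi)_\omega$). By a.s.\ uniqueness of $\omega$-geodesics, the concatenation is an $\omega$-geodesic only if it equals $[a,b]_\omega$, i.e.\ only if $p\in[a,b]_\omega$. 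In general this fails: the tree path climbs from $a$ to $p$ (towards $1$, resp.\ towards $\xi$) and comes back down, while $[a,b]_\omega$ can cut across. For instance, if $a,b$ are adjacent with $a\notin[1,b]_\omega$ and $b\notin[1,a]_\omega$, then the edge $ab$ is not in $F(1,\omega)$, so the tree path avoids it even when that edge is $[a,b]_\omega$. Applied to the pair $(a,b)$, Proposition~\ref{prop-effbt} only controls $[a,b]_\omega$, which need not lie in the tree at all. It therefore says nothing about the tree segment.

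What is missing is a thin-triangle reduction to the two pieces. Each piece \emph{is} an $\omega$-geodesic, being a subpath of $[1,a]_\omega$ (resp.\ of $[a,\xi)_\omega$). For word geodesics, $[a,b]\subset N_\delta([a,p]\cup[p,b])$. So if $[a,b]$ meets $B_m(1)$, then one of $[a,p]$, $[p,b]$ passes through a point $o$ with $d(o,1)\le m+\delta$. Proposition~\ref{prop-effbt} at $o$ then puts the corresponding piece of the tree segment within $R_\omega(o)$ of $o$, and properness of the inclusion produces Mitra's function $M$. Once this is in place, your Borel--Cantelli step and the comparison of $c$ with the volume entropy are superfluous. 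Mitra's criterion needs no rate, and Proposition~\ref{prop-effbt} is already uniform over all pairs for a fixed $o$. Moreover, $R_\omega(o)<\infty$ holds a.s.\ simultaneously for the countably many $o\in G$, so finiteness of $\max_{o\in B_{m+\delta}(1)}R_\omega(o)$ for each $m$ is all you need. Likewise, Proposition~\ref{lem-effbt} and the case where $[a,b]$ stays far from $1$ play no role, since only the contrapositive direction of the criterion is used.
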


We now define exceptional direction for the forward and backward trees.

 \begin{defn}\label{def-exceptionaldir} Let $G$ be hyperbolic.
	For $\omega \in \Omega$, we say that $z \in \pG$ is an exceptional direction for the  forward random tree  $F(1,\omega)$ if there exist {\emph{distinct}} geodesics
	$[1,z)^1_\omega$ and $[1,z)^2_\omega$ contained in 
	$F(1,\omega)$ such that $z\in \pG$ is the unique accumulation point for both $[1,z)^1_\omega$ and $[1,z)^2_\omega$.
\end{defn}

\begin{defn}\label{def-except}
	For the backward random tree
	$T(\xi,\omega)$, we say that 
	$z\in \pG$ is an \emph{exceptional direction} if $T(\xi,\omega)$ contains {\emph{disjoint}} $\omega-$geodesic rays $[a,z)_\omega$ and 
	$[b,z)_\omega$, both converging to $z \in \pG$. 
\end{defn}

{Note that in Definition~\ref{def-exceptionaldir}, we require forward geodesics to be 
only distinct. Thus, they are allowed to overlap for an initial geodesic segment. On the other hand,  in Definition~\ref{def-except}, backward geodesics are required to have disjoint tails, so that, in particular, the starting points $a, b$
are distinct. }

Theorem~\ref{thm-randomctexists} provides us with a topological tool for investigating exceptional directions in the sense of Definitions \ref{def-exceptionaldir} and ~\ref{def-except}. We provide some brief motivation here.
We know that given $\xi \in \pG$, for almost every  $\om \in \Omega$, 
all $\omega-$geodesics in the direction of $\xi$ coalesce. We refer to this as typical or non-exceptional behavior. Note however, that the full measure subset of $\Omega$ \emph{depends on $\xi$}. On the other hand, this typical behavior does not tell us anything about the following question that arises naturally when we switch the quantification.

\begin{qn}\label{qn-except}
Given $\om \in \Omega$,  are there exceptional directions? What is their cardinality?
\end{qn}

By definition,  exceptional directions are necessarily random, i.e.\ they depend on $\omega$. From a geometric point of view, typical behavior is `local' (where $\xi$ is fixed), whereas  exceptional behavior is global, in the sense that all points of $\pG$ need to be considered. We next define the multiplicity of exceptional directions. 

\begin{defn}\label{def-mult}
Let $\{[a_i, \xi')_\omega: i \in I\}$ be a maximal collection of disjoint geodesic rays (either in $F(1,\omega)$ or in $T(\xi,\omega)$) all converging to $\xi'$. Then the cardinality of $I$ is called the multiplicity of $\xi'$. (Here, the collection is maximal in terms of its cardinality.)
\end{defn}

The following theorem summarizes the conclusions of \cite{BM24} about exceptional directions.

\begin{theorem}\label{thm-omni-BM24} The following hold for both forward and backward random trees for FPP on a hyperbolic group. 

	\begin{enumerate}
		\item If $G$ is not free (up to finite index), almost surely exceptional directions in $\pG$ exist and are dense in $\pG$.
		\item If $\pG$ has topological dimension $\dim_t \pG$ greater than 1, then almost surely there are 
		\emph{uncountably many} exceptional directions.
		\item If $G$ is a cocompact lattice in the hyperbolic plane, and the Cayley graph $\Gamma$ is planar, then  almost surely there are \emph{countably many} exceptional directions. 
		\item Let $G$ be a  hyperbolic group such that $\dim_t \pG = n-1$. Then
		there exists an exceptional direction $z \in \pG$ with multiplicity \emph{at least $n$}. Thus there exist hyperbolic groups with arbitrarily large
		multiplicity of an exceptional direction.
		\item Exceptional directions have measure zero with respect to the Patterson-Sullivan measure. In fact the set of exceptional directions has Hausdorff dimension (almost surely uniformly) smaller than that of $\pG$.
		\item For any (fixed) $\Gamma$, there exists an almost sure uniform upper bound 
		on the  multiplicity of an exceptional direction.
	\end{enumerate} 
\end{theorem}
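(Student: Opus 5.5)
This theorem is a summary of results already proved in \cite{BM24}, so the plan is to recall, item by item, the mechanism behind each conclusion. Throughout, we use the random Cannon-Thurston maps of Theorem~\ref{thm-randomctexists} as the organizing tool. Let $\partial T$ denote the space of ends of a tree $T$.

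For the backward tree, Theorem~\ref{thm-btree} says every $\xi'\neq\xi$ is the endpoint of some ray in $T(\xi,\omega)$. So the Cannon-Thurston map $\partial T(\xi,\omega)\to\pG$ is continuous and surjective. Exceptional directions are then exactly the points of $\pG$ with at least two preimages. The forward tree $F(1,\omega)$ is handled in the same way, using Theorem~\ref{bmdirexists} to get surjectivity.

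Items (1), (2) and (4) are topological consequences of this description.

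The end space of a locally finite tree is totally disconnected; in fact it is a Cantor set or a subset of one. A continuous surjection from such a space onto $\pG$ cannot be injective whenever $\pG$ is not itself totally disconnected. By Stallings' theorem together with the structure theory of hyperbolic groups, $\pG$ fails to be totally disconnected exactly when $G$ is not virtually free. Applying this to arbitrarily small open subsets, and using $G$-equivariance in distribution together with ergodicity, gives density; this is item (1).

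For item (2), I would use the classical dimension-theoretic fact that a finite-to-one, or more generally zero-dimensional-fibered, closed map does not raise dimension (Hurewicz). Hence, if $\dim_t\pG\ge 2$, the set of points with non-trivial fibers must be large, and in particular uncountable. A countable non-injectivity set would force $\dim_t\pG\le 1$.

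For item (4), I would apply the Hurewicz formula $\dim Y\le \dim X+\sup_y \dim_t$ in its multiplicity form for closed maps from a zero-dimensional space: some fiber must have cardinality at least $\dim_t\pG+1=n$.

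Item (3) uses planarity. In a planar Cayley graph, distinct geodesic rays cannot cross more than once, so rays are ordered along the circle $\pG=S^1$. Each exceptional direction then corresponds to a complementary gap bounded by disjoint rays with the same endpoint. These gaps can be labelled by pairs of adjacent vertices at which the rays branch, so there are only countably many.

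Items (5) and (6) are probabilistic. For (5), I would show that for a fixed $\xi'$, almost surely $\xi'$ is non-exceptional, by the coalescence result Theorem~\ref{bmthmcoalesce}. Fubini over $\nu$ then gives $\nu$-measure zero. The Hausdorff dimension gap needs a quantitative version: cover the exceptional set by shadows of balls. If $\xi'$ is exceptional, two disjoint $\omega$-geodesics converge to $\xi'$, so at scale $n$ along $[1,\xi')$ there is a pair of non-coalesced geodesics passing near a point at distance $n$. Using Proposition~\ref{prop-effbt} and Proposition~\ref{lem-effbt}, the probability of this is at most $e^{-cn}$. The expected number of shadows needed at scale $n$ is therefore at most $e^{(h-c)n}$, where $h$ is the volume entropy. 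This gives dimension at most $(h-c)/h$ times the dimension of $\pG$, uniformly almost surely after a Borel--Cantelli step.

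For (6), $k$ disjoint rays to one point must all pass near a common point on a word geodesic at each scale. The exponential estimates above make the probability of $k$ disjoint rays through a bounded ball decay quickly in $k$. A union bound over scales, combined with the counting, then yields a uniform bound on $k$.

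The main obstacle is the quantitative coalescence estimate needed for (5) and (6), namely an exponential bound on the probability of non-coalescence by scale $n$. Qualitative coalescence as in \cite{BM19} is not sufficient for these items. My first approach would be to iterate the hyperplane and effective-bigeodesic estimates across independent annuli.
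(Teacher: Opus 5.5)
Your framework is the right one: the Cannon--Thurston map $f\colon\partial T\to\pG$ from a compact zero-dimensional end space, exceptional directions as non-singleton fibres, multiplicity as fibre cardinality, and probabilistic estimates for (5)--(6). The paper itself only cites \cite{BM24}. Item (3) and the measure-zero half of (5) are fine.

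\textbf{Items (2) and (4).} Item (2) rests on a false lemma: finite-to-one or zero-dimensional-fibred closed maps \emph{can} raise dimension. Your own $f$ is a counterexample, since it is closed, onto $\pG$, defined on a zero-dimensional space, and has zero-dimensional fibres. So is the at most $2$-to-$1$ map from the Cantor set onto $[0,1]$. Hurewicz's theorem involving $\sup_y\dim f^{-1}(y)$ bounds $\dim X$ in terms of $\dim Y$, not conversely, so the ``hence'' in (2) is a non sequitur. The correct argument runs as follows. If $E$ is the exceptional set, then $f\colon f^{-1}(\pG\setminus E)\to\pG\setminus E$ is a closed continuous bijection, hence a homeomorphism, so $\pG\setminus E$ is zero-dimensional. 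If $E$ were countable it would also be zero-dimensional, and the Urysohn--Menger addition theorem $\dim(A\cup B)\le\dim A+\dim B+1$ would give $\dim_t\pG\le 1$. Likewise, in (4) the formula you quote is the wrong theorem. You need Hurewicz's dimension-raising theorem: a closed surjection whose fibres have at most $k+1$ points satisfies $\dim Y\le\dim X+k$. With $\dim X=0$ and $\dim Y=n-1$, this yields a fibre with at least $n$ ends, i.e.\ $n$ pairwise disjoint tails.

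\textbf{Two smaller fixes.} In (1), ergodicity does not give density, because the event that a given open $U$ contains an exceptional point is not $G$-invariant. Instead, apply the non-injectivity argument to $f^{-1}(K)\to K$ for a nondegenerate continuum $K\subset U$; such a $K$ exists in every open $U$ when $G$ is not virtually free. For the forward tree, Theorem~\ref{bmdirexists} gives a ray in each \emph{fixed} direction only almost surely. Surjectivity for all directions simultaneously follows because $f(\partial F(1,\omega))$ is compact and contains a countable dense set of directions.

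\textbf{Items (5) and (6).} This is the more serious gap. Propositions~\ref{prop-effbt} and~\ref{lem-effbt} only serve to keep $\omega$-geodesics near word geodesics; they say nothing about two $\omega$-geodesics meeting. The event that two disjoint $\omega$-geodesics pass near a given point at distance $n$ is not small. What is exponentially small is that two $\omega$-geodesics stay disjoint across all $\asymp n/D$ slabs between their starting region and that point.
\begin{itemize}
\item This decay comes from Proposition~\ref{bmproppositive}. The slab events along $[1,g]$ are $1$-dependent, each has probability at least $\beta$, and each forces any two crossing $\omega$-geodesics to meet.
\item Since $[1,g]$ fellow-travels $[1,\eta)$ for every $\eta\in S(g,R)$, this gives a bound $(1-\beta)^{cn}$ uniform over the shadow. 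That shadow-uniform version of Theorem~\ref{bmthmcoalesceeff} is the estimate your covering count actually needs.
\item Disjointness holds only beyond the branch vertex (forward tree) or the starting points $a,b$ (backward tree), which may be arbitrarily far from $1$. You must write the exceptional set as a countable union indexed by where the disjoint tails begin, bound each piece with index-independent constants, and use countable stability of Hausdorff dimension. That is where the uniform dimension gap comes from.
\end{itemize}
For (6) the same architecture is needed, with a per-slab cost that tends to $0$ with $k$ fast enough to beat the $\asymp\lambda^n$ shadows at scale $n$. The reason it does: $k$ disjoint crossings of a hyperplane force one crossing at distance $\gtrsim\log_\lambda k$ from the axis. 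Proposition~\ref{lem-effbt}, with $c$ taken large as in the remark after Lemma~\ref{lem-decorr-tree}, makes that cost a large negative power of $k$. However, the event in Proposition~\ref{lem-effbt} is global, so multiplying these costs over many slabs requires a localized version of it. ``Decays quickly in $k$'' plus a union bound does not close the argument without that.
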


\section{Distribution of coalescence point}\label{sec-coalescedistrbn}

The purpose of this section is to study the point of coalescence of two FPP geodesic rays going in the same direction. Recall that

\begin{itemize}
	\item  $\partial^2 G:= \{(\xi, \xi'): \xi, \xi' \in \pG; \, \xi\neq \xi'\}.$
	\item  {$\partial^3 G:= \{(\xi, \xi',\xi''): \xi, \xi',\xi'' \in \pG; \, \xi\neq \xi'\neq \xi'' \ne \xi \}.$}
\end{itemize}

Let $(\xi_1,  \xi_2, \xi_3) \in \partial^3 G $. The union of bi-infinite geodesics $(\xi_1,\xi_2) \cup  (\xi_2,\xi_3) \cup (\xi_3,\xi_1)$ in $\Gamma$ is called  a \emph{geodesic ideal triangle} or simply
an \emph{ideal triangle} with vertices at $\xi_1  \xi_2, \xi_3$ and is denoted as
$ \Delta\xi_1  \xi_2\xi_3$. There exist constants $D,D'$ (depending only on the hyperbolicity constant $\delta$ of $\Gamma$) such that the following holds:
\begin{enumerate}
\item There exists a point $c_{\xi_1  \xi_2, \xi_3} \in \Gamma$ lying in a $D-$neighborhood of each of the bi-infinite geodesics $(\xi_1,\xi_2),   (\xi_2,\xi_3), (\xi_3,\xi_1)$ (see \cite[Lemma 2.7]{mahan-sardar} for instance).
\item For any point $z  \in \Gamma$ lying in a $D-$neighborhood of each of the bi-infinite geodesics $(\xi_1,\xi_2),   (\xi_2,\xi_3), (\xi_3,\xi_1)$, $d(z, c_{\xi_1  \xi_2, \xi_3}) \leq D'$ (see the paragraph following \cite[Lemma 2.7]{mahan-sardar} for instance).
\end{enumerate}
We refer to $c_{\xi_1  \xi_2, \xi_3}$ as a \emph{deterministic centroid}, or simply a \emph{centroid} of $ \Delta\xi_1  \xi_2\xi_3$ (in the literature, $c_{\xi_1  \xi_2, \xi_3}$ is sometimes called the \emph{barycenter} of
$(\xi_1  \xi_2, \xi_3) \in \partial^3 G $). The above discussion shows that  $c_{\xi_1  \xi_2, \xi_3}$  is coarsely well-defined. We shall therefore refer to $c_{\xi_1  \xi_2, \xi_3}$ as the \emph{centroid} of $ \Delta\xi_1  \xi_2\xi_3$.

In the context of a random backward tree $T(\xi,\omega)$, the point
$\xi$ is distinguished. To make the notation suggestive we shall denote other ideal end-points of $T(\xi,\omega)$ by letters of the English alphabet.
 For any
{$a\neq b \neq \xi \ne a \in \partial G$} Theorem~\ref{thm-btree} ensures that $T(\xi,\omega)$ contains a subtree 
$T(a,b,\xi,\omega)$ given by the union of the random bi-infinite geodesics
$(a,\xi)_\omega$ and $(b,\xi)_\omega$. Thus, $T(a,b,\xi,\omega)$ is a tripod having a well-defined centroid $c_{ab\xi}(\omega)$. Note that 
$c_{ab\xi}(\omega)$ is the point of coalescence of $[o_1, \xi)_\om$ and $[o_2, \xi)_\om$ where $o_1, o_2$ are points in $(a,\xi)$ and $(b,\xi)$ close to $a, b$ respectively in the topology of the Gromov compactification $\Gamma \cup \pG$. We refer to $c_{ab\xi}(\omega)$ as the \emph{random centroid}
of $a,b,\xi$.

The aim of this section is to show that the distance of the  random centroid from the deterministic centroid has exponentially decaying tails under Assumption~\ref{assume-subexp}. To make this precise, we make the following definition. 

\begin{defn}
	Define the {\bf coalescence-centroid distance} to be a random variable $X_{ab\xi}$, measuring the distance from the point of coalescence to a centroid as follows:\\
	We choose a centroid $c_{ab\xi}$ of the triangle with vertices $a, b, \xi$.	Also, let $c_{ab\xi}(\om)$ denote the random centroid. By Theorem \ref{bmthmcoalesce}, 
	$c_{ab\xi}(\om) \in \Ga$ for a.e.\ $\om \in \Omega$. Define
	$$X_{ab\xi}(\om) = d(c_{ab\xi}, c_{ab\xi}(\om))$$ to be  the graph distance from the centroid  to random centroid.
\end{defn}

The main theorem of this section is the following. 

\begin{theorem}\label{thm-centroidexpdecay}
There exists $c>0, N_0 \in \natls$ such that for all $(a, b, \xi)\in \partial^3G$, and $N \geq N_0$, $\P [X_{ab\xi} \geq N] \leq e^{-cN}.$
\end{theorem}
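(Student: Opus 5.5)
Translate by the group action so that the deterministic centroid $c_{ab\xi}$ is (within bounded distance of) the identity $1$. Since $\P$ is $G$-invariant and the constants in Propositions~\ref{prop-effbt} and~\ref{lem-effbt} depend only on $\Gamma$, $C$ and $c$, any bound obtained at $1$ will be uniform over $(a,b,\xi)\in\partial^3G$. With $D$ the centroid constant, each of the three deterministic geodesics $(a,\xi),(b,\xi),(a,b)$ passes through $N_D(1)$. The random centroid $c_{ab\xi}(\om)$ is the branch point of the tripod $(a,\xi)_\om\cup(b,\xi)_\om$. Concretely, pick $a_n\in(a,\xi)$ with $a_n\to a$, $b_n\in (b,\xi)$ with $b_n\to b$, and $\xi_n \to \xi$. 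For $n$ large, the geodesics $[a_n,\xi_n]_\om$ and $[b_n,\xi_n]_\om$ agree with the bi-infinite geodesics on any fixed ball (Theorem~\ref{bmthmcoalesce}, Theorem~\ref{thm-btree}), and $c_{ab\xi}(\om)$ is their point of divergence.

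The key is to place the random centroid on two random geodesics whose deterministic counterparts stay far from each other away from $1$. First, $c_{ab\xi}(\om)$ lies on $(a,\xi)_\om$. Second, the concatenation of the portions of $(a,\xi)_\om$ and $(b,\xi)_\om$ from $a$ and $b$ up to the branch point is a bi-infinite path in the tree $T(\xi,\omega)$. This path is a union of two $\om$-geodesic rays, but it need not be an $\om$-geodesic itself, so I will avoid relying on it. Instead I will use a direct event argument.

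Suppose $X_{ab\xi}\ge N$, and write $w=c_{ab\xi}(\om)$. Then $w\in[a_n,\xi_n]_\om\cap[b_n,\xi_n]_\om$ for large $n$, with $d(w,1)\ge N$. The projections of $w$ onto $[a_n,\xi_n]$ and onto $[b_n,\xi_n]$ cannot both be near $1$ unless $w$ is far away, which is exactly what Proposition~\ref{lem-effbt} forbids. So split into cases according to where $w$ projects.

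\emph{Case 1: the projection of $w$ onto $[a_n,\xi_n]$ lies in $N_C(1)$.} Then Proposition~\ref{lem-effbt} (with $u=a_n$, $v=\xi_n$) applies and gives probability at most $e^{-cN}$.

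\emph{Case 2: the projection lies far from $1$, toward $\xi$ or toward $a$.} If $w$ projects onto $[a_n,\xi_n]$ at a point far toward $\xi$, then by thinness of the triangle $a,b,\xi$, $w$ also projects onto $(b,\xi)$ far from $1$. In that case $[b_n,\xi_n]_\om$ and $[a_n,\xi_n]_\om$ share a point far up the $\xi$-leg while splitting earlier, which contradicts their coalescing and not re-splitting, unless $[b_n,\xi_n]_\om$ avoids $N_R(1)$. That event has probability at most $e^{-cR}$ by Proposition~\ref{prop-effbt}, using the effective tail bound on $R_\om$. If instead the projection is far toward $a$, then $w$ projects onto $[b_n,\xi_n]$ near $1$, or far toward $b$. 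The former is Case 1 with the roles of $a$ and $b$ swapped. In the latter, $w$ is far from $1$ near both the $a$-leg and the $b$-leg, which is impossible by thinness once $N$ exceeds a constant multiple of $\delta$.

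To run these projection estimates uniformly I will restrict $w$ to lie on $[a_n,\xi_n]_\om$, which passes within $R_\om$ of $1$ by Proposition~\ref{prop-effbt}. On the event $R_\om<N/10$, the geodesic $[a_n,\xi_n]_\om$ tracks the deterministic one coarsely near $1$. Together with the union bound above, this gives $\P[X_{ab\xi}\ge N]\le 3e^{-c'N}$. Finally, pass $n\to\infty$: the events are monotone and localize on finite balls, so the bound survives.

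The main obstacle is the case where $w$ is far along the $\xi$-leg. Here the natural worry is that the geodesics from $a$ and $b$ could run parallel and close to each other for a long time before merging. Proposition~\ref{prop-effbt} handles this as follows. Both $[a_n,\xi_n]_\om$ and $[b_n,\xi_n]_\om$ pass within $R_\om$ of $1$. Once they pass near $1$ the two geodesics are near a common point, but nearness does not force merging. So I would apply Proposition~\ref{lem-effbt} to the pair $(a_n,b_n)$: the geodesic $[a_n,b_n]_\om$ must pass near $1$. If the branch point were at distance $N$ up the $\xi$-leg, the tree path from $a_n$ to $b_n$ through $w$ would be an $\om$-path far from $1$ that competes with $[a_n,b_n]_\om$. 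Making this comparison rigorous, presumably via a Busemann-type inequality on the tree path, is the step I expect to be delicate.
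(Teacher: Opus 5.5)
\textbf{There is a genuine gap.} It sits in the case you yourself flag as the main obstacle: the random centroid $w=c_{ab\xi}(\om)$ at distance $\ge N$ from $1$ but within bounded distance of the $\xi$-leg $[1,\xi)$ of the deterministic tripod.

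Your lateral cases are fine. Whenever $w$ projects into $N_C(1)$ on $(a,\xi)$ or on $(b,\xi)$ while $d(w,1)\ge N$, Proposition~\ref{lem-effbt} gives $e^{-cN}$. This is the last step of the paper's proof. Handling the $a$- and $b$-legs by projecting onto the \emph{other} geodesic is a valid substitute for the paper's backtracking estimate.

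In the $\xi$-leg case, however, both random rays are perfectly well-behaved in location. Each passes within $R_\om$ of $1$ and then fellow-travels $[1,\xi)$; the two are merely disjoint for a stretch of length about $N$. Propositions~\ref{prop-effbt} and~\ref{lem-effbt} control only \emph{where} geodesics go, not \emph{whether} two nearby geodesics intersect, so they give nothing here.
\begin{itemize}
\item Your Case~2 claims a contradiction with ``coalescing and not re-splitting''. But by definition of $w$ the two rays are disjoint before $w$, so there is no earlier split to contradict. Both passing within $R_\om$ of $1$ only says they pass near a common point, as you later concede.
\item Your proposed repair via $[a_n,b_n]_\om$ cannot work either. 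The tree path $a_n\to w\to b_n$ is a concatenation of two rays toward $\xi$, not a competitor for the $\om$-geodesic between $a_n$ and $b_n$. Its excess length over $d_\om(a_n,b_n)$, of order $N$, contradicts nothing.
\item Busemann-type differences $d_\om(a_n,w)-d_\om(b_n,w)$ do not localize $w$ either.
\end{itemize}

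What is missing is a \emph{quantitative coalescence mechanism}. Location estimates cannot supply it, just as in Euclidean FPP, where rays toward a common direction can run parallel for a long time. The paper gets it from Theorem~\ref{bmthmcoalesceeff}(1):
\begin{itemize}
\item Place hyperplanes normal to $[y,\xi)$ at spacing $D$.
\item In each slab, the event $B_i$ of Proposition~\ref{bmproppositive} has probability at least $\beta$ and forces every pair of $\om$-geodesics crossing the slab to meet.
\item These events are $1$-dependent in $i$. Hence the probability that $(a,\xi)_\om$ and $(b,\xi)_\om$ have not coalesced by the $\ep N$-th hyperplane is at most $Cp^{\ep N}$.
\end{itemize}
Once this bound, together with your projection argument or the paper's analogous bounds for the other two legs, confines the projection of $w$ onto the tripod to within $O(\ep N)$ of the centroid, Proposition~\ref{lem-effbt} with a union bound over the projection point finishes the proof.

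Without this slab and positive-probability-event input (or some equivalent local-modification argument), the $\xi$-leg case is not handled. Your claimed bound $\P[X_{ab\xi}\ge N]\le 3e^{-c'N}$ is therefore not established.
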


\subsection{Background on coalescence in hyperbolic spaces}\label{sec-backcoalesce}

We refer the reader to \cite[Section 7.1]{BM19} for basics about hyperplanes in our context.
Let $[1,\xi)=\{1=x_0,x_1,\ldots,\}$ denote a geodesic ray in the direction $\xi\in \pG$. Let $i\geq 0,D\in  \natls$ and let $H_{iD}$ denote the hyperplane perpendicular to $[1,\xi)$ at $x_{iD}$. Then for $D$ sufficiently large, any $o\in \Ga$, and for a.e. $\omega \in \omegap$, every $\omega$-geodesic ray from $o$ to $\xi$ must cross $H_{iD}$ for all sufficiently large $i$.
To prove Theorem~\ref{thm-centroidexpdecay}, we first recall a crucial fact from  \cite{BM19}. The region between $H_{iD}$ and $H_{(i+1)D}$ is denoted by $\SSS(i,D)$ and is referred to as the $i-$th \emph{slab} in the direction $\xi$ at \emph{scale $D$}.
\begin{prop}\cite[Proposition 7.10]{BM19}
	\label{bmproppositive}
	For $D=D(\rho,\Ga)$ sufficiently large, there exists $\beta=\beta(D,\Ga, \rho)>0$ independent of $i$ such that for each $i\in \N$ there exists an event $B_{i}$ depending only on the configuration restricted to $N_{D/10}( \SSS(i,D))$ with the following properties:
	\begin{enumerate}
		\item $\P(B_{i}) \geq \beta.$
		\item {For every $o_2\in G$, for all sufficiently large $i$, on $B_{i}$, every pair of $\omega$-geodesics rays started from the two starting points $1$ and $o_2$ in the direction $\xi$ intersect on $[x_{iD},x_{(i+1)D}]$.} 
	\end{enumerate}
\end{prop}

\begin{rmk}\label{rmk-hyp2hyp}
The proof of \cite[Proposition 7.10]{BM19} in fact gives an explicit description of the event $B$. After constructing uniformly quasiconvex hyperplanes $\HH_i$ through $x_{iD}$ as above, $B_i$ denotes the event  that any pair of $\omega$ geodesics from $\HH_i$ to $\HH_{i+1}$ intersect on the geodesic segment $[x_{iD},x_{(i+1)D}]$ for 
$\omega \in B_i$.
\end{rmk}

The proof of Proposition \ref{bmproppositive} (in particular Remark~\ref{rmk-hyp2hyp}) gives the following more
effective version of Theorem \ref{bmthmcoalesce}. Abusing terminology slightly, we shall refer to triangles with \emph{at least one vertex on $\pG$} as ideal triangles below.

\begin{theorem}\label{bmthmcoalesceeff} There exists $k > 0$,
	a scale $D \geq 1$, $p \in (0,1)$, $N_0 \in \natls$ and $C\geq 1$ such that 
	for any given direction $\xi$, 
	  the following hold for all $o_1, o_2 \in G$:
	\begin{enumerate}
	\item Let $c=c(o_1,o_2, \xi)$ denote a centroid of the ideal triangle with vertices $o_1,o_2, \xi$, and let $[c,\xi)$ be a geodesic ray from $c $
	to $\xi$. Let $c=c_0, c_1, c_2, \dots$ be points on
	$[c,\xi)$ such that $d(c_i, c_0) = Di$.
	Then the probability that $[o_1,\xi)_\om$ and 
	$[o_2,\xi)_\om$  coalesce after $N_k([c,c_n])$  
	(i.e.\ $[o_1,\xi)_\om\cap [o_2,\xi)_\om \cap N_k([c,c_n]) =\emptyset$) 
 is at most $Cp^n$
	for $n \geq N_0$.
	\item 	{Let $H_i$ denote the hyperplane at $c_i$ perpendicular to $[c,\xi)$}. The probability that $[o_1,\xi)_\om$ has 2 disjoint subpaths between hyperplane {$H_i$ and $H_{i+M}$} (i.e.\  $[o_1,\xi)_\om$  as a parametrized path goes from $H_i$ to $H_{i+M}$ and then  backtracks   to $H_i$)  
	has probability  at most $Cp^M$ for $M \geq N_0$.
	\end{enumerate}

\end{theorem}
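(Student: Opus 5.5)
The proof will follow the outline of \cite[Proposition 7.10]{BM19} and \cite[Theorem 7.9]{BM19}, but with the base geodesic $[1,\xi)$ replaced by the ray $[c,\xi)$ issuing from the centroid. The goal is two effective statements. Part (2) says an $\omega$-geodesic ray to $\xi$ cannot cross far ahead and then backtrack. Part (1) says two such rays must already coalesce near $[c,c_n]$, and otherwise an exponentially unlikely event occurs.

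I would start with the deterministic setup. Fix the scale $D$ from Proposition~\ref{bmproppositive} and build the uniformly quasiconvex hyperplanes $H_i$ perpendicular to $[c,\xi)$ at $c_i$. By thin triangles, both deterministic geodesic rays $[o_1,\xi)$ and $[o_2,\xi)$ lie within a bounded distance (depending on $\delta$) of $[c,\xi)$ beyond $c$. Hence every $H_i$ with $i\ge 1$ separates $o_1,o_2$ from $\xi$, and each $H_i$ meets both rays near $c_i$.

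For part (2), a backtracking ray $[o_1,\xi)_\om$ contains an $\omega$-geodesic subsegment that starts and ends on $H_i$ yet reaches $H_{i+M}$. Its endpoints $u,v\in H_i$ project (by quasiconvexity of $H_i$) onto $[u,v]$ far from some point $w$ on $H_{i+M}$ with $d(w,[u,v])\gtrsim DM$. This is exactly the event of Proposition~\ref{lem-effbt}. Applying it with $o$ ranging over points near $H_i\cap N_C([c,\xi))$ gives a bound $e^{-cDM}$ for each base point. To control how far along $H_i$ the segment $[u,v]$ lives, I would also apply Proposition~\ref{prop-effbt}: $[o_1,\xi)_\om$ passes within $R_\om$ of $c_i$, and $R_\om$ has an exponential tail. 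A union bound over the linearly many relevant base points, weighted by those exponential tails, then yields $Cp^M$.

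For part (1), I would use the events $B_i$ of Proposition~\ref{bmproppositive} and Remark~\ref{rmk-hyp2hyp}, built on the slabs $\SSS(i,D)$ along $[c,\xi)$. Each $B_i$ has probability at least $\beta$ and depends only on the configuration in $N_{D/10}(\SSS(i,D))$. So $B_1,B_3,B_5,\dots$ are independent, and the probability that none of $B_1,\dots,B_{n-1}$ (odd indices) occurs is at most $(1-\beta)^{n/2}$. Now suppose some $B_i$ holds with $i<n$, and both rays cross from $H_i$ to $H_{i+1}$ in a single pass within bounded distance of $c_i$. Then, because they are geodesics from $H_i$ to $H_{i+1}$, they intersect on $[c_{iD},c_{(i+1)D}]$, and by uniqueness of geodesics they coalesce there, inside $N_k([c,c_n])$. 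The ways this can fail are: a ray backtracks across slab $i$ (handled by part (2) with $M$ of order $n$ and a union bound over $i$), or a ray crosses $H_i$ far from $c_i$ (handled by Propositions~\ref{prop-effbt} and \ref{lem-effbt}, with exponential tails). Summing the pieces gives $Cp^n$.

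The main obstacle is uniformity. The constants must not depend on $o_1,o_2,\xi$, and Proposition~\ref{bmproppositive}(2) is only stated for ``sufficiently large $i$''. To remove this, I would re-derive it from Remark~\ref{rmk-hyp2hyp} using the effective control above rather than almost-sure statements. The key input is that rays from $o_1,o_2$ enter $H_1$ near $c_1$ with exponential tails, so that ``sufficiently large $i$'' becomes an event whose complement has exponentially small probability. Translation invariance of $\P$ under $G$ lets all constants be chosen depending only on $\Gamma$ and $\rho$.
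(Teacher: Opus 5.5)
Your part (1) has the same core as the paper's proof. The events $B_j$ along $[c,\xi)$ have probability at least $\beta$ and are $1$-dependent. On any $B_j$ with $j<n$ the two rays meet on $[c_j,c_{j+1}]\subset[c,c_n]$, so failure requires all of $B_1,\dots,B_{n-1}$ to fail.

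What you miss is that this same mechanism proves part (2) directly. By Remark~\ref{rmk-hyp2hyp}, on $B_j$ \emph{any} two $\omega$-geodesics from $H_j$ to $H_{j+1}$ intersect. The two disjoint subpaths of $[o_1,\xi)_\om$ in (2) both run between $H_i$ and $H_{i+M}$. So for every $i\le j<i+M$, each contains an $\omega$-geodesic crossing from $H_j$ to $H_{j+1}$. On $B_j$ these two crossings would meet, which is impossible for disjoint subpaths of a simple path. Hence the event in (2) lies in $\bigcap_{i\le j<i+M}B_j^c$ and has probability at most $(1-\beta)^{\lfloor M/2\rfloor}$. This is the paper's proof: both items reduce to bounding the probability that there are two disjoint $\omega$-geodesics from $H_i$ to $H_{i+M}$.

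The same remark also clears up two other points. Your extra failure modes in (1) are superfluous: on $B_j$ a ray cannot contain two disjoint crossings of slab $j$, and the meeting point is already on $[c_j,c_{j+1}]$. Your ``main obstacle'' is also not one. The ``sufficiently large $i$'' in Proposition~\ref{bmproppositive}(2) only asks that $H_i$ separate the starting points from $\xi$, which is deterministic and which you already verified for $i\ge 1$ in your setup.

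The steps you use in place of this argument do not work as written.

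\textbf{In (1).} A backtrack across the single slab $i$ is an $M=1$ event whose probability is not small. So ``part (2) with $M$ of order $n$'' cannot control it. Your argument survives only because that case never needed separate handling.

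\textbf{In (2).} Your route through Propositions~\ref{prop-effbt} and \ref{lem-effbt} is genuinely different, but it has two defects.
\begin{enumerate}
\item The candidate base points near $H_i$ within distance $R$ of $c_i$ are in general exponentially many in $R$ (you only have $|B_R(c_i)|\le C\lambda^R$), not linearly many. The union bound closes only if you localize at radius $\epsilon DM$ with $\epsilon\log\lambda$ small compared to the rate $c$ in Proposition~\ref{lem-effbt}.
\item Proposition~\ref{prop-effbt} only says the ray passes near $c_i$ \emph{somewhere}. That does not localize $\Pi_{uv}(w)$, because the return point $v\in H_i$ may a priori be far from $c_i$ along $H_i$.
\end{enumerate}
A repair in your framework is possible. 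First apply Proposition~\ref{lem-effbt} with base point $c_{i+M}$ to place the visit $w\in H_{i+M}$ within $\epsilon DM$ of $c_{i+M}$. Then apply it again to $v\in[w,\xi)_\om$, which projects onto $[w,\xi)$ near $w$ yet lies at distance $\gtrsim DM$ from that projection. This works, but the $B_j$ argument is shorter and uses nothing beyond what you already invoke for (1).
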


\begin{proof}
We briefly enunciate the ingredients necessary from \cite{BM19} to prove these statements. {Since the arguments are quite similar to the ones there, we omit the details}. Note that both statements in the Theorem follow from finding a lower bound on the probability of the event that there exists two disjoint $\omega-$geodesics from  $H_i$ to $H_{i+M}$. This is because the two
{disjoint} subpaths in item (2) are {disjoint} $\omega-$geodesics from {some point in 
$H_i$ to some point $H_{i+M}$ (the endpoints are not necessarily the same).}

Now, Proposition~\ref{bmproppositive} and Remark~\ref{rmk-hyp2hyp} give events (one for each $i$) of uniform
(i.e.\ independent of $i$)
positive probability $\beta>0$ on which all $\omega-$geodesics  between $H_i$ and $H_{i+1}$ intersect. {These events are not disjoint since the hyperplanes are only coarsely-well defined. However, by choosing $D$ sufficiently large it can be ensured that these events are $1$-dependent (in $i$), i.e., the collection of the events with even indices (as well as the ones with odd indices) are independent. The bounds now follow by choosing $p\in (1-\beta,1)$ appropriately large.}
\end{proof}

\subsection{Proof of Theorem~\ref{thm-centroidexpdecay}}
Let $a, b, \xi$ be as in the statement of 
Theorem~\ref{thm-centroidexpdecay}. Let $y=c_{ab\xi}$ denote the (coarsely well-defined deterministic) centroid, and $z=z(\omega)=c_{ab\xi}(\omega)$ the random centroid, i.e.\ the point at which $(a,\xi)_\omega$ and  $(b,\xi)_\omega$ coalesce. {For any sequences
$a_n \to a$, $b_n \to b$ 
we can assume without loss of generality that $d(a_n,y), d(b_n,y) \gg N$ for all $n$ large enough
(where $N$ is as in the statement of 
Theorem~\ref{thm-centroidexpdecay})}. Let $\TT= \TT(a,b,\xi) $ denote the tripod with vertices at $a, b, \xi$ and centroid at $y$.

\begin{figure}[htbp!]
\includegraphics[width=0.8\textwidth]{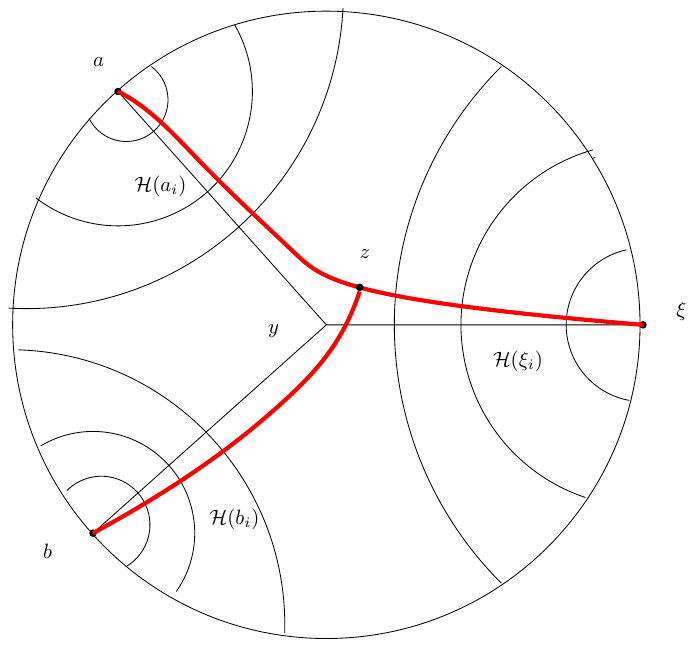}
\caption{Proof of Theorem \ref{thm-centroidexpdecay}: We want to show that the coalescence point $z$ of the geodesics $(a,\xi)_{\omega}$ and $(b,\xi)_{\omega}$ cannot be too far from the deterministic centroid $y$. By considering hyperplanes normal to $[y,a), [y,b)$ and $[y,\xi)$ and using Theorem \ref{bmthmcoalesceeff} we know that $z$ is unlikely to lie beyond too many of these hyperplanes from $y$ in any of the three directions. That $z$ is typically close to $y$ is then deduced using Proposition \ref{prop-effbt}.}
\label{f:tripod}
\end{figure}


Since $\Gamma$ is $\delta-$hyperbolic, there exists $k_0(=k_0(\delta))$ such that hyperplanes normal to distinct geodesic components of $\TT \setminus B_{k_0}(y)$ are disjoint.
Let $a_0, b_0, \xi_0$ be points on $[y,a)$, $[y,b)$, $[y,\xi)$ respectively at distance $k_0$ from $y$. Next, let $a_i, b_i, \xi_i$, $i=1,2,\cdots$ be points on $[y,a)$, $[y,b)$, $[y,\xi)$ respectively at distance $k_0+iD$ from $y$, where $D$ is the scale in Theorem~\ref{bmthmcoalesceeff}. {Thus, $a_i \to a, b_i \to b$, so that for $i$ large enough, $d(a_i,y), d(b_i,y) \gg N$ as in the first paragraph of the proof.}
Let $\HH(a_i), \HH(b_i), \HH(\xi_i)$ be hyperplanes through $a_i, b_i, \xi_i$ normal to $[y,a)$, $[y,b)$, $[y,\xi)$ respectively for $i=0,1,2,\cdots$.

Let $\ep \in (0,1)$ (to be chosen later) and $N$ be as in the statement of Theorem~\ref{thm-centroidexpdecay}. Then, by the first conclusion of Theorem~\ref{bmthmcoalesceeff}, we have the following:
\begin{enumerate}
\item The probability that $(a,\xi)_\omega$ and $(b,\xi)_\omega$ coalesce beyond 
$\HH(\xi_i)$ for $i=\ep N$ is at most $p^{\ep N}$. (Here and below, by "beyond" we shall mean the half space in $\Gamma \setminus  \HH(\xi_i)$ not containing the  centroid $y$.)
\item The probability that $(\xi,a)_\omega$ and $(b,a)_\omega$ coalesce 
at $a'$ beyond 
$\HH(a_i)$ for {$i=\floor{\frac{\ep N}{D}}$} is at most $p^{\ep N}$.
\item The probability that $(\xi,b)_\omega$ and $(a,b)_\omega$ coalesce at $b'$ beyond 
$\HH(b_i)$ for $i=\ep N$ is at most $p^{\ep N}$.
\end{enumerate} 
If  any of the  three events described in the items above happen, we immediately have the exponential decay demanded by Theorem~\ref{thm-centroidexpdecay}.

Hence, we can now assume that both the coalescence points $a'$ and $b'$ have nearest-point projections onto $[y,a)$ and $[y,b)$ respectively at distance at most
$\ep N$ from $y$.

Next, if the nearest-point-projection of $z$ onto the (uniformly quasiconvex) tripod $\TT$ lies on $(y,a)$ at distance greater than $2 \ep N$, then it follows that 
$(a,\xi)_\omega$ backtracks from the hyperplane $\HH(a_{\ep N})$ to  $\HH(a_{2\ep N})$, an event occurring with probability at most $p^{\ep N}$ by the second 
conclusion of Theorem~\ref{bmthmcoalesceeff}. 
{A similar argument works with $(y,b)$ or $(y,\xi)$ replacing $(y,a)$.} Again, we  have the exponential decay demanded by Theorem~\ref{thm-centroidexpdecay}.

Thus, we may assume without loss of generality that
\begin{enumerate}
\item the nearest point-projection of $z$ onto $(ay\xi)$ lies on $(ay]$ at distance
at most $m$ from $y$,
\item  the nearest point-projection of $z$ onto $(by\xi)$ lies on $(by]$ at distance
at most $m$ from $y$,
\item $m<2\ep N$.
\end{enumerate}

Let $z_a$ denote the nearest-point projection of $z$ onto $(ay\xi)$ so that 
{$d(z, z_a) \geq N-m\geq N(1-2\ep)$.} Similarly, let $z_b$ denote the nearest-point projection of $z$ onto $(by\xi)$ so that {$d(z, z_b) \geq N-m\geq N(1-2\ep)$.} In particular,
$[a,\xi)_{\omega}$ passes through $z$ at distance at least {$N(1-2\ep)$} from $z_a$.
{By Proposition~\ref{lem-effbt}}, this has probability at most {$e^{-c(1-2\ep)N}$.} Taking a union bound over  
{$m<2\ep N$} (i.e.\ the nearest-point projection $z_a$ is at distance at most  $2\ep N$ from $y$), we finally obtain an upper bound of
{$2 \ep N e^{-c(1-2\ep)N}$} on the event $d(z,y)\geq N$. Finally, choosing
$\ep$ small enough, we obtain an  upper bound of $e^{-cN/2}$. This completes the proof of Theorem~\ref{thm-centroidexpdecay}. \hfil $\Box$

\subsection{Cross-ratios and $n-$point functions}\label{sec-npoint} 
Ideal triangles, and hence approximating tripods are naturally associated
to 3 points in $\pG$. It is natural to wonder what the analog is for $n$ points on $\pG$. This subsection is devoted to this generalization. 
We would thus like to extend Theorem~\ref{thm-centroidexpdecay} to $n-$point functions. We start by investigating the natural $4-$point function
given by the hyperbolic cross-ratio.

We recall the notion of a hyperbolic cross-ratio (see for instance \cite{paulin-bdy}):
\begin{defn}
	The {\bf hyperbolic cross-ratio} of an ordered tuple of four points $x, y, z, w$ in $\Gamma$ is defined to be $$[x:y:z:w] := \frac{1}{2}
	d(x,w) + d(y,z) - d(x,y) - d(z,w).$$
	
	For $a, b, c, d \in \Gamma\cup \pG$, the  {\bf hyperbolic cross-ratio} is defined as
	$$[a:b:c:d] := \sup_{x_i \to a, y_i\to b, z_i\to c, w_i \to d} \liminf
	[x_i:y_i:z_i:w_i].$$
\end{defn}
Assume, as usual that $\Gamma$ is $\delta-$hyperbolic. Then,
 up to an additive constant $C_0$ depending only on $\delta$, $[x:y:z:w]$ equals 
 \begin{enumerate}
 \item $d([x,y],[z,w])$ if $d([x,y],[z,w])\geq C_0$
 \item $-d([x,w],[y,z])$ if $d([x,w],[y,z])\geq C_0$
 \item is bounded in absolute value by a constant $C_1$ depending only on $\delta$, otherwise.
 \end{enumerate}
 Here, as usual, $[x,y]$ denotes the geodesic segment between $x,y$.
For $a, b, c, d \in \Gamma\cup \pG$,  the same assertions go through, except that geodesic segments may be replaced by semi-infinite or bi-infinite geodesics. We refer the reader to \cite{paulin-bdy} for these and other basic properties of the cross-ratio. Next, fix a direction $\xi$ on the boundary.
The following Lemma now follows from these properties  of the cross-ratio.

\begin{lemma}\label{lem-xratio} Let $\pi_\xi$ denote a nearest-point projection onto $[1,\xi)$.
	
	If $[1:\xi:a: b] \gg 0$, then 	$d([a,b], [1,\xi))(\gg 0) $
	equals $|[1:\xi:a: b]|$ up to the uniformly bounded
	constant $C_0$ above. In particular, $d(\pi_\xi(a), \pi_\xi(b))\leq C_0$.
	
	If $[ 1: \xi: a: b] \ll 0$, then $d(\pi_{\xi}(a), \pi_{\xi}(b)) (\gg 0)$
	equals  $|[1:\xi:a: b]|$ up to the uniformly bounded
	constant $C_0$ above.
\end{lemma}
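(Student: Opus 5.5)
We will deduce Lemma~\ref{lem-xratio} directly from the trichotomy for the cross-ratio recalled just before it, applied to the ordered quadruple $(1,\xi,a,b)$, together with thin-quadrilateral geometry in the $\delta$-hyperbolic space $\Gamma$. The pairings in the definition are $d(x,w)+d(y,z)-d(x,y)-d(z,w)$ with $(x,y,z,w)=(1,\xi,a,b)$. So the relevant pairs of geodesics are $[1,\xi)$ versus $[a,b]$ in the positive case, and $[1,b]$ versus $[\xi,a)$ in the negative case.

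\emph{Positive case.} If $[1:\xi:a:b]\gg 0$ (in particular $\geq C_1$), alternatives (2) and (3) of the trichotomy are excluded. Hence (1) holds, and $d([1,\xi),[a,b])$ equals $[1:\xi:a:b]=|[1:\xi:a:b]|$ up to $C_0$, which is the first claim.

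For the projection statement, consider the geodesic quadrilateral with sides $[1,\xi)$, $[\xi,a)$, $[a,b]$, $[b,1]$. Since $[a,b]$ is far from $[1,\xi)$, the standard tree-approximation of four points shows that the quadruple is modelled on a tree with an internal edge of length about $d([1,\xi),[a,b])$. This internal edge separates $\{1,\xi\}$ from $\{a,b\}$. The nearest-point projections of $a$ and $b$ to $[1,\xi)$ both lie within a $\delta$-dependent constant of the attaching point of that edge, i.e.\ of the point of $[1,\xi)$ closest to $[a,b]$. Concretely, a geodesic from $a$ to $\pi_\xi(a)$ followed by the ray must pass near this point, and likewise for $b$. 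This gives $d(\pi_\xi(a),\pi_\xi(b))\leq C_0$ after enlarging $C_0$ by a $\delta$-dependent amount, as permitted.

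\emph{Negative case.} If $[1:\xi:a:b]\ll 0$, then alternative (2) holds. So $d([1,b],[\xi,a))\approx |[1:\xi:a:b]|$ is large, and the approximating tree has a long internal edge separating $\{1,b\}$ from $\{\xi,a\}$. On the ray $[1,\xi)$ this edge appears as a long subsegment. The projection $\pi_\xi(b)$ sits coarsely at the end near $1$, where $b$ branches off; the projection $\pi_\xi(a)$ sits coarsely at the end near $\xi$, where $a$ branches off. Hence $d(\pi_\xi(a),\pi_\xi(b))$ equals the length of the internal edge up to a uniform constant, and that length is $|[1:\xi:a:b]|$ up to $C_0$.

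The only real work is the bookkeeping of the additive constants, using the fact that quadruples in $\Gamma\cup\pG$ are $O(\delta)$-approximated by trees. For ideal points, the limits in the definition of the cross-ratio only change things by bounded amounts. I expect the main (mild) obstacle to be making this approximation precise when $\xi$, $a$, or $b$ lie in $\pG$. I would handle it by truncating at far points $x_i\to\xi$ and $a_i\to a$ and passing to the limit, using that geodesic rays to the same boundary point are eventually $2\delta$-close.
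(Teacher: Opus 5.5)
Your proposal is correct and follows essentially the paper's route. The paper simply asserts that the lemma follows from the recalled trichotomy for the cross-ratio; you apply that trichotomy to the ordered quadruple $(1,\xi,a,b)$ and fill in, via the standard four-point tree approximation, where the projections $\pi_\xi(a)$ and $\pi_\xi(b)$ land, which the paper leaves implicit.
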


\begin{prop}\label{prop-4corr}
	If {$[ 1: \xi: a: b]\ll 0$, }then $c_{a1\xi}(\omega)$ and $X_{b1\xi}(\omega)$
	are {nearly independent}.
	If {$[ 1: \xi: a: b] \gg 0$, }  then $c_{a1\xi}(\omega)$ and $c_{b1\xi}(\omega)$
	are nearly the same.
	
	More precisely, there exists $C\geq 1, \, c\in (0,1)$ such that 
	\begin{enumerate}
		\item in the first case, {the total variation distance between the joint distribution of $(c_{a1\xi}(\omega),c_{a1\xi}(\omega))$ and the product of the two marginals is smaller than $\exp(c'\langle 1,\xi,a, b \rangle)$.} 
		\item  in the second case, there exists $\kappa > 0$ such that
		 $\P[{d(c_{a1\xi}(\omega), c_{b1\xi}(\omega))}\geq N]\leq e^{-\kappa N}$. 
	\end{enumerate}
\end{prop}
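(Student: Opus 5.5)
Write $L := |[1:\xi:a:b]|$. The plan treats the two regimes separately, with Lemma~\ref{lem-xratio} translating the cross-ratio into geometry of the configuration $1,a,b,\xi$. Throughout, $c_{a1\xi}$ and $c_{b1\xi}$ are the deterministic centroids. Geometrically, $c_{a1\xi}$ lies within bounded distance of $\pi_\xi(a)\in[1,\xi)$, and similarly $c_{b1\xi}$ is near $\pi_\xi(b)$.

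\emph{Second case} ($[1:\xi:a:b]\gg 0$). By Lemma~\ref{lem-xratio}, $d(\pi_\xi(a),\pi_\xi(b))\le C_0$, so $d(c_{a1\xi},c_{b1\xi})\le C'$ for a constant $C'$ depending only on $\delta$. By Theorem~\ref{thm-centroidexpdecay}, each random centroid lies within $N/2-C'$ of its deterministic centroid except on an event of probability at most $e^{-c(N/2-C')}$. A union bound and the triangle inequality then give $\P[d(c_{a1\xi}(\omega),c_{b1\xi}(\omega))\ge N]\le 2e^{-c(N/2-C')}\le e^{-\kappa N}$ for $N$ large, with $\kappa<c/2$. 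Small $N$ are absorbed by shrinking $\kappa$ if necessary. This case is routine.

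\emph{First case} ($[1:\xi:a:b]\ll 0$). By Lemma~\ref{lem-xratio}, $d(\pi_\xi(a),\pi_\xi(b))\approx L$. So $y_a:=c_{a1\xi}$ and $y_b:=c_{b1\xi}$ lie on $[1,\xi)$ at mutual distance about $L$; say $y_a$ is closer to $1$. Take a hyperplane $H$ perpendicular to $[1,\xi)$ at the midpoint $m$ of $[y_a,y_b]$, thickened by $D/10$ as in Proposition~\ref{bmproppositive}. This separates $\Gamma$ into a half-space $U_a\ni y_a$ and a half-space $U_b\ni y_b$, each at distance about $L/2$ from the other centroid. The plan is to construct \emph{localized} versions $\tilde c_a(\omega)$ and $\tilde c_b(\omega)$ of the random centroids. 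Here $\tilde c_a$ is defined using only edge weights in $N_{L/4}(U_a)$, and $\tilde c_b$ using only weights in the complement of $N_{L/4}(U_a)$, so that they are exactly independent. We then show $\P[\tilde c_a\ne c_{a1\xi}(\omega)]$ and $\P[\tilde c_b\ne c_{b1\xi}(\omega)]$ are both at most $e^{-c'L}$. Since the total variation distance from a product measure is controlled by the probability that the localized and true variables differ, this gives the required bound $\exp(-c'L)$ (the exponent in the statement should read $-c'|\langle 1,\xi,a,b\rangle|$).

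The main obstacle is the construction of the localized centroids, since the geodesics $(a,\xi)_\omega$ are infinite and depend on everything. I would define $\tilde c_a$ as the coalescence point of restricted geodesics from far points of $(a,y_a]$ and of $[1,y_a]$ to a far point of $[y_a,m]$ inside the ball. Theorem~\ref{bmthmcoalesceeff}(1) shows these coalesce within $O(\epsilon L)$ of $y_a$ with probability $1-e^{-cL}$. Once coalesced, the geodesic ray continues toward $\xi$, and its segment near $y_a$ is determined by local weights, except on the event that it backtracks or exits $N_{L/4}(U_a)$. That event is excluded, with exponentially small failure probability, by Theorem~\ref{bmthmcoalesceeff}(2) together with Proposition~\ref{lem-effbt}. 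The same argument applies to $\tilde c_b$ in the complementary region, where the relevant rays come from $a$ or from $1$ and pass $H$ before reaching $y_b$. For $\tilde c_b$, the input needed is that every $\omega$-geodesic from far back crosses a slab near $m$ where all such geodesics meet. This is the event $B_i$ of Proposition~\ref{bmproppositive}, which occurs at some scale between $m$ and $y_b$ with probability $1-Cp^{L/(4D)}$. On that event the part beyond the slab depends only on the weights beyond $H$, which completes the decoupling.
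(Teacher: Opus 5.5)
Your proposal is correct and follows essentially the same route as the paper. The second case is the identical argument: the deterministic centroids are a bounded distance apart, and Theorem~\ref{thm-centroidexpdecay} plus a union bound finishes. The first case is the paper's local-to-global argument: each random centroid is replaced by the coalescence point of restricted geodesics (from far points on the legs toward $1$ and toward $a$, resp.\ $b$, to a far point toward $\xi$). These proxies depend only on edge weights in disjoint regions, and they agree with the true centroids off an event of probability $e^{-cL}$ by the slab and coalescence estimates of Proposition~\ref{bmproppositive} and Theorem~\ref{bmthmcoalesceeff}.

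The differences are cosmetic. You use one midpoint hyperplane and a half-space split instead of slabs around $\pi_\xi(a)$ and $\pi_\xi(b)$, and you spell out the coupling bound for the total variation distance, which the paper leaves implicit. Two small corrections. The rays relevant to $\tilde c_b$ are those from $1$ and $b$, not from $a$. And matching the true rays to the restricted geodesics (for $\tilde c_a$ as well as $\tilde c_b$) needs a pinning slab event on each leg. The absence of backtracking or exiting the region is not enough on its own.
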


\begin{proof} 
{ For the first case, let $d(\pi_{\xi}(a), \pi_{\xi}(b)) \sim |[1:\xi:a: b]| =N$, say.

\begin{figure}
    \centering
    \includegraphics[width=0.8\linewidth]{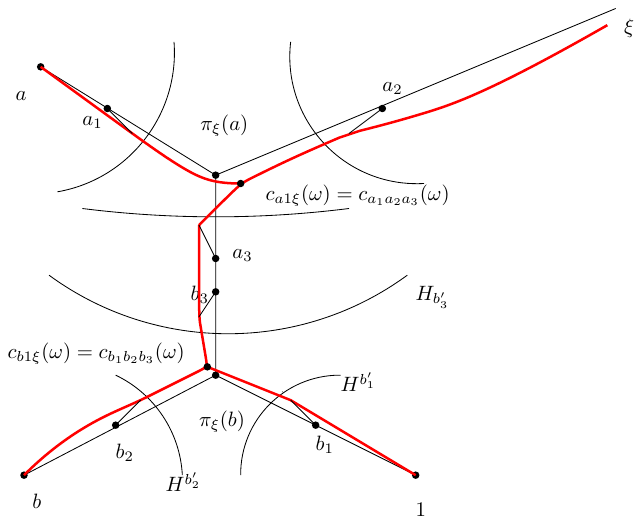}
    \caption{Proof of the first case in Proposition \ref{prop-4corr}: the points $b_1,b_2,b_3$ are at distance $N/4$ from $\pi_{\xi}(b)$ on $[1,\pi_{\xi}(b)], [b,\pi_{\xi}(b)]$ and $[\pi_{\xi}(a), \pi_{\xi}(b)]$ respectively. Points $b_i'$ (not marked in the figure) are midpoints of $[b_i,\pi_{\xi}(b)]$ and $H^{b'_i}$ are the hyperplanes passing through $b'_i$ perpendicular to these line segments. We show that with high probability the coalescence points $c_{b1\xi}(\omega)$ equals a random point $c_{b_1b_2b_3}(\omega)$ which depends only on the edge lengths near the points $\pi_{\xi}(b)$. A similar proxy $c_{a_1a_2a_3}(\omega)$ is constructed for $c_{a1\xi}(\omega)$. The near-independence follows since $\pi_{\xi}(a)$ and $\pi_{\xi}(b)$ are far away one another.}
    \label{f:l2g}
\end{figure}

\noindent{\bf{A local-to-global argument:}} see Figure \ref{f:l2g} for an illustration of this argument.  Consider the points $b_1,b_2,b_3$ at distance $N/4$ from $\pi_{\xi}(b)$ on $[1,\pi_{\xi}(b)], [b,\pi_{\xi}(b)]$ and $[\pi_{\xi}(a), \pi_{\xi}(b)]$ respectively. Consider hyperplanes $H^{b_1}, H^{b_2}, H^{b_3}$ passing through $b_1, b_2,b_3$ and perpendicular to $[1,\pi_{\xi}(b)], [b,\pi_{\xi}(b)]$ and $[\pi_{\xi}(a), \pi_{\xi}(b)]$ respectively. Consider the random "geodesic" $\gamma_{b_1b_3}$ between $b_1$ and $b_3$ where the minimum is taken over all paths contained in the region between $H^b_1$ and $H^{b_3}$. For $i=1,2,3$, let $b'_i$ denote the midpoint of $[b_i,\pi_{\xi}(b)]$. and let $H^{b'_i}$ denote the hyperplane through $b'_i$ perpendicular to $[b_i,b'_i]$. The coalescence argument in Theorem \ref{bmthmcoalesceeff} implies that on an event of probability $\ge 1-e^{-cN}$ we have that $[b,\xi)_{\omega}$ and $\gamma_{b_1b_3}$ agree in the region between $H^{b'_1}$ and $H^{b'_3}$. Constructing $\gamma_{b_2b_3}$ similarly and denoting the coalescence point of $\gamma_{b_1b_3}$ and $\gamma_{b_2b_3}$ by $c_{b_1b_2b_3}(\omega)$ it follows from Theorem \ref{thm-centroidexpdecay} that on an event of probability $\ge 1-e^{-cN}$, $c_{b_1b_2b_3}(\omega)=c_{b1\xi}(\omega)$. The point of this construction is that the random point $c_{b_1b_2b_3}(\omega)$ depends only on the local randomness around $\pi_{\xi}(b)$; more precisely, it only depends on the edge weights in the union of the region between $H^{b_1}$ and $H^{b_3}$ and the region between $H^{b_2}$ and $H^{b_3}$.
An essentially identical construction around the point $\pi_{\xi}(a)$ yields the existence of a random point $c_{a_1a_2a_3}(\omega)$ such that on an event of probability $\ge 1-e^{-cN}$, $c_{a_1a_2a_3}(\omega)=c_{a1\xi}(\omega)$ and $c_{a_1a_2a_3}(\omega)$ is independent of $c_{b_1b_2b_3}(\omega)$ (we used $d(\pi_{\xi}(a), \pi_{\xi}(b))=N$ and the fact that edge weights in disjoint regions are independent for the final conclusion). This completes the proof of the first case of the theorem.}


For the second case,  $d(c_{a1\xi},c_{b1\xi}) \leq c$. This is because 
\begin{enumerate}
\item $d(\pi_\xi(a), c_{a1\xi})$ and $d(\pi_\xi(b), c_{b1\xi})$ are uniformly bounded in terms of $\delta$ (a general fact about hyperbolic spaces), and
\item $d(\pi_\xi(a), \pi_\xi(b))\leq C_0$ by Lemma~\ref{lem-xratio}.
\end{enumerate} 
By Theorem~\ref{thm-centroidexpdecay}, $X_{a1\xi}\leq N/2$ with probability
at least $(1-e^{-\kappa N})$, i.e. $d(c_{a1\xi},c_{a1\xi}(\omega))\leq N/2$ with probability
at least $(1-e^{-\kappa N})$. The same is
true for  $d(c_{b1\xi},c_{b1\xi}(\omega))$. Hence, there exists $C_1=C_1(\delta)$ such that $\P[d(c_{a1\xi}(\omega), c_{b1\xi}(\omega))\geq N+C_1] \leq e^{-\kappa N} $. Choosing a slightly smaller $\kappa$ furnishes the conclusion.
\end{proof}


It is now easy to generalize Proposition~\ref{prop-4corr} to $(n+1)-$point functions. Let $a_1,\cdots, a_n, \xi$ be distinct points on $\pG$.
Let $\WW\HH(a_1,\cdots, a_n, \xi)$ denote the weak hull of $a_1,\cdots, a_n, \xi$, i.e.\ $\WW\HH(a_1,\cdots, a_n, \xi)$ consists of the union of bi-infinite
geodesics $(u,v)$ for $u, v \in \{a_1,\cdots, a_n, \xi\}$. There exists $C_1$ depending only on $\delta$ such that $\WW\HH(a_1,\cdots, a_n, \xi)$ is $C_1-$quasiconvex. Note that $C_1$ is independent of $n$.
Further, there exists a tree $\TT(a_1,\cdots, a_n, \xi)\subset \Gamma$ with ideal vertices
at $a_1,\cdots, a_n, \xi$ such that $\TT(a_1,\cdots, a_n, \xi)$ is a $\delta \log_2 n$ approximation of  $\WW\HH(a_1,\cdots, a_n, \xi)$ \cite[Chapter 2]{GhH}, i.e.\ 
\begin{enumerate}
\item $\WW\HH(a_1,\cdots, a_n, \xi)$ and $\TT(a_1,\cdots, a_n, \xi)$ lie within
a $\delta \log_2 n$ neighborhood of each other
\item the nearest-point projection of $\WW\HH(a_1,\cdots, a_n, \xi)$ onto $\TT(a_1,\cdots, a_n, \xi)$ has pre-images of points having diameter bounded above by  $\delta \log_2 n$.
\end{enumerate}

{We equip $\TT(a_1,\cdots, a_n, \xi)$ with a new simplicial structure, where $a_1,\cdots, a_n, \xi$ are the ideal vertices. Additional vertices $z$ in the 
interior of $\TT(a_1,\cdots, a_n, \xi)$ are introduced, if and only there are three geodesic paths with disjoint interiors of the
form $[z, p_j)$,  with $p_j \in \{a_1,\cdots, a_n, \xi\}$, $ j=1,2,3$.} Now, let $a, b$ be any two adjacent interior vertices of $\TT(a_1,\cdots, a_n, \xi)$. By Theorem~\ref{thm-btree}, for a.e.\ $\omega$, there exists a subtree
$\TT(a_1,\cdots, a_n, \xi)(\omega)$ of $T(\xi,\omega)$ with ideal vertices 
at $a_1,\cdots, a_n, \xi$. Let $a(\omega), b(\omega)$ denote the points on 
$\TT(a_1,\cdots, a_n, \xi)(\omega)$ corresponding to $a, b$. Here we necessarily
assume that $d(a,b)\gg 0$ so that such a correspondence is well-defined with probability close to 1.
Then a reprisal of the argument in  Proposition~\ref{prop-4corr} furnishes the following consequence:

\begin{cor}\label{cor-npoint} There exists $\kappa > 0$ such that the following holds.
Let $a_1,\cdots, a_n, \xi$ be distinct points in $\pG$ and $a, b$ adjacent interior vertices of $\TT(a_1,\cdots, a_n, \xi)$, such that $d(a,b)\gg 0$. Then
\begin{enumerate}
\item $\P[d(a(\omega), b(\omega))\geq d(a,b) + N]\leq e^{-\kappa N}$,
\item  $\P[d(a(\omega), b(\omega))\leq d(a,b) - N]\leq e^{-\kappa N}$
\end{enumerate}

\end{cor}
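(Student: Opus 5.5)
Each interior vertex of $\TT(a_1,\cdots, a_n, \xi)$ is a tripod centroid, so we reduce both bounds to Theorem~\ref{thm-centroidexpdecay}. By construction $a$ has three rays $[a,p_1),[a,p_2),[a,p_3)$ with disjoint interiors and $p_j\in\{a_1,\cdots,a_n,\xi\}$; we choose the labelling so that $p_3=\xi$ if $\xi$ is reachable from $a$ without passing through $b$, and otherwise take the $p_j$ so that exactly one of the three rays passes through $b$. Do the same for $b$ with ideal points $q_1,q_2,q_3$. Then $a$ lies within a uniform distance (depending only on $\delta$, up to the $\delta\log_2 n$ tree approximation) of the deterministic centroid $c_{p_1p_2p_3}$ of the corresponding ideal triangle, and likewise for $b$. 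In $T(\xi,\omega)$ the point $a(\omega)$ is the branch point of the random subtree spanned by $p_1,p_2,p_3$; since $T(\xi,\omega)$ is a tree with complete ends (Theorem~\ref{thm-btree}), this is the coalescence point of the random bi-infinite geodesics towards $\xi$, i.e.\ a random centroid $c_{p_ip_j\xi}(\omega)$ when $\xi\in\{p_j\}$. In general it is the meeting point of the tripod of random geodesics $(p_i,p_j)_\omega$ inside the tree. We need the analogue of Theorem~\ref{thm-centroidexpdecay} for such a branch point. Its proof only uses Theorem~\ref{bmthmcoalesceeff} along the three legs and Proposition~\ref{lem-effbt}, and these apply equally to random geodesics towards any ideal point, so the same argument should go through.

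The estimate itself is then a triangle inequality. Write $X_a=d(a,a(\omega))$ and $X_b=d(b,b(\omega))$. By the previous step, $\P[X_a\ge N/2]\le e^{-cN/2}$ and similarly for $X_b$, with constants independent of $n$ and of the points. On the complement of these two events,
\[
|d(a(\omega),b(\omega))-d(a,b)|\le X_a+X_b+C_2< N+C_2,
\]
where $C_2$ accounts for the coarse identification of $a,b$ with centroids. A union bound gives $2e^{-cN/2}$. Replacing $N$ by $N-C_2$ and shrinking $\kappa$ yields both (1) and (2) for $N\ge N_0$; for small $N$ the bounds are trivial after decreasing $\kappa$, or we simply restrict to large $N$, which is implicit in the statement.

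The main obstacle is the correspondence $a\mapsto a(\omega)$. We must check that the branch point of the random tree associated to the triple $(p_1,p_2,p_3)$ is the vertex adjacent to $b(\omega)$ in the combinatorial structure of $\TT(\ldots)(\omega)$, so that the combinatorics of the random tree match the deterministic one. I would handle this as in the local-to-global argument of Proposition~\ref{prop-4corr}. Since $d(a,b)\gg0$, with probability $1-e^{-c\,d(a,b)}$ the random branch points lie in disjoint balls around $a$ and $b$, and in this case the combinatorial types agree. On the exceptional event the correspondence is not defined, and that event is excluded by the standing assumption $d(a,b)\gg0$.
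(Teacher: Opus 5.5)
Your overall architecture is the same as the paper's: concentrate $a(\omega)$ and $b(\omega)$ around $a$ and $b$ via Theorem~\ref{thm-centroidexpdecay}, then use the triangle inequality and a union bound, with a local-to-global argument for the matching of combinatorics. Your second paragraph is fine.

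The gap is in the first step, in your ``otherwise'' case. That case arises for every adjacent pair: if $b\in[a,\xi)$, then $\xi$ is reachable from $a$ only through $b$. There you drop $\xi$ and describe $a(\omega)$ as the meeting point of a tripod of random geodesics $(p_i,p_j)_\omega$ inside $T(\xi,\omega)$. You then appeal to an unproved analogue of Theorem~\ref{thm-centroidexpdecay}.

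This misdescribes the random tree. $T(\xi,\omega)$ is a union of rays towards $\xi$. So for $p_i,p_j\neq\xi$, the tree path between them is the two rays $(p_i,\xi)_\omega$ and $(p_j,\xi)_\omega$ glued at their coalescence point. It is not the $\omega$-geodesic $(p_i,p_j)_\omega$, which need not lie in $T(\xi,\omega)$; moreover, three such bi-infinite geodesics need not pass through a common point. The tree median of three ends of $T(\xi,\omega)$ is the \emph{first} coalescence among the three rays to $\xi$, i.e.\ whichever of the $c_{p_ip_j\xi}(\omega)$ occurs first. So the quantity you must control is a coalescence point towards $\xi$. Your ``the same argument should go through'' is asserted for a different object, which is not in general well-defined.

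The missing observation is that you can \emph{always} take $\xi$ as one of the three legs at an interior vertex. The definition only asks for three rays with disjoint interiors, and $[a,\xi)$ is allowed to pass through $b$.

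Concretely, as in the paper, assume $b\in[a,\xi)\subset(a_1,\xi)$. Pick ideal vertices $a_2,a_3$ with $(a_2,a]\cap(a_1,\xi)=\{a\}$ and $(a_3,b]\cap(a_1,\xi)=\{b\}$, so that $a,b$ are the two interior vertices of $\TT(a_1,a_2,a_3,\xi)$. Then $a$ is coarsely the centroid of $(a_1,a_2,\xi)$ and $b$ is coarsely that of $(a_1,a_3,\xi)$. The random counterparts are honest random centroids: $a(\omega)=c_{a_1a_2\xi}(\omega)$ and $b(\omega)=c_{a_1a_3\xi}(\omega)$. With this choice the correspondence $a\mapsto a(\omega)$ is explicit.

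Theorem~\ref{thm-centroidexpdecay} has constants uniform over $\partial^3G$. It directly gives $\P[d(a(\omega),a)\geq N/2]\leq e^{-cN/2}$, and likewise for $b$. Your triangle-inequality paragraph then finishes the proof, with no new centroid estimate needed.
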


\begin{proof}
Without loss of generality we can assume that in the tree   $\TT(a_1,\cdots, a_n, \xi)$, the geodesic $[a,\xi)$ contains the vertex $b$. We can further assume without loss of generality that $[a,\xi)\subset (a_1,\xi)$. Since
$a, b$ are interior vertices, there exist ideal vertices ($a_2, a_3$ say), such that 
$(a_2,a] \cap  (a_1,\xi) = \{a\}$ and $(a_3,b] \cap  (a_1,\xi) = \{b\}$. 
{Hence $a, b$ are the unique internal vertices of the subtree 
   $\TT(a_1, a_2, a_3, \xi)$ of 
   the tree   $\TT(a_1,\cdots, a_n, \xi)$. To prove the Corollary, it suffices to consider 
   $\TT(a_1,\cdots, a_n, \xi)$.}
   
Thus we have reduced the problem to the case of a 4-point function with vertices $a_1,a_2,a_3,\xi$.
The Corollary now is a consequence of the second case of Proposition~\ref{prop-4corr} above.
{Indeed the second case of Proposition~\ref{prop-4corr} above guarantees that 
 $\P(d(a(\om), a) \leq N/2)$, $\P(d(b(\om), b) \leq N/2)$ have exponentially decaying tails (in $N$).
 Finally, a reprise of the local-to-global argument in Proposition~\ref{prop-4corr} for large $N/2$ balls around $a, b$ furnishes the conclusion. }
\end{proof}

\subsection{Proof of Proposition \ref{prop-bivel}}
\label{s:bivel}
As promised in the previous section, we shall now provide the proof of Proposition \ref{prop-bivel}.

\begin{proof}[Proof of Proposition \ref{prop-bivel}]
 Let $\partial_0 G \subset \partial G $ denote the full measure subset (with respect to the Patterson-Sullivan measure) for which
	the velocity exists in the sense of Theorem~\ref{thm-vexists} and is equal to the constant $v$. Replacing
	$\partial_0 G$ by $\bigcap_{g \in G} g.\partial_0 G$ if necessary, we may assume without loss of generality that $\partial_0 G$ is $G-$invariant. Let  $\partial^2_0 G =  (\partial_0 G \times  \partial_0 G) \cap \partial^2 G$. We claim that $\partial^2_0 G$ provides the full measure subset demanded by the Proposition.
	
	By $G-$invariance of $\partial^2_0 G$, we can assume without loss of generality that $ (x,y)$ passes through $1 \in G$. Now, as in \cite[Section 7.1]{BM19}, construct hyperplanes along $ (x,y)$ so that the two hyperplanes closest to $1$ cut $(x,y)$ at $D/2$ from 1. 
	
	Let $\Sigma_1(x,y)$ denote the collection of paths from $x$ to $y$
	passing through $1$. Then the velocity (as in Definition~\ref{def-vel} and \ref{def-bivel}) restricted to paths in $\Sigma_1(x,y)$ 
	equals $v$.  This is because we are simply
	concatenating paths from $1$ to $x$ and $1$ to $y$ in this case. Hence $v(x,y) \leq v$. We want to show that it cannot be strictly less.

	Let $x_n \in [1,x)$ be such that $d(x_n,1)=n$. Let $y_n \in [1,y)$ be such that $d(y_n,1)=n$.  It suffices to show that for any $\epsilon>0$,
	$$\liminf_{m,n\to \infty} \frac{\E T(x_{m},y_n)}{m+n}\ge v-\epsilon.$$
	
	Consider the event $A(M,N)$ that consists of all
	$\omega \in \Omega$ such that
	\begin{enumerate}
		\item the $\omega-$geodesic from $x_m$ to $y_n$ passes through $B_M(1)$,
		\item passage time on any edge contained in  $B_M(1)$ is bounded above by $N$.
	\end{enumerate}
	Notice next that, for $m,n\gg M$, on the event $A(M,N)$, by considering a point $v\in B_{M}(1)$ that is on the $\omega$-geodesic between $x_m$ and $y_n$ and considering a path from $1$ to $v$ contained in $B_M(1)$ we get 
	$$T(x_m,y_n)\ge T(1,x_m)+T(1,y_n)-2NM.$$
	It follows that
	\begin{eqnarray*}
	\E T(x_m,y_n) &\ge \E T(x_{m},y_{n})1_{A(M,N)} \ge\\  \E T(1,x_m)+\E T(1,y_n) &-2NM-\E [T(1,x_m)+T(1,y_n)]1_{A(M,N)^c}.
	\end{eqnarray*}
	{It follows from Theorem \ref{thm-vexists} and Theorem \ref{t:linvar} that 
	$\E [(T(1,x_m)+T(1,y_n))^2]=O((m+n)^2)$ and by Cauchy-Schwarz inequality, it follows that} 
	$$\E [T(1,x_m)+T(1,y_n)]1_{A(M,N)^c}=(m+n)O(\P(A(M,N)^c)^{1/2}).$$
	{Given $\epsilon$, we first choose $M$ sufficiently large such that Proposition \ref{prop-effbt} guarantees that the probability of the complement of the first event in the definition of $A(M,N)$ is $\ll \epsilon^2$. Now fix $N\gg M$ such that the probability of the complement of the second event in the definition of $A(M,N)$ is also $\ll \epsilon^2$. This is ensured by Assumption \ref{assume-subexp} and the fact that the number of edges in $B_M(1)$ is uniformly bounded as a function of $M$.} Together, these imply $\P(A(M,N)^c)^{1/2}\ll \epsilon$. The proof can now be completed by sending $m,n\to \infty$, and invoking Theorem \ref{thm-vexists}. 
%
%
%
%
%
%
\end{proof}

\section{Properties of the backward tree}
\label{sec-btree1}

In this section we shall study some properties of the backward geodesic tree $T(\xi,\omega)$ for a fixed $\xi$. In particular, we shall study the backward subtrees of $T(\xi,\omega)$ rooted at a given vertex $v\in \Ga$, i.e., the set of vertices $z$ such that $[z,\xi)_{\omega}$ passes through $v$.

\subsection{Backward trees starting from points on a horosphere}\label{sec-backcorr} 
Let $[1, \xi)$ be a geodesic ray, and $\LL$ denote the horosphere through $1$ based at $\xi$, i.e.\ $\LL$ is the level set of the Busemann function given by the condition $1 \in \LL$. Let $\DD_\xi$ denote the corresponding horoball.
Given any $a \in G= V (\Gamma)$ outside $\DD_\xi$ and a geodesic $[a,\xi)$, let {$\Pi(a) =[a,\xi) \cap \LL $}. By quasiconvexity of $\DD_\xi$, $\Pi(a)$ is coarsely well-defined; in fact, for any such $a$ and
any two geodesic rays, $[a,\xi)_1$, $[a,\xi)_2$, $d([a,\xi)_1 \cap \LL, [a,\xi)_2 \cap \LL)$ is at most $2\delta$ if $\Gamma$ is $\delta-$hyperbolic.
However, for $h \in \LL$, the collection of bi-infinite geodesics through $h$ forward asymptotic to $\xi$ is a well-defined set. 
{Hence, $\Pi^{-1}(h)$ is well-defined
(and not just coarsely defined), as it consists of all $a$ such that \emph{some} geodesic ray
$[a,\xi)$ passes through $h$.}

 Recall that $T(\xi,\omega)$ is the backward (random) tree.
 For all $h \in \LL$, let $T(\xi,h,\omega)$ denote the component of 
 $T(\xi,\omega)\setminus \mbox{Int}(\DD_\xi)$ containing $h$.
 The aim of this section is to explore quantitatively the relation between  backward trees 
 $T(\xi,h_1,\omega)$ and $T(\xi,h_2,\omega)$ with distance $d(h_1, h_2)$ for 
 $h_1, h_2 \in \LL$.
 We start with the following estimate.
 
 \begin{lemma}\label{lem-decorr-tree}
 Given $c>0$, there exists $D_0 > 0$ such that for $D>D_0$, the following holds. The probability that there exists
   $a \in \Pi^{-1}(h_1) $ such that $a \in T(\xi,1,\omega) $ is at most $e^{-cd(h_1, 1)}$ for $d(h_1, 1)\geq D_0$.
 \end{lemma}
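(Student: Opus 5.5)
The plan is to reduce the statement to the ``geodesic wanders far from its deterministic shadow'' estimate of Proposition~\ref{lem-effbt}, together with the coalescence and backtracking bounds of Theorem~\ref{bmthmcoalesceeff}. Set $L=d(h_1,1)$. Suppose $a\in\Pi^{-1}(h_1)$ lies in $T(\xi,1,\omega)$. Then the random ray $[a,\xi)_\omega$ reaches $\LL$, or more precisely exits $\Gamma\setminus\mathrm{Int}(\DD_\xi)$, through the component containing $1$. Hence it passes through a vertex $w$ of $T(\xi,1,\omega)$ outside $\mathrm{Int}(\DD_\xi)$, joined to $1$ inside that component, and eventually follows $[1,\xi)_\omega$. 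The deterministic ray $[a,\xi)$, however, passes through $h_1$, which is at distance $L$ from $1$. So we aim to show that, with probability $1-e^{-cL}$, every random ray $[a,\xi)_\omega$ with $a\in\Pi^{-1}(h_1)$ crosses $\LL$ (or enters $\DD_\xi$) within distance $L/3$ of $h_1$. We also aim to show that $[1,\xi)_\omega$ together with the component of $T(\xi,\omega)\setminus\mathrm{Int}(\DD_\xi)$ containing $1$ stays near $1$ on the horosphere scale, in the sense that the first entrance points into $\DD_\xi$ lie within $L/3$ of $1$.

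\textbf{Step one.} Consider any point $w\in[a,\xi)_\omega$ where the random ray first meets $\LL$, or first enters $\DD_\xi$. The nearest point projection of $w$ onto the deterministic geodesic $[a,\xi)$ (extended to a bi-infinite geodesic through $h_1$ if needed) lies within a uniform constant of the geodesic. If $d(w,h_1)\ge L/3$, one of two things happens. Either $w$ projects to a point $q$ on $[a,\xi)$ with $d(w,q)\gtrsim L/3$, up to $\delta$-constants using quasiconvexity of the horoball; this is the event $A(q,C)$ of Proposition~\ref{lem-effbt} with $R\sim L/3$. Or $w$ projects near a point $q$ of $[a,\xi)$ far beyond $h_1$ inside $\DD_\xi$, or far before it outside $\DD_\xi$, while $w$ lies on $\LL$; in that case the ray has backtracked across many hyperplanes perpendicular to $[h_1,\xi)$, which is controlled by part (2) of Theorem~\ref{bmthmcoalesceeff}. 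The union over the projection point $q$ is taken along the geodesic. Summing $e^{-c'(L+d(q,h_1))}$ over $q$ gives a bound $e^{-c''L}$. The union over $a$ is not needed separately, because Proposition~\ref{lem-effbt} is already uniform over all $u,v$; we use $v\to\xi$ via limits.

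\textbf{Step two.} Run the same argument for points $a'\in\Pi^{-1}(1)$ to control the component of $1$. The tree component $T(\xi,1,\omega)$ consists of rays that, after leaving the horoball complement, coalesce with $[1,\xi)_\omega$. A ray from $a$ landing in that component must meet $[1,\xi)_\omega$ outside $\mathrm{Int}(\DD_\xi)$, at a point within $O(L/3)$ of $1$ except on an event of probability $e^{-cL}$, by Proposition~\ref{prop-effbt} and Proposition~\ref{lem-effbt} applied to $[1,\xi)$. On the other hand, Step one shows that the ray from $a$ stays within $L/3$ of $h_1$ near $\LL$. These two requirements are incompatible since $d(1,h_1)=L$, and choosing $D_0$ large absorbs the additive $\delta$-constants.

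\textbf{Main obstacle.} The difficult part is the uniformity over the infinite set $\Pi^{-1}(h_1)$ and over the unbounded horoball geometry. We must make sure that the exceptional event does not depend on $a$. This is why the plan leans on the ``there exist $u,v,w$'' form of Proposition~\ref{lem-effbt}, together with a careful check that leaving $\LL$ far from $h_1$ forces a far-from-projection configuration rather than merely a long excursion along the horosphere. If that reduction is delicate, the fallback is to use hyperplanes perpendicular to $[h_1,\xi)$ and $[1,\xi)$ at distance $L/3$ beyond $\LL$. These hyperplanes are disjoint for $L\ge D_0$, and we can argue via Theorem~\ref{bmthmcoalesceeff}(2) that crossing from one family to the other costs $p^{L/D}$.
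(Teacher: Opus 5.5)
Your argument works in outline, but it is organized differently from the paper's. The paper uses a single application of Proposition~\ref{lem-effbt}. Since $1$ and $h_1$ lie on the same horosphere, $d(1,[h_1,\xi))\ge \frac12 d(h_1,1)-2\delta$. All rays from $h_1$ to $\xi$ fellow-travel, so the projection of $1$ to $[h_1,\xi)$ is coarsely a fixed point $o$, independent of $a\in\Pi^{-1}(h_1)$. A random ray $[a,\xi)_\omega$ through $1$ is then a far-from-projection configuration at $o$ with $R\approx L/2$, and uniformity in $u,v$ handles all $a$ at once.

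That proof tacitly reads $a\in T(\xi,1,\omega)$ as ``$[a,\xi)_\omega$ passes through $1$''. Membership in the component only forces $[a,\xi)_\omega$ to merge with $[1,\xi)_\omega$ at some $m$ before the latter enters $\mathrm{Int}(\DD_\xi)$. Your Step two is exactly what shows this happens near $1$. So your route treats the component more carefully, at the cost of a second localization and more union bounds.

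Some tidying is needed:
\begin{itemize}
\item The backtracking branch of Step one is neither needed nor accurate. A ray reaching $w$ with a deep projection need not backtrack, and Theorem~\ref{bmthmcoalesceeff}(2) is for a single fixed starting point. For $w\in\LL$ and $q$ on the deterministic ray at distance $t$ from $h_1$ (on either side), the Busemann function gives $d(w,q)\ge t-O(\delta)$. Hence $d(w,q)\gtrsim\max(t,L/6)$, so you are always in the far-from-projection case. Proposition~\ref{prop-effbt} is also superfluous.
\item The contradiction must be drawn at the last vertex $e$ of $[1,\xi)_\omega$ before it enters $\mathrm{Int}(\DD_\xi)$. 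Since $[1,m]_\omega$ avoids $\mathrm{Int}(\DD_\xi)$, $e$ comes after $m$, so $e\in[a,\xi)_\omega\cap\LL$. The \emph{first} point where $[a,\xi)_\omega$ meets $\LL$ may be near $h_1$ even if the ray then runs outside the horoball to near $1$. So Step one must be stated for every point of $[a,\xi)_\omega$ on $\LL$; your projection argument already supports this.
\item The union over $q$ is not along one geodesic. On the $[a,h_1]$ side, $q$ ranges over $\bigcup_a[a,h_1]$, with up to $C\lambda^s$ points at distance $s$. You therefore need $c'>\log\lambda$, which the remark following the lemma permits.
\end{itemize}

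The cleanest fix is to do Step two first and then apply Proposition~\ref{lem-effbt} only at $e$. Since $d(e,1)\le L/3$, the projection $q$ of $e$ lies on $[h_1,\xi)$ at depth $\asymp L$, with $d(e,q)\gtrsim L/3$. The union is then over $O(L)$ points of one reference ray. This is the paper's argument with $e$ in place of $1$.
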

 
 \begin{proof}
 Note first that by Gromov-hyperbolicity $d(1, [h_1,\xi)) > \frac{1}{2}d(h_1, 1) -2\delta $. Let $w \in  [h_1,\xi) $ denote a nearest-point projection of $1$, i.e.\ $d(1, [h_1,\xi)) = d(1,w)=D_1$, say. Thus, {$D_1  \geq \frac{1}{2}d(h_1, 1)-2\delta $}. Proposition~\ref{lem-effbt} now furnishes the conclusion.
 \end{proof}

\begin{rmk}
Note that the constant $c$ in Proposition~\ref{prop-effbt} and Lemma~\ref{lem-decorr-tree} can be made as large as we want at the cost of increasing the threshold distance $R_0$.
\end{rmk}

\begin{cor}\label{cor-decorr-tree}
Given $c>0$, there exists $D_0 > 0$ such that for $D>D_0$, the following holds. The probability
 that there exists $h_1$ with $a \in \Pi^{-1}(h_1) $, such that $a \in T(\xi,1,\omega) $ with $d(h_1 , 1) \geq D$ is at most $e^{-cD}$.
\end{cor}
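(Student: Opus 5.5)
The plan is to derive the corollary from Lemma~\ref{lem-decorr-tree} by a union bound over all $h_1\in\LL$ with $d(h_1,1)\geq D$, paying for the exponential growth of the horosphere with a larger decay constant. The remark following Lemma~\ref{lem-decorr-tree} makes this possible: the constant in that lemma (inherited from Proposition~\ref{lem-effbt}) can be taken as large as we like, at the cost of raising the threshold.

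First I will fix the target constant $c>0$ and apply Lemma~\ref{lem-decorr-tree} with a larger constant $c'=c+\log|S|+1$, where $|S|$ is the size of the generating set. This produces a threshold $D_0'$ such that for each $h_1\in\LL$ with $d(h_1,1)=r\geq D_0'$,
\[
\P\bigl(\exists\, a\in \Pi^{-1}(h_1)\cap T(\xi,1,\omega)\bigr)\leq e^{-c' r}.
\]
Second, I will count points. The number of $h_1\in\LL$ with $d(h_1,1)=r$ is at most the size of the sphere of radius $r$ in $\Gamma$, which is at most $|S|^r$. Summing over $r\geq D$ then gives
\[
\sum_{r\geq D}|S|^r e^{-c' r}\leq \sum_{r\geq D} e^{-(c+1)r}\leq C e^{-(c+1)D},
\]
which is at most $e^{-cD}$ once $D\geq D_0$ for a suitable $D_0\geq D_0'$.

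The step needing the most care is that the event in the corollary quantifies over all $h_1$ simultaneously. Each $a$ lies in $\Pi^{-1}(h_1)$ for some $h_1$ in the $2\delta$-coarse image $\Pi(a)$, so the event is indeed a countable union of the events controlled by Lemma~\ref{lem-decorr-tree}. The union bound is therefore legitimate; at worst one works with slightly enlarged fibres, which only changes constants. If the boosted constant from the remark were not fully uniform, I would instead apply Proposition~\ref{lem-effbt} directly with its constant chosen larger than the volume-growth exponent of $\Gamma$.
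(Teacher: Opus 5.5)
Your proposal is correct and follows the paper's own proof: a union bound over $h_1\in\LL$ with $d(h_1,1)=r\geq D$, using that Lemma~\ref{lem-decorr-tree} holds with an arbitrarily large constant, uniformly in $h_1$, so that it beats the exponential count of such $h_1$. Your bound $|S|^r$ on the sphere does the same job as the paper's appeal to the growth rate of $\Gamma$; the paper's displayed count $CL^\alpha$ should be read as exponential in $L$.
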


\begin{proof}
	{Note first that $\{g \in \LL: d(g,1) = L \}$ has cardinality at most $C L^\alpha$ for some $C \geq 1, \alpha >0$ depending only on $\Gamma$ (we might as well choose $\alpha$ to equal the exponential growth rate of $\Gamma$ itself
    \cite{coornert-pjm}).}
The Corollary now follows from Lemma~\ref{lem-decorr-tree} by taking a union bound over
$d(h_1, 1) \in [D, \infty)$ after choosing $c$ (and hence $D_0$) large enough to start with.
\end{proof}

Let $d_\LL$ denote the intrinsic metric on the horosphere $\LL$. (Here, we assume that $G$ is one-ended; so that $\LL$  is coarsely connected, i.e.\ after possibly thickening $\LL$ slightly, the latter is connected. The metric $d_\LL$ is then the metric induced on $\LL$ from the \emph{path-metric} in the thickening.) Then we have the following.

{
\begin{lemma}\label{lem-polyg}
Let $G$ be a one-ended Gromov hyperbolic group. Let $\LL$ be a horosphere based at $\xi \in \pG$. We equip the vertex set of $\LL$ with a metric $d_\LL$, where $d_\LL$ is the metric induced from 
the path metric on a $10\delta-$thickening of $\LL$.
Then
$(\LL,d_\LL) $ has polynomial growth.
\end{lemma}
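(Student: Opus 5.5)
We want to show that balls in $(\LL,d_\LL)$ of radius $r$ contain at most polynomially many (in $r$) points of $\LL$. The plan is to compare $d_\LL$ with the ambient word metric $d$ through the standard exponential distortion of horospheres, and then count lattice points in ambient balls.

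\emph{Step 1: a distortion estimate.} For $g,h\in\LL$ we have, up to additive constants depending only on $\delta$,
$$d(g,h)\approx 2\,d\big(\Pi_{gh},\LL\big),$$
where $\Pi_{gh}$ is the point where the rays $[g,\xi)$ and $[h,\xi)$ come $\delta$-close. Since both endpoints lie on the same level set of the Busemann function, the geodesic $[g,h]$ goes up roughly $d(g,h)/2$ towards $\xi$ and comes back down.

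\emph{Step 2: an upper bound on $d_\LL$.} We need $d_\LL(g,h)\le C e^{\lambda d(g,h)}$ for some $C,\lambda$ depending only on $\Gamma$. To get this, we connect $g$ to $h$ inside the $10\delta$-thickening of $\LL$ as follows. The rays $[g,\xi)$ and $[h,\xi)$ fellow travel after height $t\approx d(g,h)/2$, so we pick points $g_t,h_t$ on them at height $t$ with $d(g_t,h_t)\le 2\delta$. We then push the short path $[g_t,h_t]$ back down to $\LL$ along rays from $\xi$.

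One-endedness enters at exactly this point. It guarantees that $\LL$ is coarsely connected, i.e.\ that the $10\delta$-thickening is connected, and moreover that coarse paths at height $t$ can be projected to coarse paths in $\LL$ with length multiplied by at most $e^{\lambda t}$. The cleanest way to see this is an induction on height: two points of $\LL$ at ambient distance at most $K$ (with $K$ a fixed constant) are joined in the thickening by a path of uniformly bounded length, by cocompactness of the $G$-action together with connectedness of the thickening. One has to be careful here, since $G$ does not act on $\LL$ itself. Instead we apply cocompactness to pairs of points on varying horospheres, using that horospheres for different $\xi$ are all translates up to the limit set being compact. Each level of descent then multiplies path length by at most a fixed factor, which gives $d_\LL(g,h)\le C e^{\lambda d(g,h)}$.

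\emph{Step 3: counting.} Conversely, every $h$ with $d_\LL(1,h)\le r$ satisfies $d(1,h)\le C' r$. More usefully, Step 1 shows that such an $h$ has $[h,\xi)$ meeting the ray $[1,\xi)$ within $\delta$ at height at most $s$, where $s$ satisfies $e^{\lambda' s}\lesssim r$. This lower bound on $d_\LL$ in terms of $d$ is the genuinely delicate part: it is the statement that $d_\LL(g,h)\ge c\,e^{\epsilon d(g,h)}$, i.e.\ that horospheres are exponentially distorted. We prove it by noting that any path in the thickening of $\LL$ joining $g$ to $h$ can be projected via nearest-point retraction onto a horosphere at height $t$. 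This retraction contracts lengths by a factor of roughly $e^{-t}$, which is the standard hyperbolic contraction estimate in \cite{GhH}. Meanwhile the projected endpoints $g_t,h_t$ remain at distance at least $1$ for $t<d(g,h)/2-C$.

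Consequently the $d_\LL$-ball of radius $r$ is contained in the set of $h\in\LL$ whose ray to $\xi$ passes within $\delta$ of the ball $B_{2\delta}(x_s)$, where $x_s\in[1,\xi)$ and $s=O(\log r)$. The number of such $h\in\LL$ is at most the number of vertices in the ambient ball $B_{s+C}(x_s)$, which is at most $e^{v(s+C)}$ for $v$ the exponential growth rate of $\Gamma$. Since $s=O(\log r)$, this equals $O(r^{\alpha})$. This gives polynomial growth, consistent with the bound $CL^\alpha$ already used in Corollary~\ref{cor-decorr-tree}.

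The main obstacle is the exponential-distortion lower bound in Step 3, and in particular making the length-contraction of the retraction rigorous in a graph rather than a CAT$(-1)$ space. I would try first to do it combinatorially. Subdivide the path into pieces of $d$-length $e^{t}$, push each piece up by $t$, and use $\delta$-thinness to show that the images of consecutive subdivision points stay within bounded distance. This yields the bound $d_\LL\gtrsim e^{t}$.
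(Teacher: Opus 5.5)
Your argument is correct in outline but organized differently from the paper's. The paper only sketches a proof, deferring to \cite[Lemma 3.16]{lmm}: horospheres are limits of spheres $\partial B_n(1)$, and spheres with their intrinsic metric have polynomial growth uniformly in $n$. You work on $\LL$ directly. The distortion bound $d_\LL(g,h)\ge c\,e^{\epsilon d(g,h)}$ puts the $d_\LL$-ball of radius $r$ inside an ambient ball of radius $O(\log r)$. Theorem~\ref{thm-coorn} then gives at most $C r^{\alpha}$ points, with $\alpha$ of order $\log\lambda/\epsilon$.

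This route has three advantages. It avoids having to check that the sphere estimates are uniform enough to survive the limit. It uses the same distortion fact the paper calls well known in the proof of Theorem~\ref{thm-decorr-tree}. And it makes clear that the growth upper bound holds for any horosphere in any hyperbolic Cayley graph, with one-endedness needed only to make $d_\LL$ finite. Consequently your Step~2 (the upper bound on $d_\LL$, and the rather loose compactness sketch for coarse connectedness) is not needed and can be dropped; connectedness of the thickening is a standing assumption in the paper.

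The one step that needs repair is your proposed combinatorial proof of the distortion bound, which fails as stated. A subpath of length $e^{t}$ in the thickening can have endpoints at ambient distance close to $e^{t}$. Pushing two points of $\LL$ up by $t$ towards $\xi$ lowers their distance by at most $2t$, so the images of consecutive subdivision points need not stay boundedly close. Assuming instead that the endpoints are $O(t)$ apart would be circular. The contraction must be produced scale by scale.

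For instance, suppose consecutive points of a coarse path near a level set are $K$-close, with $K$ a large multiple of $\delta$. Pushing them up a further bounded height $s$ along their rays to $\xi$ lets you discard every other point while keeping $K$-closeness. After $\log_2\ell$ rounds the images of $g$ and $h$ are $K$-close at height $s\log_2\ell$. By the triangle inequality they are also at distance at least $d(g,h)-2s\log_2\ell$, which gives $\ell\ge 2^{(d(g,h)-K)/(2s)}$.

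Simpler still, use the divergence estimate $d(x,\mathrm{im}\,c)\le \delta\log_2\ell(c)+1$ for a path $c$ from $g$ to $h$ and $x\in[g,h]$ (Bridson--Haefliger, Prop.~III.H.1.6). Apply it to the midpoint $x$ of $[g,h]$. By your Step~1, $x$ lies at distance at least $d(g,h)/2-C$ from $\LL$, hence at distance at least $d(g,h)/2-C-10\delta$ from any path in the thickening. The exponential lower bound on $\ell(c)$ follows at once.
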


\begin{proof}
    The idea of the proof is contained in \cite[Lemma 3.16]{lmm}. The  basic idea is that
    horospheres are limits of spheres $\partial B_n(1):= \{x \in G: d(g,1)=n\}$ in a hyperbolic group, coupled with the fact that $\partial B_n(1)$ equipped with its intrinsic metric has polynomial growth.
\end{proof}}

Combining Corollary 5.3 and Lemma 5.4 gives the following superpolynomial decay estimate.



\begin{theorem}\label{thm-decorr-tree}
	Given $c>1$, there exists $D_0 > 0$ such that for $D>D_0$, the following holds. The probability
	that there exists $h_1$ with $a \in \Pi^{-1}(h_1) $, such that $a \in T(\xi,1,\omega) $ with $d_\LL(h_1 , 1) \geq D$ is at most $D^{-c}$.
\end{theorem}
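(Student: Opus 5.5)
Combine Corollary~\ref{cor-decorr-tree}, which gives exponential decay in the ambient distance $d(h_1,1)$, with the comparison between $d_\LL$ and $d$ on a horosphere. In a $\delta$-hyperbolic space the intrinsic horospherical distance is exponentially distorted: if $d(h_1,1)=r$ for $h_1,1\in\LL$, then $d_\LL(h_1,1)\le C_2e^{\lambda r}$ for constants $C_2,\lambda$ depending only on $\Gamma$. This holds because one can connect $h_1$ to $1$ by running along $[h_1,\xi)$ and $[1,\xi)$ until they are $\delta$-close, at depth roughly $r/2$. One then returns to $\LL$ along a coarse path whose horospherical length grows at most exponentially in the depth, since the horosphere is a limit of spheres. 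Contrapositively, $d_\LL(h_1,1)\ge D$ forces $d(h_1,1)\ge \lambda^{-1}\log(D/C_2)$.

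The proof then proceeds in three steps. First, I will establish the distortion inequality $d(h_1,1)\ge \lambda^{-1}\log d_\LL(h_1,1) - C_3$. I will take this from the same source as Lemma~\ref{lem-polyg}, namely \cite[Lemma 3.16]{lmm}, or prove it directly by a thin-triangle argument. Second, given $c>1$, I will apply Corollary~\ref{cor-decorr-tree} with the constant $c'=2\lambda c$, which is allowed by the remark that the constant can be made arbitrarily large at the cost of increasing $D_0$. This bounds the probability of the event $\{\exists h_1: d(h_1,1)\ge R,\ \Pi^{-1}(h_1)\cap T(\xi,1,\omega)\neq\emptyset\}$ by $e^{-c'R}$ for $R\ge R_0$. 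Third, I will set $R=\lambda^{-1}\log D - C_3$. The event in the theorem, where $d_\LL(h_1,1)\ge D$, is contained in the event of the second step with this $R$, so its probability is at most $e^{-c'R}=e^{c'C_3}D^{-2c}\le D^{-c}$ once $D$ is large. This determines $D_0$.

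Lemma~\ref{lem-polyg} can be used as an alternative or supplement. If one prefers to union bound directly over $h_1$ in the $d_\LL$-annuli, polynomial growth says that $\{h\in\LL: d_\LL(h,1)\in[2^j,2^{j+1})\}$ has cardinality at most $C2^{(j+1)\alpha}$. Each such $h$ contributes at most $e^{-c\,d(h,1)}\le C'd_\LL(h,1)^{-c/\lambda}$ by Lemma~\ref{lem-decorr-tree} and the distortion bound. Summing over $j\ge\log_2 D$ gives $O(D^{\alpha-c/\lambda})$, and choosing $c$ large yields $D^{-c}$. I would present this second route, since it is the one the text indicates by combining Corollary~\ref{cor-decorr-tree} with Lemma~\ref{lem-polyg}.

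The main obstacle is the distortion estimate: a lower bound on ambient distance that is logarithmic in horospherical distance, with uniform constants. One also needs the thickening in Lemma~\ref{lem-polyg} to make $d_\LL$ finite, which is where one-endedness enters. The rest is a union bound.
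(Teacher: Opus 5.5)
Your proposal is correct, and your first route (your three steps) is exactly the paper's proof. The paper combines Corollary~\ref{cor-decorr-tree} with the exponential distortion of horospheres: $d_\LL(h_1,1)\ge D$ implies $d(h_1,1)\ge \kappa^{-1}\log D$ for large $D$. Here $\kappa$ is your $\lambda$; note that your letter clashes with the growth rate $\lambda$ of Theorem~\ref{thm-coorn}. The paper then takes the exponential rate in the corollary to be a large multiple of $\kappa c$.

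The second route, which you say you would present, redoes the union bound in the intrinsic metric using Lemma~\ref{lem-polyg} and the pointwise bound of Lemma~\ref{lem-decorr-tree}. Its bookkeeping is fine: $C2^{(j+1)\alpha}$ points in each dyadic $d_\LL$-annulus against $2^{-jc/\kappa}$ per point. But it buys nothing. Corollary~\ref{cor-decorr-tree} has already done the union bound in the ambient metric, where the number of points of $\LL$ at ambient distance $L$ from $1$ is at most exponential in $L$ by Theorem~\ref{thm-coorn}. So polynomial growth of $(\LL,d_\LL)$ is not needed. Route two still needs the distortion estimate, and it additionally depends on Lemma~\ref{lem-polyg}, whose proof is only sketched. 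Despite the sentence preceding the theorem, the paper's proof does not actually use Lemma~\ref{lem-polyg}. Present the first route.

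Both routes rest entirely on the distortion estimate, which the paper also simply quotes as well known. Your heuristic for it is not a proof. The step of returning to $\LL$ ``along a coarse path whose horospherical length grows at most exponentially in the depth'' is the statement itself. No thin-triangle argument can supply it: in a tree all triangles are thin, yet any fixed thickening of a horosphere is disconnected.

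What is needed is one-endedness in a quantitative form. For instance, it suffices that two points of $\LL$ at ambient distance $r$ can be joined inside the thickening by a path staying in an ambient ball of radius $O(r)$. A shortest such path within that ball then has length at most the number of vertices of the ball, which is $e^{O(r)}$ by Theorem~\ref{thm-coorn}. Citing the fact, as the paper does, is acceptable, but do not offer your sketch as its proof.
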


{
\begin{proof}
    The only extra metric ingredient necessary is the well-known fact that horospheres are exponentially distorted, i.e.\ there exists $D_0 > 0, \kappa \geq 1$ such that for $D>D_0$, $d_\LL(h_1 , 1) \geq D$ implies
    $d(h_1 , 1) \geq \frac{1}{\kappa}\log D$.
\end{proof}}

\subsection{Growth of backward trees}\label{sec-lambda}
Let $ \LL_D(1)$ denote the $D-$neighborhood of $1$ in $(\LL,d_\LL)$. Let $\CC_D = \Pi^{-1}(\LL_D(1))$ (recall from Section~\ref{sec-backcorr} that $\Pi$ is the nearest point projection onto $\LL$).

 \subsubsection{Volume growth of $\CC_D$}
 We recall a basic theorem of Coornaert:
 
 \begin{theorem}\cite[Theorem 7.2]{coornert-pjm}\label{thm-coorn}
 There is a constant $C>1$ and $\lambda>1$ such that the cardinality $|B_n(1)|$ of the $n-$ball in $\Gamma$ satisfies
 $$\frac{1}{C} \lambda^n \leq |B_n(1)| \leq  C  \lambda^n,$$
 for all $n\in \natls$.
 \end{theorem}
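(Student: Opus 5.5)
This is Coornaert's theorem, cited from \cite{coornert-pjm}, so in the paper it is quoted rather than proved. If I had to reprove it, I would follow Coornaert's Patterson--Sullivan argument.

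The plan has four steps.

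\textbf{Step 1: exponential growth rate.} Submultiplicativity $|B_{m+n}(1)|\le |B_m(1)|\,|B_n(1)|$ follows from the group structure. By Fekete's lemma, $\lambda=\lim |B_n(1)|^{1/n}$ exists. Moreover $\lambda>1$ when $G$ is non-elementary, since $G$ contains a free subgroup of rank two. This step gives the lower bound $\lambda^n\le |B_n(1)|$ for free, because submultiplicativity forces $|B_n|\ge\lambda^n$.

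\textbf{Step 2: conformal density.} Build a Patterson--Sullivan measure $\nu$ on $\pG$ of dimension $\lambda$. Take weak limits, as $s\downarrow \log\lambda$, of the normalized Poincar\'e series measures $\sum_g e^{-s\,d(1,g)}\delta_g$. Use Patterson's slowly varying modification if the series converges at the critical exponent. The resulting $\nu$ is quasi-conformal: $d(g_*\nu)/d\nu \asymp e^{\log\lambda\,\beta_\xi(1,g)}$, where $\beta_\xi$ is the Busemann function at $\xi$.

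\textbf{Step 3: shadow lemma.} This is the key geometric input. Let $O(g)$ be the shadow in $\pG$ of a ball of fixed radius $R$ about $g$, as seen from $1$. The claim is $\nu(O(g))\asymp \lambda^{-d(1,g)}$ uniformly in $g$. To prove it, translate by $g^{-1}$ and apply quasi-conformality. On the translated shadow the Busemann cocycle equals $d(1,g)$ up to $O(\delta)$. It then remains to show that $g^{-1}O(g)$ has $\nu$-mass bounded below. For that, choose $R$ so large that the complement of the shadow lies in a small ball, and use that $\nu$ has no atoms, which holds because $G$ is non-elementary and $\nu$ has full support.

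\textbf{Step 4: counting.} Shadows of the spheres $S_n=\{g:d(1,g)=n\}$ cover $\pG$. They also have bounded multiplicity: the points of $S_n$ whose shadows contain a given $\xi$ lie near a common geodesic ray, so there are uniformly boundedly many. Summing the shadow estimate over $S_n$ gives $|S_n|\,\lambda^{-n}\asymp \nu(\pG)=1$, so $|S_n|\asymp\lambda^n$. Summing this geometric series over $k\le n$ then gives $|B_n(1)|\asymp \lambda^n$.

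I expect the lower bound in the shadow lemma to be the main obstacle. It requires uniform control of $\nu$ on complements of small balls via non-atomicity and compactness, and everything else is bookkeeping.
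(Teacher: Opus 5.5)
Your outline is Coornaert's own Patterson--Sullivan and shadow-lemma proof, which is the argument behind the citation. The paper gives no proof of its own and simply quotes \cite[Theorem 7.2]{coornert-pjm}, so there is no structural difference to discuss.

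One inference in Step 3 needs repair. Full support of $\nu$ on a boundary with at least two points does not imply that $\nu$ has no atoms: a measure such as $\sum_k 2^{-k}\delta_{q_k}$ with $\{q_k\}$ dense has full support.

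What the lower bound actually needs is only that $\nu$ is not a Dirac mass, and full support does give that. Suppose $\sup_{\eta\in\pG}\nu(B(\eta,r))\to 1$ as $r\to 0$. By compactness, a limit point of near-maximizing centres would then carry all of the mass, making $\nu$ a Dirac mass. Hence there are $r_0,c>0$ with $\nu(B(\eta,r_0))\le 1-c$ for every $\eta$. Now choose $R$ so that the complement of each translated shadow $g^{-1}O(g)$ has visual diameter less than $r_0$. That complement lies in a ball of radius $r_0$, so $\nu(g^{-1}O(g))\ge c$.

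If you prefer to keep non-atomicity, derive it from quasi-conformality rather than from full support. Let $g_n$ be the vertex of $[1,\eta)$ at distance $n$ from $1$. An atom at $\eta$ would force $\nu(\{g_n^{-1}\eta\})\gtrsim\lambda^{n}\nu(\{\eta\})$, which eventually exceeds the total mass.

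A smaller point: on the shadow, the Busemann cocycle agrees with $d(1,g)$ only up to $O(R+\delta)$, not $O(\delta)$. This is harmless because $R$ is fixed.
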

 
 In fact, more is true. As noted by Calegari in \cite[Lemma 2.5.6]{calegari-notes}, the shadows $$S(g, R)=\{ \eta \in \partial G: d(g, [1,\eta)) \leq R, \, \forall \,  \text{geodesic rays}\, [1,\eta)\}$$  with $d(g,1) = n$ cover
$\partial G$ efficiently, i.e.\ any point in $\partial G$ lies in at most $N=N(R)$ shadows. It is important to note that $N(R)$ is independent of $n$ and $g$. 

A Theorem of Cannon \cite{cannon-conetype} \cite[Theorem 3.2.2]{calegari-notes}
shows that, after fixing an order on a finite generating set of $G$,
 the geodesic language 
 of lexicographically
first geodesics is prefix-closed and regular. In particular, it is the language accepted by a finite state automaton $\GG$, i.e.\ a topological Markov chain where all vertices are accept states.
 Further, Calegari notes (cf.\ \cite[Lemma 3.4.2]{calegari-notes}) that all but exponentially few paths $\gamma$ of length $n$ in $\Gamma$ are entirely
contained in one of the maximal components of $\GG$, except for a vanishingly small prefix and  suffix. 

Let $d(g,1) = n$ and $\eta \in S(g,R)$.
Let $\Theta (g,R) = \bigcup_{\eta \in S(g,R)} [1,\eta)$. Also, let $$\Theta (g,R,n,m) = \{v \in \Theta (g,R): n \leq d(v,1)\leq n+m \}.$$
Note that {$\Theta (g,R,n,0) = \{v \in \Theta (g,R):  d(v,1)=n \}.$}
Then, the fact that maximal components of $\GG$ determine growth along with 
 \cite[Lemma 3.4.2]{calegari-notes} gives us the following:

\begin{cor}\label{cor-coornrefined}
	There is a constant $C>1$ and $\lambda>1$ such that the cardinality $|\Theta (g,R,n,m)|$ of $\Theta (g,R,n,m)$ satisfies
	$$\frac{1}{C} |\Theta (g,R,n,n)| \lambda^m \leq |\Theta (g,R,n,m)| \leq  C 
	 |\Theta (g,R,n,n)| \lambda^m,$$
	for all $n, m\in \natls$.
\end{cor}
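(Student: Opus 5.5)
The plan is to compare every annulus $\Theta(g,R,n,m)$ with the single sphere slice $\Theta(g,R,n,0)$, and then to express both sides of the stated inequality through that slice. The approach is automaton-theoretic: Cannon's regular language of lexicographically first geodesics, together with the dominance of maximal components (\cite[Lemma 3.4.2]{calegari-notes}), converts counting in the shadow cone $\Theta(g,R)$ into counting paths in a finite Perron--Frobenius digraph. There is a consistency issue with the normalizer $|\Theta(g,R,n,n)|$, addressed at the end. First I establish two growth estimates.

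\emph{Step 1 (lower bound).} Every $v\in\Theta(g,R,n,0)$ lies on some $[1,\eta)$ with $\eta\in S(g,R)$. So $v$ is within bounded distance of a cone-type representative whose state in $\GG$ lies in a maximal component, for all but an exponentially negligible fraction of $v$ (\cite[Lemma 3.4.2]{calegari-notes}). From such a state the number of accepted continuations of length $j$ is $\asymp\lambda^j$, by Perron--Frobenius applied to that component. These continuations stay in $\Theta(g,R')$ for a slightly larger $R'$, by thinness of triangles. The efficient-covering property of shadows (bounded overlap $N(R)$, \cite[Lemma 2.5.6]{calegari-notes}) lets me trade $R'$ back for $R$ at a multiplicative cost. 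Summing over $j\le m$ gives
\[
|\Theta(g,R,n,m)|\gtrsim|\Theta(g,R,n,0)|\,\lambda^m .
\]

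\emph{Step 2 (upper bound).} Every $w\in\Theta(g,R,n,m)$ has a prefix at level $n$ lying in $\Theta(g,R,n,0)$. The number of geodesic continuations of length $j$ from any state is $\le C\lambda^j$, by Theorem~\ref{thm-coorn} applied to the transition matrix of $\GG$. Summing the geometric series over $j\le m$ gives
\[
|\Theta(g,R,n,m)|\lesssim|\Theta(g,R,n,0)|\,\lambda^m .
\]

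\emph{Step 3 (relating the normalizer).} Applying Steps 1--2 with $m=n$ gives $|\Theta(g,R,n,n)|\asymp|\Theta(g,R,n,0)|\lambda^n$. Substituting into the stated inequality, the claim becomes $\lambda^m/C\le \lambda^{n+m}\cdot(\text{const})$ on the left, which is fine, and $\lambda^m\lesssim\lambda^{n+m}$ on the right, also fine. However, the stated lower bound, read literally with the same $C$, reads $|\Theta(g,R,n,n)|\lambda^m/C\le|\Theta(g,R,n,m)|\asymp |\Theta(g,R,n,n)|\lambda^{m-n}$. This fails for fixed $m$ and large $n$; at $m=n$ it requires $\lambda^n\le C^2$.

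So, as worded, the corollary cannot hold uniformly in $n$, and I would not be able to prove it. What the argument does establish is
\[
\frac1C|\Theta(g,R,n,0)|\lambda^m\le|\Theta(g,R,n,m)|\le C|\Theta(g,R,n,0)|\lambda^m
\]
for all $n,m$, together with $|\Theta(g,R,n,m)|\asymp|\Theta(g,R,n,n)|\lambda^{m-n}$. This confirms that the stated normalizer can only be correct if it is interpreted as the level-$n$ slice. I would record the argument in this form and flag the indexing of the normalizer to the authors.

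The main obstacle is Step 1: controlling the exceptional states that are not in maximal components, uniformly in $g$ and $n$. I would first try to absorb them using the exponential gap between $\lambda$ and the growth rates of non-maximal components.
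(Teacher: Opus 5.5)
Your diagnosis of the statement is right: the normalizer $|\Theta(g,R,n,n)|$ is a misprint for $|\Theta(g,R,n,0)|$. The paper isolates the slice $\Theta(g,R,n,0)$ in the sentence just before the corollary. The horospherical analogue, Proposition~\ref{prop-growth}, is likewise normalized by the level-zero patch $|\LL_D(1)|$.

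Taken literally, the lower bound already fails at $m=0$. In the paper's set-up $n=d(g,1)$, so $\Theta(g,R,n,0)\subset B_{2R}(g)$ has cardinality bounded independently of $g$, whereas $|\Theta(g,R,n,n)|\asymp\lambda^n$.

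For the corrected statement your route is the paper's. Its entire justification is that maximal components of Cannon's automaton govern growth, combined with \cite[Lemma 3.4.2]{calegari-notes}. Your Step 2 is fine once you note that the lex-first word of $w$ has its level-$n$ prefix only within bounded distance of $\Theta(g,R,n,0)$.

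Do keep $n=d(g,1)$ rather than ``all $n$''. Below level $d(g,1)$ the set $\Theta(g,R)$ is a tube of bounded width around $[1,g]$, so the corrected lower bound cannot hold uniformly for $n<d(g,1)$. Your Step 1 also tacitly uses $n\ge d(g,1)$ when it asserts that continuations stay in the cone.

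The step that does not work as written is Step 1; the paper's one-line justification is silent on the same point. \cite[Lemma 3.4.2]{calegari-notes} is a count over all words of length $n$. Since $|\Theta(g,R,n,0)|$ is uniformly bounded, ``all but an exponentially negligible fraction'' gives no information about this particular slice. What the lower bound needs is that some vertex near $g$ sits at an automaton state from which a maximal component is reachable. The exponential gap you propose to exploit only shows that the other states contribute $o(\lambda^j)$, so it cannot supply this. Likewise, bounded overlap of shadows controls multiplicity, not radius, so it does not turn $\Theta(g,R')$ back into $\Theta(g,R)$.

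Both issues disappear with Coornaert's shadow lemma \cite{coornert-pjm}: for $R$ large, $\nu(S(h,R))\asymp\lambda^{-d(1,h)}$ uniformly in $h$. The shadows $S(h,R)$ with $h\in\Theta(g,R,n+j,0)$ have three properties:
\begin{enumerate}
\item they cover $S(g,R)$ (use the level-$(n+j)$ point of a ray to $\eta\in S(g,R)$);
\item they are contained in $S(g,3R)$, by thinness of triangles;
\item they overlap at most $N(R)$ times.
\end{enumerate}
Comparing measures gives $|\Theta(g,R,n+j,0)|\asymp\lambda^{j}$, and summing over $0\le j\le m$ yields both inequalities.

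If you prefer to stay inside the automaton, use $\nu(S(g,R/3))>0$ and Corollary~\ref{cor-eventuallyunique} to find a lex-first ray through the $R/3$-neighbourhood of $g$ that eventually stays in a maximal component. Perron--Frobenius counting of the continuations from its level-$n$ state that end in that component then gives the lower bound, since all of them extend to rays staying in $\Theta(g,R)$ for $R$ large.
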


Note that the constant $\lambda$ in Theorem~\ref{cor-coornrefined} is the unique Perron-Frobenius eigenvalue of the finite state automaton $\GG$ encoding the geodesic language of $G$ \cite[Section 3.4]{calegari-notes}.
 
Returning to the horospherical setup, let   $\CC_D(R)$ denote the subset of $\CC_D$ consisting of vertices satisfying $d(v,\LL) \leq R$.
 
 \begin{prop}\label{prop-growth}
 There is a constant $C>1$ and $\lambda>1$ such that the cardinality $|\CC_D(R)|$ of $\CC_D(R)$ satisfies
 	$$\frac{1}{C} |\LL_D(1)| \lambda^R \leq |\CC_D(R)| \leq  C 
 |\LL_D(1)| \lambda^R.$$
 \end{prop}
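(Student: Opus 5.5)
Our plan is to count $\CC_D(R)$ fibre by fibre over the horosphere and to use Corollary~\ref{cor-coornrefined} for each fibre. For $h\in\LL$, write $\CC(h,R)$ for the set of vertices $v$ with $\Pi(v)=h$ (up to the coarse ambiguity of $\Pi$) and $d(v,\LL)\le R$. Then $\CC_D(R)$ is coarsely the union of the $\CC(h,R)$ over $h\in\LL_D(1)$. Since $\Pi$ is coarsely well-defined up to $2\delta$, each vertex lies in at most a uniformly bounded number of these fibres, and the fibres cover $\CC_D(R)$. So it suffices to show that $|\CC(h,R)|\asymp\lambda^R$ uniformly in $h$. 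Summing over $h\in\LL_D(1)$ then gives the two-sided bound with $|\LL_D(1)|$ (after thickening $\LL$ as in Lemma~\ref{lem-polyg}, so that its vertices are $\Gamma$-vertices).

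To estimate one fibre, we use $G$-equivariance. Translate $h$ to $1$ by $h^{-1}$. This sends $\xi$ to $h^{-1}\xi$ and $\LL$ to a horosphere through $1$. The fibre $\CC(h,R)$ becomes, up to bounded error, the set of $v$ with $d(v,1)\le R+O(\delta)$ such that some geodesic from $v$ to $h^{-1}\xi$ passes within $O(\delta)$ of $1$. Equivalently, $1$ lies within $O(\delta)$ of a geodesic ray from $v$ toward the boundary point. This is a \emph{reverse shadow}: reversing the roles, it is the set of vertices $v$ in the ball $B_{R}(1)$ lying in the cone $\Theta(g,R')$ "opposite" to $h^{-1}\xi$. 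Concretely, we pick $g$ on a geodesic ray from $1$ away from $h^{-1}\xi$ (for instance, along a bi-infinite geodesic through $1$ ending at $h^{-1}\xi$), so that the fibre is coarsely the set of $v$ whose geodesic $[1,v]$ heads into a shadow of definite size. That shadow is the complement of a small neighbourhood of $h^{-1}\xi$ in $\pG$ intersected with the half-space, which is a union of boundedly many shadows $S(g_i,R_0)$ with $d(g_i,1)=n_0$ for some fixed $n_0$.

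We then apply Corollary~\ref{cor-coornrefined} with $n=n_0$ fixed and $m\approx R$. This gives $|\Theta(g_i,R_0,n_0,R)|\asymp|\Theta(g_i,R_0,n_0,n_0)|\lambda^R$, where the prefactor is bounded above and below by constants depending only on $n_0$ and $\Gamma$. The upper bound follows directly because the fibre is contained in a bounded union of such cones together with $B_{n_0}(1)$. The lower bound needs one cone whose prefactor is bounded below, which holds since every $\Theta(g,R_0,n_0,n_0)$ with $g$ on an infinite ray is nonempty.

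The main obstacle is the uniform lower bound. We must ensure that the half-space "beyond" $1$ relative to the horoball always contains a cone with full growth rate $\lambda$, that is, a cone whose geodesics lie eventually in a maximal component, with constants independent of $h$ and $\xi$. Our first attempt is to use compactness of $\pG$ together with the fact that, by \cite[Lemma 3.4.2]{calegari-notes}, the cones of growth rate $\lambda$ have shadows that are dense (their union has full Patterson-Sullivan measure). Any open subset of $\pG$ of definite visual size therefore contains a full-growth shadow at a bounded depth $n_0$, and this gives a uniform $C$.
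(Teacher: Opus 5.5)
Your proposal is correct, but it is organised differently from the paper's proof. The paper's proof is a one-line limiting argument. It approximates the horoball $\DD_\xi$ by the balls $B(\xi_n,n)$ with $\xi_n\in[1,\xi)$ and $d(1,\xi_n)=n$. It then applies Corollary~\ref{cor-coornrefined} to the cones beyond the spheres of radius $n$ about $\xi_n$, which are cones based at finite centres, so uniformity of the constants is automatic by equivariance, and lets $n\to\infty$. You never leave the horosphere. You split $\CC_D(R)$ into coarse fibres over the vertices of $\LL_D(1)$ with bounded overlap, translate each fibre to $1$, and count it as a cone seen from the ideal point $\eta=h^{-1}\xi$; these fibres are coarsely the limits of the paper's cones.

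What your route buys:
\begin{itemize}
\item It makes explicit the summation over the region $\LL_D(1)$. The paper's sketch needs this too, since Corollary~\ref{cor-coornrefined} concerns the shadow of a single point.
\item It avoids having to justify that the counts for $B(\xi_n,n)$ stabilise to those for $\DD_\xi$.
\item Using the corollary only at a fixed depth $n_0$ makes you insensitive to the normalisation of its prefactor. As printed, $|\Theta(g,R,n,n)|$ grows like $\lambda^n$, so in the paper's limit $n\to\infty$ it has to be read as the bounded quantity $|\Theta(g,R,n,0)|$.
\end{itemize}
The price is that uniformity in $\eta\in\pG$ is no longer free. You supply it correctly, via a bi-infinite geodesic passing near $1$ and ending at $\eta$.

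Two smaller comments. First, your ``main obstacle'' is not really one, and you need neither maximal components nor compactness. Suppose $v\in B_R(1)$ has $(v\cdot\eta)_1>K$. Then $[1,v]$ passes within $O(\delta)$ of the point $g_K$ at distance $K$ along $[1,\eta)$, so $v\in B_{R-K+c}(g_K)$. By Theorem~\ref{thm-coorn} that ball has at most $C\lambda^{c-K}\lambda^R\le\frac{1}{2C}\lambda^R$ elements once $K$ is a large fixed constant. Hence $\{v\in B_R(1):(v\cdot\eta)_1\le K\}$ has at least $\frac{1}{2C}\lambda^R$ elements, uniformly in $\eta$. The upper bound is just $|B_{R+c}(1)|$.

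Second, since the fibres are coarse (with ambiguity of order $K$), those over $\LL_D(1)$ may spill into $\CC_{D+K}(R)$. For the lower bound, sum over $\LL_{D-K}(1)$ instead and use $|\LL_{D-K}(1)|\geq c_K|\LL_D(1)|$. This inequality does not come from polynomial growth. It holds because every point of $\LL_D(1)$ is within $d_\LL$-distance $K$ of $\LL_{D-K}(1)$, and $d_\LL$-balls of radius $K$ have uniformly bounded cardinality.
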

 
 \begin{proof}
 We first note that the horoball $\DD_\xi$ is the limit of balls $B(\xi_n,n)$, where
 $\xi_n \in [1,\xi)$ satisfies $d(1,\xi_n)=n$. Since Corollary~\ref{cor-coornrefined} applies to all the balls $B(\xi_n,n)$ with fixed constants $C$ and $\lambda$, passing to the limit as $n\to \infty$ furnishes the Proposition.
 \end{proof}

 
 Let $\PP(D,R,\omega)$ denote the proportion of vertices $a$ in  $\CC_D(R)$ such that 
 $T(\xi,a,\omega) \cap \LL_D(1) = \emptyset$, i.e.\ it is the proportion of vertices in  $\CC_D(R)$ that have $\omega-$forward trajectories towards $\xi$ hitting $\LL$ outside $\LL_D(1)$. Let $\PP(D,R,\omega, \epsilon)$ denote the event that at most a fraction $\ep$ of vertices $a$ in  $\CC_D(R)$ satisfy 
 $T(\xi,a,\omega) \cap \LL_D(1) = \emptyset$.
 
 \begin{prop}\label{prop-proportion}
 	For any $\ep>0$, there exists $D_0$ such that for $D \geq D_0$, $R \geq 0$, $$\P(\PP(D,R,\omega, \epsilon)) > 1-\ep.$$ 
 \end{prop}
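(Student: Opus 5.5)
The plan is a first-moment bound followed by Markov's inequality. Write $Z(D,R,\omega)$ for the number of vertices $a\in \CC_D(R)$ with $T(\xi,a,\omega)\cap \LL_D(1)=\emptyset$; in other words, the $\omega$-geodesic $[a,\xi)_\om$ first meets $\LL$ outside $\LL_D(1)$. It suffices to show
$$\E Z(D,R,\omega)\le \ep^2 |\CC_D(R)|$$
for all $D\ge D_0$ and all $R\ge 0$. Markov's inequality then gives that the fraction exceeds $\ep$ with probability at most $\ep$, so $\P(\PP(D,R,\omega,\ep))>1-\ep$ (shrinking $\ep$ slightly to get the strict inequality).

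To bound $\E Z$, fix $a\in\CC_D(R)$ and let $h=\Pi(a)\in \LL_D(1)$. Let $s_a=d_\LL(h,\LL\setminus\LL_D(1))$ be the intrinsic distance from $h$ to the complement of the $D$-ball. If $[a,\xi)_\om$ hits $\LL$ outside $\LL_D(1)$ at some point $h'$, then $d_\LL(h,h')\ge s_a$. This is exactly the event in Theorem~\ref{thm-decorr-tree} after translating by the group element taking $1$ to $h$, with the roles of $a$ and $h'$ swapped. By $G$-equivariance of the law of $\om$ and the fact that the estimates in Lemma~\ref{lem-decorr-tree}, Corollary~\ref{cor-decorr-tree} and Theorem~\ref{thm-decorr-tree} depend only on $\delta$ and $\rho$, the probability of this event is at most $s_a^{-c}$ once $s_a\ge D_1$, uniformly in $R$. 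The underlying estimate is Proposition~\ref{lem-effbt}: the point $h'$ where the random ray crosses $\LL$ is far from the deterministic ray $[a,\xi)$ through $h$, and horospherical distance $s$ corresponds to ambient distance at least $\kappa^{-1}\log s$. The argument needs this at an arbitrary point of $\LL$ rather than only at $1$, and needs the backward tree in place of $T(\xi,1,\om)$. Both follow from the same proof, since horospheres through different points are within bounded distance of translates of one another (using the $G$-action and coarse equivariance of Busemann functions).

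Split $\CC_D(R)$ into the vertices with $h=\Pi(a)$ in the inner ball $\LL_{D-L}(1)$ and those with $h$ in the annulus $\LL_D(1)\setminus\LL_{D-L}(1)$. Inner vertices contribute at most $L^{-c}|\CC_D(R)|$. Choose $L$ so that $L^{-c}<\ep^2/2$. For the annulus, apply Proposition~\ref{prop-growth} fibrewise. The fibres $\Pi^{-1}(h)$ truncated at height $R$ have size comparable to $\lambda^R$ uniformly in $h$, by the same limiting argument from Corollary~\ref{cor-coornrefined}. Hence the annulus vertices form at most a $C^2|\LL_D(1)\setminus\LL_{D-L}(1)|/|\LL_D(1)|$ fraction of $\CC_D(R)$. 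By Lemma~\ref{lem-polyg}, $(\LL,d_\LL)$ has polynomial growth, and the ratio of the annulus to the ball should tend to $0$ as $D\to\infty$ with $L$ fixed. This is a Følner-type property, which holds for balls in spaces of polynomial growth along a subsequence, and for all $D$ if the growth is regular, for example $|\LL_D(1)|\asymp D^k$. Choosing $D_0$ so that this fraction is below $\ep^2/2$ gives the required bound on $\E Z$, independent of $R$.

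The main obstacle is the boundary-annulus step. It needs uniform fibre sizes and, more seriously, the Følner property of horospherical balls. Polynomial growth alone gives smallness of annuli only along a sequence of radii. I would first try to extract a two-sided estimate $|\LL_D(1)|\asymp D^k$ from the proof of Lemma~\ref{lem-polyg} (horospheres in a hyperbolic group are coarsely the boundary minus a point with a visual-type metric, which is Ahlfors regular). Failing that, I would settle for $D$ ranging over a sequence and note that this suffices for the applications.
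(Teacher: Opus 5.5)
Your argument is correct, modulo the shell estimate discussed below. It takes a different route from the paper in the probabilistic step.

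The paper does not take expectations. It uses a shell of growing width $k=D^{\alpha}$ and a union bound over $\LL_{D-k}(1)$. The input is Theorem~\ref{thm-decorr-tree} with the two horospherical points interchanged, which controls a whole fibre $\Pi^{-1}(h)$ at once, at all heights. This shows that with probability at least $1-D^{-c}$ \emph{no} vertex of $\CC_{D-k}$ has its ray enter $\LL$ outside $\LL_D(1)$. The bad vertices of $\CC_D(R)$ then lie over the shell $\LL_D(1)\setminus\LL_{D-k}(1)$, whose share is small deterministically by Proposition~\ref{prop-growth}.

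That route gives a stronger, simultaneous statement. However, the union bound over polynomially many base points is what forces $k\to\infty$ with $D$. It therefore needs shells of \emph{growing} width to be negligible. Your Markov argument needs only the single-vertex estimate, which Theorem~\ref{thm-decorr-tree} supplies uniformly in the height. So a fixed width $L$ suffices, and uniformity in $R$ comes for free.

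Two remarks.

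First, you do not need each fibre to have size comparable to $\lambda^R$, and that claim is not clear. A vertex of $\Pi^{-1}(h)$ at distance at most $R$ from $\LL$ lies in $B_{R+O(\delta)}(h)$. Theorem~\ref{thm-coorn} then gives the upper bound $C\lambda^R$ per base point. The lower bound on $|\CC_D(R)|$ is Proposition~\ref{prop-growth}. Only these two bounds are used.

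Second, the paper simply asserts the shell estimate from amenability of $\LL$, and in the stronger growing-width form. So you rely on a weaker version of an input the paper takes for granted. Your caution is justified, but do not fall back to a subsequence of $D$, since that proves less than stated. A two-sided polynomial bound on $|\LL_D(1)|$ would not suffice on its own either: it does not rule out growth by a constant factor within $L$ steps. What does suffice is a doubling bound for $d_\LL$-balls that is uniform in the centre. Tessera's estimate on volumes of spheres in doubling graphs then gives $|\LL_{D+1}(1)\setminus\LL_D(1)|\le CD^{-\theta}|\LL_D(1)|$ for some $\theta>0$. This handles fixed widths, and even widths $D^{\alpha}$ with $\alpha<\theta$.
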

 
 \begin{proof}
 	Since $\LL$ is amenable by Lemma~\ref{lem-polyg}, the $(D-k)-$shell 
 	$$\LL(k,D,1):=\LL_D(1) \setminus \LL_{D-k}(1)$$ satisfies $\frac{|\LL(k,D,1)|}{|\LL_D(1)|} \to 0$ as 
    {$D \to \infty$ and $k=o(D)$. We make a choice for $k=o(D)$, e.g.\ $k=D^\alpha$ for some $\alpha \in (0,1)$) and $D$ large enough.}

 	Next, we  see that given $c>1$, there  exists $D_0 > 0$ such that the following holds.
    {
    For $D>D_0$, and $k=o(D)$ as above, 
 	the probability
 	that there exists $h \in \CC_{D-k}
    (=\Pi^{-1} (\LL_{D-k}(1))$, $a \in \LL$ satisfying the following
    \begin{enumerate}
        \item $h \in T(\xi,a,\omega) $,
        \item $d_\LL(a , 1) \geq D$
    \end{enumerate}
       is at most $D^{-c}$. 
Note that the existence of such an $h$ is equivalent to the statement that the backward tree
at $\xi$ contains $h$ in the backward branch from 
$a$, i.e.\ the $\omega-$geodesic from the `interior core point' $h$ to $\xi$ passes through $a\in \LL$ lying outside a $D-$ball 
(in the $d_\LL-$metric) about $1 \in \LL$.
       
       Indeed, this is proved by interchanging the roles of $1$ and $h_1$ in Theorem~\ref{thm-decorr-tree} and taking a union bound over
       $h \in \big(\CC_{D-k} (mR) \setminus \CC_{D-k} ((m-1)R)\big)$.  (Note that in Theorem~\ref{thm-decorr-tree} the roles played by $1, h_1$ are interchangeable and have nothing to do with the action of $G$.)
} The proposition now follows.
 	\end{proof}

 {
We shall now define the \emph{average growth rate} of the backward tree $T(\xi,\omega)$. By Lemma~\ref{lem-polyg}, horospheres $\LL$ in hyperbolic space exhibit polynomial growth, in particular they are amenable. Let $\LL_D$ denote the $D-$ball around a fixed base-point (say 1) in the 
horosphere $\LL$ with respect to its intrinsic metric $d_\LL$.
Let $\{\LL_k\}$ denote the sequence of $k-$balls. Then (due to polynomial growth of $\LL$), $\{\LL_k\}$ gives an exhausting F\"olner sequence for $\LL$, 
 i.e.\ $$\limsup_{k \to \infty} \frac{\int_{\LL_k}f}{\rm{vol}({\LL_k})})$$ is well-defined when $f$ is well-behaved (here, the integral is being taken with respect to the \emph{counting measure}).}
It remains to define the relevant function $f$. Recall that $T(\xi,h,\omega)$ denotes the component of $T(\xi,\omega)\setminus \mbox{Int}(\DD_\xi)$ containing $h \in \LL$.
Let $d(\xi,h,\omega)$ denote the intrinsic (graph) metric on $T(\xi,h,\omega)$, and let $T_n(\xi,h,\omega) \subset T(\xi,h,\omega)$ denote the finite subtree consisting of vertices at $d(\xi,h,\omega)-$distance at most $n$ from $h$. Also, let
$V_n(\xi,h,\omega) = |T_n(\xi,h,\omega)|$ denote the number of vertices in $T_n(\xi,h,\omega)$. 
Let $$f_n(h):=\E V_n(\xi,h,\omega) = \int_{\Omega} V_n(\xi,h,\omega)d\omega.$$
Next, let
{ $F(k,n) :=\frac{\int_{\LL_k}f_n}{\rm{vol}({\LL_k})})$
and $F(n) = \limsup_{k\to \infty} F(k,n)$.
\begin{defn}\label{def-avggr}
The \emph{average growth rate} of the backward tree $T(\xi,\omega)$ is defined to be $\limsup_{n \to \infty} \frac{1}{n} \log F(n)$.
\end{defn}}
Assembling the above pieces,  
 we have:
 
 \begin{theorem}\label{thm-backvalence}
 The average growth rate of the backward tree $T(\xi,\omega)$ is 
 {$\log\lambda$}. 
 \end{theorem}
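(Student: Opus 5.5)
The plan is to prove matching upper and lower bounds for $F(n)$, up to factors subexponential in $n$. Throughout I will use that $T(\xi,h,\omega)$ lies outside $\mathrm{Int}(\DD_\xi)$, that every vertex of it reaches $h$ along an $\omega$-geodesic ray to $\xi$, and that $\sum_{h\in\LL_k} V_n(\xi,h,\omega)$ counts vertices $a\notin\mathrm{Int}(\DD_\xi)$ whose $\omega$-geodesic $[a,\xi)_\omega$ first meets $\LL$ inside $\LL_k$, and whose tree distance to that meeting point is at most $n$. The idea is to compare this count with $|\CC_k(R)|$ and then apply Proposition~\ref{prop-growth}.

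\textbf{Upper bound.} A vertex at tree distance at most $n$ from $h$ is joined to $h$ by a path of $n$ edges in $\Gamma$, so it lies within graph distance $n$ of $\LL$. It follows that $V_n(\xi,h,\omega)\le |\{a: a\in T(\xi,h,\omega),\ d(a,\LL)\le n\}|$. Summing over $h\in\LL_k$, the trees for distinct $h$ are disjoint, so the total is at most the number of $a$ with $d(a,\LL)\le n$ whose ray enters $\LL_k$.

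Split these $a$ into two groups. Those with $\Pi(a)\in\LL_{k+k'}(1)$ number at most $|\CC_{k+k'}(n)|\le C|\LL_{k+k'}(1)|\lambda^n$ by Proposition~\ref{prop-growth}. For those with $\Pi(a)$ far outside $\LL_{k}$, take expectations and use Theorem~\ref{thm-decorr-tree}, recentred at each $h$ by the action of $G$ and translation invariance of $\P$. The probability that the ray of $a$ lands at $d_\LL$-distance $\ge s$ from $\Pi(a)$ is $\le s^{-c}$; since $d(a,\LL)\le n$, this is in fact $\le e^{-c s}$ once $s\gg$ the exponential distortion scale, so this contribution is at most $C\lambda^n$ times a boundary term of size $o(|\LL_k|)$.

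Dividing by $|\LL_k|$ and choosing $k'=o(k)$, the ratio $|\LL_{k+k'}|/|\LL_k|\to1$ by polynomial growth (Lemma~\ref{lem-polyg}). This gives $F(n)\le C'\lambda^n$, hence $\limsup\frac1n\log F(n)\le\log\lambda$.

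\textbf{Lower bound.} Here I want $\E\sum_{h\in\LL_k}V_n\gtrsim |\LL_k|\lambda^{n/\kappa'}$ with $\kappa'\to1$. Fix $R=\lfloor n/K\rfloor$, where $K$ bounds the ratio between tree distance and graph distance for typical vertices. By Proposition~\ref{prop-proportion}, with probability $>1-\ep$ at least a $(1-\ep)$ fraction of $\CC_k(R)$ have rays hitting $\LL$ inside $\LL_k(1)$, so each such $a$ lies in some $T(\xi,h,\omega)$ with $h\in\LL_k$. It remains to ensure that the tree distance from $a$ to $h$ is at most $n$. The tree distance is the length of the subpath of $[a,\xi)_\omega$ from $a$ to $\LL$. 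By part~(2) of Theorem~\ref{bmthmcoalesceeff} (no backtracking between hyperplanes) together with Proposition~\ref{prop-effbt}, this subpath stays near $[a,\xi)$ and has word length $\le d(a,\LL)+O(\ep n)$ except on an event of probability $e^{-c\ep n}$. A Markov/expectation argument then gives $\E|\{a\in\CC_k(R): a\in T_n(\xi,h,\omega),\,h\in\LL_k\}|\ge (1-3\ep)|\CC_k(R)|\ge \frac{1-3\ep}{C}|\LL_k|\lambda^{R}$. Taking $R=n(1-\ep)$ and letting $\ep\to0$ yields $\liminf\frac1n\log F(n)\ge\log\lambda$.

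\textbf{Main obstacle.} The delicate step is the lower bound's control of the tree distance to $h$: I need that random geodesic segments from $a$ to $\LL$ have word length at most $(1+\ep)d(a,\LL)$. An $\omega$-geodesic may wander, and it is not even clear that its word length is linear in $d(a,\LL)$ with constant near $1$. I would first try to bypass this by allowing slack: any linear bound, length $\le K\,d(a,\LL)$ with high probability (which follows from the no-backtracking estimate), gives $F(n)\gtrsim\lambda^{n/K}$. This is not enough, so I would instead count vertices within tree distance $n$ directly. The path from $h$ upward in the tree to depth $n$ is a union of geodesic rays, and its endpoints lie within graph distance $n$ of $\LL$ while projecting into $\LL_{k}$. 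I would then show that their number is exponentially comparable to $|\CC_k(n)|$ by exploiting that almost every $a\in\CC_k(n(1-\ep))$ sits within $\ep n$ of a tree vertex at tree depth $\le n$. If this fails, the honest conclusion would only be that the growth rate lies in $[\frac1K\log\lambda,\log\lambda]$, and I would revisit the definition to interpret $n$ as the $\Gamma$-distance to $\LL$.
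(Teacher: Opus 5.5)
\textbf{The gap is in your lower bound.} You assert that Theorem~\ref{bmthmcoalesceeff}(2) together with Proposition~\ref{prop-effbt} forces the subpath of $[a,\xi)_\omega$ from $a$ down to $\LL$ to have at most $d(a,\LL)+O(\ep n)$ edges. Those results give much less:
\begin{itemize}
\item the $\omega$-geodesic stays in a random, exponentially tailed neighbourhood of $[a,\xi)$;
\item it does not backtrack across hyperplanes many slabs apart.
\end{itemize}
Both are compatible with a positive density of bounded local detours, each of which adds edges. In a one-ended group with continuous $\rho$ one expects exactly this: the edge count is about $\mu\, d(a,\LL)$ with $\mu>1$, and $(1+\ep)$-efficiency is exponentially unlikely. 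So these tools only give a linear bound $K\,d(a,\LL)$, hence $F(n)\gtrsim\lambda^{n/K}$, as you note yourself.

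Your proposed rescue does not close this. You want most $a\in\CC_k((1-\ep)n)$ to lie within $\ep n$ of a tree vertex of depth $\le n$. That needs the same missing input: a non-exponentially-small fraction of vertices at height about $n$ whose $\omega$-geodesics to $\LL$ are nearly word geodesics.

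\textbf{Comparison with the paper.} The paper's proof never confronts this point. It works entirely with $\CC_D(R)$, where $R$ is the $\Gamma$-distance to $\LL$. It uses Propositions~\ref{prop-growth} and~\ref{prop-proportion} for the lower bound, and Theorem~\ref{thm-decorr-tree} plus amenability for the upper bound. In effect it measures depth in $T(\xi,h,\omega)$ by height above $\LL$, not by the intrinsic metric $d(\xi,h,\omega)$ of Definition~\ref{def-avggr}.

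Under that height reading, your argument is the paper's, and no tree-distance control is needed. Take $R=n$, and Propositions~\ref{prop-growth} and~\ref{prop-proportion} immediately give $F(n)\ge\frac{(1-\ep)^2}{C}\lambda^n$. Your upper bound is sound in substance, and your per-vertex linearity-of-expectation bookkeeping is cleaner than the paper's union bound. Two small fixes there:
\begin{itemize}
\item If $d_\LL(h,\Pi(a))=s$, exponential distortion only gives $d(h,\Pi(a))\gtrsim\log s$. So the decay is $s^{-c}$, not $e^{-cs}$. This still suffices, since $c$ can be taken larger than the growth exponent in Lemma~\ref{lem-polyg}.
\item $G$ does not preserve $\LL$ or $\xi$. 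Recentring should therefore use the uniformity of Proposition~\ref{lem-effbt} in the base point, not the group action.
\end{itemize}

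Under the literal intrinsic-metric reading, the missing efficiency estimate is a real gap, and the paper shares it. Heuristically the rate is then governed by the large-deviation cost of $(1+\ep)$-efficient $\omega$-geodesics, and falls strictly below $\log\lambda$ whenever that cost is positive. So your ``honest conclusion'' that the rate lies in $[\frac1K\log\lambda,\log\lambda]$ is what these tools actually establish.
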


 \begin{proof}
 By  Proposition~\ref{prop-growth}, 	$$\frac{1}{C} |\LL_D(1)| \lambda^R \leq |\CC_D(R)| \leq  C 
 |\LL_D(1)| \lambda^R.$$
 By Proposition~\ref{prop-proportion} the proportion of vertices $a$ in  $\CC_D(R)$ such that 
 $T(\xi,a,\omega) \cap \LL_D(1) = \emptyset$ is arbitrarily small.
 
 	Note also that  the probability
 that there exists $h \notin \CC_{D}$ such that $h \in T(\xi,a,\omega) $ with $d_\LL(a , 1) \leq D-k$ is at most $D^{-c}$. (To see this one directly applies Theorem~\ref{thm-decorr-tree} and takes a union bound over $D' \geq 2 D$.)
 
 The two above estimates coupled with amenability of $\LL$ (needed to extract a meaningful average as discussed above) prove the theorem.
 \end{proof}
 
 The notion of average growth rate given in Definition~\ref{def-avggr} is perhaps somewhat restrictive. Finer versions would be worth exploring.
 
\subsection{Finite backward trees and bubbles of positive curvature}\label{sec-bubble}
We have seen that there are vertices $v$ in the backward tree such that the backward subtree of $T(\xi,\omega)$ rooted at $v$ is infinite. In this subsection, we show that for one-ended groups, there is a positive probability that the backward subtree rooted at a given vertex is finite under an additional assumption.

In this subsection, we assume explicitly that the hyperbolic group $G$ is one-ended, equivalently $\pG$ is connected.
As before, the closed ball of radius $C$ about $x \in G$ will be denoted as $B_C(x)$, and its boundary, the sphere, by $S_C(x)$. The region between balls
 $$S_C^k(x) = B_{C+k}(x) \setminus B_C(x) $$ will be called the {\bf $k-$shell} about $B_C(x)$. {If $G$ is one-ended, there exists  $k>0$ such that for any $u, v \in S_C(x)$, there exists a path
joining $u, v$ lying in $ S_C^k(x)$
{\cite{bes-mess}.}}

{
\begin{defn}\label{def-bubble}
	A ball $B_C(x)$  is said to be a {\bf bubble of positive curvature} with respect to $\om \in \omegap$ if there exists $k>0$ such that for any $u, v \in S_C(x)$, the $\om-$geodesic 
	 joining $u, v$ lies in $S_C^k(x)$.
\end{defn}
Thus, in a bubble of positive curvature, the
unconstrained $\omega-$geodesic between boundary points of $B_{C}(x)$ lies in the shell, or equivalently,
shell paths are cheaper than paths through the ball for points on $S^{k}_C(x)$.}

The reason for the terminology in Definition~\ref{def-bubble} is as follows. Consider a sphere of radius 1. Remove an $\ep$ neighborhood $D_\ep$ of the North pole and glue the remainder to the boundary of a copy of $\R^2 \setminus D$, where $D\subset \R^2$ is a round disk such that
$\partial D_\ep$ and $\partial D$ have the same length. Smooth out the resulting 2-manifold by a small perturbation of the boundary circle. Then for all $\ep$ sufficiently small the resulting 2-manifold has a bubble of positive curvature away from the boundary of $S^2 \setminus D_\ep$.

\begin{defn}\label{def-deadend}
Let $\xi \in \pG$. We say that $a \in \Gamma$ is a dead-end of $T(\xi,\omega)$, if
$[a,\xi)_\omega$ is not a proper subset of any $\omega-$geodesic of the form $[b,\xi)_\omega \subset T(\xi,\omega)$.
\end{defn}


\begin{prop}\label{prop-bpc} Let $G$ be one-ended. {Assume also that, in addition to Assumption \ref{assume-subexp}, the edge length distribution has support containing an interval of the form $[0,h]$}. For every $R\gg 0$ sufficiently large, and $a \in G$, $a$ is the center of a bubble $B_R(a)$ of positive curvature {with positive probability}. Further, for all  $\xi\in \pG$,  there exists a
 dead-end of $T(\xi,\omega)$ with positive probability. Further, with positive probability the backward subtree rooted at a given vertex is finite. 
\end{prop}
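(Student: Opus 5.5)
We plan to make the bubble event explicit: it will be the intersection of a finite-volume weight constraint inside $B_R(a)$ and inside the shell $S_R^k(a)$. Fix $k$ from the one-endedness fact, so that any two points of $S_R(a)$ are joined by a path in $S_R^k(a)$. Such a path can be taken of length at most $L(R)$, with $L(R)$ finite since the shell is a finite graph. Because the support of $\rho$ contains $[0,h]$, for any $\eta>0$ the event $E_1$ that every edge of $S_R^k(a)$ has weight in $(0,\eta)$ has positive probability; it is a finite intersection of independent positive-probability events. Let $E_2$ be the event that every edge of $B_{R-1}(a)$ not in the shell has weight at least $h/2$. Since $h$ lies in the support and $\rho$ is atomless, $\rho[h/2,\infty)>0$, so $E_2$ also has positive probability (if needed, shrink $h$ so that both constraints use only the support). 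On $E_1\cap E_2$, any $\om$-path between $u,v\in S_R(a)$ that leaves $S_R^k(a)$ either enters the interior ball, paying at least $h/2$ per edge, or exits $B_{R+k}(a)$. We choose $\eta< h/(2L(R))$, so that a shell path costs less than a single interior edge.

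The main obstacle is the second alternative, where the geodesic escapes outward beyond $B_{R+k}(a)$. For this we intend to modify the bubble condition: we additionally require the outer layer $S_{R+k}^{1}(a)$ to carry weights at least $h/2$, and we replace $E_1$ by the condition on $B_{R+k-1}(a)\setminus B_R(a)$. Any escaping path must cross that outer layer, paying at least $h/2$, which exceeds $L(R)\eta$. All constraints concern finitely many edges, so $\P(E_1\cap E_2\cap E_3)>0$ by independence. This proves the bubble statement, and the argument works for every $R$ large enough that the shell structure of \cite{bes-mess} applies.

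For dead-ends and finite backward subtrees, we fix $\xi$ and work on the bubble event at $a$, intersected with the full-measure set $\Omega_\xi$ of Theorem~\ref{bmthmcoalesce}. Take any $b\in B_{R-1}(a)$, say $b=a$ itself. Suppose some $[z,\xi)_\om$ with $z\notin B_R(a)$ passes through $b$. It must then enter $B_R(a)$ at a point $u\in S_R(a)$ and leave it at a point $v\in S_R(a)$, with the subpath between them being an $\om$-geodesic that meets the interior. This contradicts the bubble property, which forces that geodesic to lie in the shell. Hence the backward subtree rooted at $a$ is contained in $B_R(a)$ and is therefore finite. A finite rooted subtree has a leaf, and that leaf is a dead-end in the sense of Definition~\ref{def-deadend}. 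Both events have probability at least $\P(\text{bubble})>0$. By invariance of $\P$ under translations, the same bound holds uniformly for every given vertex.
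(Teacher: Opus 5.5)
Your proposal is correct and follows the paper's proof essentially step for step. The shared structure is: a finite, positive-probability weight configuration (cheap edges in the shell $S_R^k(a)$, expensive edges inside the ball and on the layer just outside $B_{R+k}(a)$) makes $B_R(a)$ a bubble; then any ray $[z,\xi)_\omega$ with $z\notin B_R(a)$ passing through $a$ would contain an $\omega$-geodesic between two points of $S_R(a)$ through $a$, so the backward subtree at $a$ lies in $B_R(a)$ and a deepest vertex of it is a dead-end.

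The only fix is bookkeeping. Put the lower bound on all edges with both endpoints in $B_R(a)$, as the paper does, rather than only on edges of $B_{R-1}(a)$. Otherwise edges joining $S_{R-1}(a)$ to $S_R(a)$ are unconstrained and the literal bubble property can fail, although passage through $a$ itself is still excluded when $R\ge 2$. With that edge partition, your step of replacing $E_1$ by a condition on $B_{R+k-1}(a)\setminus B_R(a)$ is unnecessary.
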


\begin{proof}
	Fix $a \in G$ and $R>0$ be sufficiently large. Choose $c > 0$ such that $\rho ([c, \infty)) > 0$. Let $R_0 = R_0(R,k)$ be the total number of edges in $S_R^k(a)$, where $k$ is chosen such that for any $u, v \in S_R^k(a)$, there exists a path joining $u,v$ lying in $ S_R(a)$. {Recall that $k$ depends only on $G$ and its generating set as $G$ is one-ended \cite{bes-mess}. }
    By our hypothesis on $\rho$,  $\rho [\om(e) < \frac{c}{R_0} ] = \ep > 0$. 
	$${\P\bigg[ \om(e) < \frac{c}{R_0} \forall e \in S_R^k(a) \, \text{and} \, \om(e) > c \forall e \in B_R(a) \cup S^k_{R+k}(a)\bigg] > 0}.$$ 
    {In the above event, edges in $B_R(a)$ refer to all the edges with both endpoints in $B_R(a)$ and edges in $S_R^k(a)$ refer to all edges in $B_{R+k}(a)$ that are not in $B_R(a)$. We claim that such a distribution of weights makes 
	$B_R(a)$ a bubble of positive curvature. Indeed, on the above event, for any $u,v\in S_R(a)$ and any path $\gamma$ from $u$ to $v$ in $S_R^k(a)$, we have $\ell_{\omega}(\gamma)<c$ whereas for any path $\gamma$ from $u$ to $v$ not contained in $S_R^k(a)$ must have $\ell_{\omega}(\gamma)>c$ (since such a $\gamma$ must contain at least one edge in $B_R(a) \cup S^k_{R+k}(a)$).} 
    
    
	Let $P_R(a)$ denote the probability that $B_R(a)$ is a bubble of positive curvature. Then, by the above construction, $P_R(a)\ge \ep$ proving the first assertion.

	Let $\Omega_0 \subset \Omega$ be the collection of configurations for which 
	$B_R(a)$ is a bubble of positive curvature. We shall prove that for all $\omega \in \Omega_0$, and all $\xi \in \pG$,   there exists $x \in B_R(a)$ such that $x$ is a 
	 dead-end of $T(\xi,\omega)$. This will prove the second assertion. 
	 
	 Let  $[a,\xi)_\omega $ denote the $\omega-$geodesic ray from $a$ to $\xi$.
Since $S_C^k(a)$ separates $a, \xi$, there exists $u \in [a,\xi)_\omega \cap S_C^k(a)$.
We claim that if $[a,\xi)_\omega \subset [b,\xi)_\omega \subset T(\xi,\omega)$, then 
$b \in B_R(a)$. Else, there exists  $v \in [b,a]_\omega \cap S_C^k(a)$, and 
$[u,v]_\omega$ passes through $a$. In particular,  $[u,v]_\omega$ starts and ends in 
$S_C^k(a)$ and contains $a$. This contradicts the fact that 
	$B_R(a)$ is a bubble of positive curvature. 
    {From the above argument it also follows that the backward subtree rooted at the centre of a bubble of positive curvature is finite, proving the final assertion of the proposition.} 
\end{proof}

{ Since the action of $G$ on the configuration space $\Omega$ is ergodic, and for a fixed $R$ the event that there exists a bubble of positive curvature of radius $R$ is invariant under the translation of a configuration in $\omega$ by an element of $G$, we get the following corollary.
\begin{cor}
    \label{c:bubble}
    Under the hypothesis of Proposition \ref{prop-bpc}, for all sufficiently large $R$, almost surely there exists $x\in \Gamma$ such that $B_R(x)$ is a bubble of positive curvature. It also follows that  there are dead-ends in $T(\xi,\omega)$ for $\P$ a.e. $\omega$. 
\end{cor}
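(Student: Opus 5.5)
The plan is to apply ergodicity of the $G$-action on $\omegap$ to the event $E_R$ that some ball of radius $R$ is a bubble of positive curvature. The dead-end statement then follows from the deterministic part of the proof of Proposition~\ref{prop-bpc}.

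\textbf{Step 1: measurability and invariance.} Fix $R$ large enough that Proposition~\ref{prop-bpc} applies. For each $x$, whether $B_R(x)$ is a bubble depends only on the $\omega$-geodesics between points of $S_R(x)$. Each such geodesic is a.s.\ determined by the configuration, and is a countable limit of events in finitely many edge weights. So the bubble event $A_x$ is measurable, and $E_R=\bigcup_{x\in G}A_x$ is a countable union of measurable events. Left translation by $g\in G$ sends balls to balls and $\omega$-geodesics to $(g\cdot\omega)$-geodesics. Hence $g\cdot A_x=A_{gx}$, so $E_R$ is $G$-invariant.

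\textbf{Step 2: ergodicity.} $\P=\rho^{\otimes E}$ is an i.i.d.\ product measure and $G$ is infinite, acting freely on vertices and with finitely many orbits on edges. So the shift action is mixing, hence ergodic; the standard argument approximates an invariant event by cylinder events and translates them far away. Therefore $\P(E_R)\in\{0,1\}$. Since $\P(E_R)\ge \P(A_a)\ge \ep>0$ by Proposition~\ref{prop-bpc}, we get $\P(E_R)=1$.

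\textbf{Step 3: dead-ends.} Fix $\xi$ and work on $E_R\cap\Omega_\xi$, which has full measure. Pick $x$ such that $B_R(x)$ is a bubble. The argument in the proof of Proposition~\ref{prop-bpc} is deterministic given the bubble. It shows that any $b$ with $[x,\xi)_\omega\subset[b,\xi)_\omega$ lies in $B_R(x)$, so the backward subtree rooted at $x$ is finite. A finite rooted subtree has a leaf $y$, and such a $y$ is a dead-end of $T(\xi,\omega)$. Hence dead-ends exist for $\P$-a.e.\ $\omega$.

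\textbf{Main obstacle.} The only delicate point is Step 2: justifying ergodicity of the $G$-shift on the edge-indexed product space and the measurability of $A_x$. Both are standard, via mixing of Bernoulli shifts over an infinite group, and I would cite them rather than reprove them.
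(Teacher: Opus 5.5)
Your proposal is correct and follows the paper's route. The paper likewise deduces the corollary from three ingredients: ergodicity of the $G$-action on $(\Omega,\P)$, $G$-invariance of the event that some $B_R(x)$ is a bubble of positive curvature, and the positive-probability bound of Proposition~\ref{prop-bpc}. As in your argument, the dead-ends come from the deterministic finiteness of the backward subtree rooted at the centre of a bubble. Your extra details only make explicit what the paper leaves implicit: measurability, mixing of the Bernoulli shift, and taking a maximal-depth vertex of that finite subtree as the dead-end.
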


Choosing $R=n, n+1, n+2, ...$ for $n$ sufficiently large, it follows immediately from
Corollary~\ref{c:bubble} that for almost every
$\omega \in \Omega$, and all $n \in \natls$ sufficiently large, there exists $x_n \in \Gamma$ such that $B_n(x_n)$ is a bubble of positive curvature.
In particular, we have the following:

\begin{cor}\label{cor-nonhyp}
    Under the hypothesis of Proposition \ref{prop-bpc}, 
    for almost every
$\omega \in \Omega$, $(\Gamma, d_\om)$ is not hyperbolic.
\end{cor}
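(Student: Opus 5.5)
The plan is to show that, almost surely, $(\Gamma,d_\om)$ contains geodesic triangles that are arbitrarily fat. This is done directly inside the bubbles of positive curvature supplied by Corollary~\ref{c:bubble}. By that corollary (applied with $R=n,n+1,\dots$ and intersecting countably many full-measure events), for a.e.\ $\om$ and every sufficiently large $n$ there is $x_n$ such that $B_n(x_n)$ is a bubble of positive curvature.

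Fix such an $n$ and write $B=B_n(x_n)$. Since $G$ is one-ended and infinite, $S_n(x_n)$ contains points $u,v,w$ that are pairwise at word distance at least about $n$; for instance, take points on three word geodesics through $x_n$ heading in different directions. By Definition~\ref{def-bubble}, the $\om$-geodesics $[u,v]_\om$, $[v,w]_\om$ and $[w,u]_\om$ all lie in the shell $S_n^k(x_n)$, which stays at word distance at least $n$ from $x_n$.

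The key step is to compare $d_\om$ with the word metric. Under the bubble event built in Proposition~\ref{prop-bpc}, shell edges are tiny and interior edges have weight $>c$, so the $d_\om$-diameter of the shell is less than $c$. A path from $x_n$ to the shell, however, must cross roughly $n$ edges of $B$, so the $d_\om$-distance from $x_n$ to the shell is at least about $cn$.

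Now add $x_n$ as a fourth point. Consider the $\om$-geodesic triangle with vertices $u$, $v$ and $x_n$. Its sides $[x_n,u]_\om$ and $[x_n,v]_\om$ must travel through $B$ for $d_\om$-length about $cn$. The side $[u,v]_\om$ lies in the shell and has $d_\om$-length less than $c$. Let $p$ be a point on $[x_n,u]_\om$ at $d_\om$-distance about $cn/2$ from $x_n$. Any path from $p$ to the shell must traverse about $n/2$ interior edges, so $d_\om(p,[u,v]_\om)\gtrsim cn/2$. Thus the required thinness must come from $[x_n,v]_\om$, i.e.\ we need $d_\om(p,[x_n,v]_\om)$ to be large.

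This is the main obstacle. The paper's bubble event only controls edges in $B$ by a lower bound $>c$, so the geodesics from $x_n$ to $u$ and to $v$ could run close together for a long time inside $B$. To handle this, I would strengthen the event in Proposition~\ref{prop-bpc} as the support hypothesis allows. Concretely, I would require interior edge weights to lie in $[c,2c]$ (positive probability if $\rho$ charges such an interval). Then $d_\om$ restricted to $B$ is bi-Lipschitz to the word metric, so interior $\om$-geodesics are uniform quasigeodesics. Choosing $u,v$ with Gromov product $(u\cdot v)_{x_n}$ bounded, the Morse lemma keeps $[x_n,u]_\om$ and $[x_n,v]_\om$ diverging linearly. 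This gives $d_\om(p,[x_n,v]_\om)\gtrsim n$ as well.

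So the triangle $u,v,x_n$ fails to be $\delta'$-thin for $\delta'\approx n$ in $d_\om$. Since $n$ is arbitrary, $(\Gamma,d_\om)$ is not hyperbolic for a.e.\ $\om$.

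A simpler fallback is also available: a geodesic metric space of this kind in which short loops (the shell, of diameter $<c$) enclose points at distance $\sim cn$ already violates the Gromov four-point condition. Indeed, for $u,v,w$ in the shell and $x_n$ the center, $(u\cdot v)_{x_n}\approx cn$ while $d_\om(u,v)\approx 0$. I would check this directly with the four-point inequality applied to $u,v,w,x_n$ and a fifth base point chosen far outside $B$.
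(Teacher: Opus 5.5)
Your reduction to bubbles $B_n(x_n)$ of every large radius is exactly the paper's argument, and the paper treats non-hyperbolicity as immediate from it. One small point: Corollary~\ref{c:bubble} only yields bubbles in the sense of Definition~\ref{def-bubble}, which carries no weight information. To use a shell of $d_\omega$-diameter $<c$, you must rerun its ergodicity argument for the explicit weight event, and likewise for any strengthened event.

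\textbf{The gap: the Morse-lemma step.} The sides $[x_n,u]_\omega$ and $[x_n,v]_\omega$ are geodesics of $(\Gamma,d_\omega)$, not of the path metric $d^B_\omega$ that uses only edges of $B$. Note that $d_\omega$ restricted to $B$ is not bi-Lipschitz to the word metric: sphere points at word distance $2n$ are $d_\omega$-close.

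Write $m=\min_{s\in S_n(x_n)}d^B_\omega(x_n,s)$. The cheap shell gives $m\le d_\omega(x_n,u)<m+c$. So $[x_n,u]_\omega$ first meets $S_n(x_n)$ at some $s_u$ with $d^B_\omega(x_n,s_u)<m+c$, and then crosses the shell. The exit point $s_u$ is dictated by the environment, not by $u$. The Morse lemma only makes the interior part of $[x_n,u]_\omega$ track $[x_n,s_u]$, and $(u\cdot v)_{x_n}$ plays no role.

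Here is a concrete failure inside your own event. Take the positive-probability sub-event in which one word-geodesic radius $\gamma^*$ from $x_n$ has all weights in $[c,c+\tau]$, every other edge of $B$ has weight in $[2c-\tau,2c]$, and $\tau\le c/(n+2)$.
\begin{itemize}
\item Any path in $B$ from $x_n$ to $S_n(x_n)$ with two or more edges off $\gamma^*$ costs at least $cn+2c-2\tau\ge m+c$.
\item Hence every $[x_n,u]_\omega$, with $u$ on the sphere, begins with the first $n-1$ edges of $\gamma^*$.
\item Your triangle $x_n,u,v$ is therefore $3c$-thin for every choice of $u,v$.
\end{itemize}

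\textbf{The fallback does not help.} With $u,v,w$ in the shell and base point $x_n$, all three Gromov products equal $d_\omega(x_n,\text{shell})$ up to $c$. This is exactly what three nearby leaves of a tree, far from the root, give, so the four-point condition holds. The unspecified fifth point does not change this.

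\textbf{What is needed.} You need two near-geodesics from $x_n$ to the essentially collapsed shell that leave in genuinely different directions, i.e.\ a fat bigon. This requires nearly constant interior weights. A sketch:
\begin{itemize}
\item Demand weights in $[c,c(1+\eta)]$ on the edges of $B$, with $\eta$ small and $c(1+\eta)\le h$. Keep the shell weights as in Proposition~\ref{prop-bpc}, and get such bubbles a.s.\ for all large $n$ by ergodicity.
\item Let $b_1,b_2\in S_n(x_n)$ lie on a word geodesic through $x_n$. Let $y_i$ be the point of the radius $[x_n,b_i]$ at word distance $n/2$ from $x_n$.
\item Every path from $x_n$ to $b_i$ uses at least $n$ edges of $B$, so $d_\omega(x_n,b_i)\ge cn$. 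The radius has $d_\omega$-length at most $(1+\eta)cn$, and $d_\omega(x_n,y_i)\in[cn/2,(1+\eta)cn/2]$.
\item If $(\Gamma,d_\omega)$ were $\delta$-hyperbolic, each $y_i$ would lie within $\eta cn/2+O(\delta)$ of $[x_n,b_i]_\omega$. These two geodesics fellow-travel because $d_\omega(b_1,b_2)<c$, so $d_\omega(y_1,y_2)=O(\eta cn+\delta+c)$.
\item On the other hand, a path from $y_1$ to $y_2$ either stays in $B$, using at least $d(y_1,y_2)=n$ edges of weight $\ge c$, or reaches $S_n(x_n)$ from each $y_i$ through at least $n/2$ such edges. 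Hence $d_\omega(y_1,y_2)\ge cn$.
\end{itemize}
This is a contradiction once $\eta$ is small and $n$ is large.
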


Corollary~\ref{cor-nonhyp} gives a different proof of a version of \cite[Theorem A]{bjq25}
under somewhat more restrictive hypotheses.
}

\subsection{Law of backward subtrees from random centroid}\label{sec-law}
As before, $x, y, z \in \partial^3G$. By Theorem~\ref{thm-btree}, there exists a full measure subset $\Omega(x,y,z) \subset \Omega$ such that $x,y,z$ are
\emph{non-exceptional directions} for $\omega \in \Omega(x,y,z)$ in the sense that there are unique bi-infinite geodesics $(x,z)_\omega$ and 
$(y,z)_\omega$ that coalesce.  Let $c(x,y,z, \omega)$ denote  the  random centroid for $\omega \in \Omega(x,y,z)$ (note that 
 $c(x,y,z, \omega)$ is well-defined whenever $x,y,z$ are
 non-exceptional directions).
Assuming that the triple is ordered in such a way that
$z$ is the point from which backward trees are considered, we have a random
backward tree from $c(x,y,z, \omega)$ giving a law $L(x,y,z)$ for backward trees from
the random centroid.

We start with an  observation of Gromov that asserts that a non-elementary hyperbolic group
$G$ acts properly discontinuously and cocompactly on $\partial^3G$. 
The converse is a theorem of Bowditch \cite{bowditch-jams}. Note that 
 $L(x,y,z)=L(g.x,g.y,g.z)$ for any $g \in G$.
Let $K=(\partial^3G)/G$.
Hence, there is a compact family of laws $L(x,y,z)$ parametrized by the quotient $K$.

\begin{defn} Let $z \in \partial G$, $z' \neq z  \in \partial G$.
	We shall say that an  \emph{asymptotic law} $L(z',z)$ exists if
	for all $ x_n \neq  y_n  \in \partial G$, with $ x_n  \to z'; \,  y_n
	 \to z'$, the laws $L(x_n,y_n,z)$ converge.
\end{defn} 

Bader and Furman \cite{bader-furman} prove the existence of a measure $\nu_{BMS}$ on
$\partial^2 G$ (similar to the Bowen-Margulis-Sullivan measure on the space of geodesics in a rank one symmetric space) absolutely continuous with respect to $\nu \otimes \nu$ on $\partial^2 G$ such that there exists $C \geq 1$ such that the Radon Nikodym derivatives
 $\frac{dg^*\nu_{BMS}}{d\nu_{BMS}}$ satisfy 
 $$1/C \leq \frac{dg^*\nu_{BMS}}{d\nu_{BMS}} \leq C$$ for all $g \in G$. Note however
that $\frac{d\nu_{BMS}}{d\nu \otimes \nu}$ blows up at the diagonal $\pG \subset\pG \times \pG$. Further, $\partial^2 G$  equipped with $\nu_{BMS}$ is $\sigma-$finite but not finite.
	
\begin{theorem}\cite{bader-furman}\label{thm-doubleerg}
The action of $G$ on $\partial^2 G$ equipped with $\nu_{BMS}$ is ergodic.
\end{theorem}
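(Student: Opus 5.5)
The plan is a Hopf-type density argument for the measure class of $\nu\otimes\nu$. Since $\nu_{BMS}$ is absolutely continuous with respect to $\nu\otimes\nu$ on $\partial^2 G$, and its density is positive and finite off the diagonal, it suffices to prove the following. Every $G$-invariant measurable set $E\subset\partial^2 G$ with $(\nu\otimes\nu)(E)>0$ has full $\nu\otimes\nu$-measure.

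\textbf{Tools.} I would equip $\pG$ with a visual metric. By Coornaert \cite{coornert-pjm}, $\nu$ is then Ahlfors regular: shadows $S(g,R)$ with $d(1,g)=n$ have measure comparable to $\lambda^{-n}$. Consequently $\nu\otimes\nu$ is doubling on $\pG\times\pG$, and the Lebesgue differentiation theorem holds for products of shadows $S(g,R)\times S(h,R)$.

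I would also use quasi-conformality of $\nu$. There is a uniform constant $C$ such that
$$\frac{1}{C}\lambda^{\beta_\eta(1,g)}\le \frac{dg_*\nu}{d\nu}(\eta)\le C\lambda^{\beta_\eta(1,g)},$$
where $\beta_\eta$ is the Busemann cocycle. Finally, since $G$ acts cocompactly on $\Gamma$, every boundary point is conical.

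\textbf{Step 1 (density point).} Take a Lebesgue density point $(a,b)\in E$. Then there are shadows $U_n\ni a$ and $V_n\ni b$, with diameters tending to $0$, such that $(\nu\otimes\nu)(E\cap (U_n\times V_n))/(\nu\otimes\nu)(U_n\times V_n)\to 1$.

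\textbf{Step 2 (zooming).} Translate the geodesic $(a,b)$ so that it passes near $1$. The rectangles $U_n\times V_n$ are then essentially $S(x_n,R)\times S(y_n,R)$, with $x_n\to a$ and $y_n\to b$ along the geodesic. Choose $g_n\in G$ with $g_n$ within bounded distance of $x_n$ on $(b,a)$. Then $g_n^{-1}$ maps $U_n\times V_n$ onto a product $U_n'\times V_n'$ of subsets of $\pG$. Here $U_n'$ has diameter bounded below, and $V_n'$ covers the complement of a small neighbourhood of $g_n^{-1}b$.

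By quasi-conformality, the Radon--Nikodym derivative of $g_n^{-1}$ on $U_n\times V_n$ is constant up to a uniform multiplicative factor, since Busemann cocycles vary by $O(\delta)$ on a shadow. So the relative density of $g_n^{-1}E=E$ in $U_n'\times V_n'$ still tends to $1$.

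Apply the same zooming in the $b$-direction, alternating, or else use the $x_n$-zoom together with a symmetric $y_n$-zoom. This produces product sets $U'\times V'$ of definite size, whose $\nu\otimes\nu$-measure is bounded below, in which $E$ has density tending to $1$. Compactness of $\pG$ then lets me extract limits and conclude that $E$ has full measure in some nonempty open product $U\times V$ with $\bar U\cap\bar V=\emptyset$.

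\textbf{Step 3 (spreading).} Use the dynamics of loxodromic elements. The pairs $(g^+,g^-)$ of attracting and repelling fixed points of loxodromics are dense in $\partial^2 G$. Given any other open product $U_1\times V_1$, pick a loxodromic $g$ with $g^-\in V$ and $g^+\in U$. Also pick $h$ with $hU\supset$ a neighbourhood of a point of $U_1$ and $hV$ similarly for $V_1$; this is possible by minimality of the $G$-action on $\partial^2 G$, which holds for non-elementary $G$. Since $E$ is invariant and $h$ preserves the measure class, $E$ has full measure in $hU\times hV$. Covering $\partial^2 G$ by countably many such product sets gives full measure.

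\textbf{Main obstacle.} The delicate step is Step 2: controlling the distortion of $g_n^{-1}$ uniformly on the whole rectangle, and not just along the geodesic. The relative density must survive the blow-up even though $g_n^{-1}$ is only quasi-conformal and not conformal. My first attempt would be to prove a bounded-distortion lemma: for $\eta\in S(x_n,R)$ one has $|\beta_\eta(1,g_n)-d(1,g_n)|\le C(R,\delta)$, and similarly for the second factor using $\beta_\eta(1,g_n)\approx -d(1,g_n)$ plus a term bounded on $V_n$. This gives uniform comparability of Radon--Nikodym derivatives on $U_n\times V_n$. The Lebesgue-density ratio is then preserved up to a factor tending to $1$ in the complement-measure sense, which is all the argument needs.
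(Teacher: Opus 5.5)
Your Step 2 cannot be carried out, and the obstruction is structural, not technical. With $U_n=S(x_n,R)$, $V_n=S(y_n,R)$ and $g_n$ near $x_n$, it is $U_n'=g_n^{-1}U_n$ that covers the complement of a neighbourhood of $g_n^{-1}b$ of measure $O(\lambda^{-R})$. By contrast, $V_n'=g_n^{-1}V_n$ lies in the shadow of a point at distance about $2n$ from $1$, so $\nu(V_n')\asymp\lambda^{-2n}$. A further zoom towards $b$ simply undoes the zoom towards $a$.

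No other choice of group elements helps. The measure $\nu_{BMS}$ is locally finite on $\partial^2G$ and gives positive mass to every product $U'\times V'$ of nonempty open sets with disjoint closures. It is also preserved by $G$ up to the factor $C$, so $\nu_{BMS}(g(U_n\times V_n))\le C\,\nu_{BMS}(U_n\times V_n)\to 0$, uniformly in $g\in G$. Hence images of shrinking rectangles never contain, or converge to, a product of definite size. Relative density tending to $1$ on sets of measure tending to $0$ says nothing about any fixed open product.

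On $\partial^2G$ the group acts like a measure-preserving flow, expanding one factor exactly as much as it contracts the other. The zooming argument that proves ergodicity on $(\pG,\nu)$ relies on the Radon--Nikodym derivatives of $\nu$ being unbounded, and that mechanism is absent here. What your zoom does give, via Fubini, is only that some fibres $E^\eta=\{\xi:(\xi,\eta)\in E\}$ have $\nu$-measure close to $1$. To finish you would need that almost every fibre is null or conull; that property is $G$-invariant in $\eta$, so ergodicity on $\pG$ would then conclude. But showing it is precisely the content of the theorem.

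The missing ingredient is genuinely dynamical, typically a Hopf argument. Pass to $G\backslash(\partial^2G\times\R)$ with the measure induced by $\nu_{BMS}\times dt$, which is finite by cocompactness. Forward and backward Birkhoff averages of the flow agree a.e.; the former depend only on the forward endpoint and the latter only on the backward one. So these averages are a.e.\ constant by Fubini for $\nu\otimes\nu$, which gives ergodicity of the flow and hence of $G$ on $\partial^2G$. For a hyperbolic group one needs a coarse or Mineyev-type flow, since asymptotic geodesics only fellow-travel up to $O(\delta)$. Alternatively, realise $\nu$, up to measure class, as the harmonic measure of a suitable random walk and use Kaimanovich's double ergodicity of Poisson boundaries. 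The paper itself gives no proof and simply quotes Bader--Furman.

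Your Step 3 also uses a false fact: $G$ does not act minimally on $\partial^2G$. The orbit of the fixed-point pair of a loxodromic element is closed and discrete. So $G\cdot(U\times V)$ is only an open dense invariant set, which need not be conull without already knowing ergodicity. That step could be repaired by applying topological transitivity to density products for both $E$ and its complement, but only once Step 2 has been replaced by a real argument.
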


\begin{prop}\label{pro-existlawimplieslaw}
Suppose that  an {asymptotic law} $L(z',z)$ exists for $\nu_{BMS}-$a.e.\ $(z',z) \in \partial^2 G$. Then $L(z',z)$ is constant a.e.
\end{prop}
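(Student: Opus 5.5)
The plan is to deduce constancy from ergodicity of the diagonal $G$-action on $(\partial^2 G,\nu_{BMS})$ (Theorem~\ref{thm-doubleerg}). We first check that the asymptotic law is $G$-equivariant in the appropriate sense. Then we check that $(z',z)\mapsto L(z',z)$ is a measurable map into a standard Borel space of laws. Once both hold, any $G$-invariant measurable map on an ergodic space is a.e.\ constant.

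\emph{Invariance.} We already know that $L(x,y,z)=L(gx,gy,gz)$ for all $g\in G$. Here laws of backward trees from the random centroid are regarded as laws of rooted trees up to the natural translation by $g$, i.e.\ as laws on the space of rooted subgraphs of $\Gamma$ modulo the $G$-action, or equivalently on rooted isomorphism classes of labelled trees. The invariance follows from translation invariance of $\P=\rho^{\otimes E}$. Suppose $(z',z)$ lies in the full-measure set where the asymptotic law exists, and $x_n\neq y_n$ with $x_n,y_n\to z'$. Then $gx_n,gy_n\to gz'$ by continuity of the $G$-action on $\pG$, and
$$L(gx_n,gy_n,gz)=L(x_n,y_n,z)\to L(z',z).$$
Every pair of sequences converging to $gz'$ arises as $g$ applied to a pair converging to $z'$. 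Hence the asymptotic law exists at $(gz',gz)$ and $L(gz',gz)=L(z',z)$. We may therefore replace the full-measure set by its $G$-saturation, since $G$ is countable and $\nu_{BMS}$ is quasi-invariant with bounded Radon--Nikodym derivatives. This gives a $G$-invariant conull set on which $L$ is defined and $G$-invariant.

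\emph{Measurability.} The space of probability measures on the compact metrizable space of rooted locally finite trees of bounded degree, with the local topology, is a compact metrizable (hence standard Borel) space under weak convergence. The map $(x,y,z)\mapsto L(x,y,z)$ on $\partial^3G$ should be measurable. To see this, fix finite patterns $P$ and consider $\P(\text{the tree rooted at } c(x,y,z,\omega) \text{ agrees with } P \text{ up to radius } r)$. For a.e.\ $\omega$, $c(x,y,z,\omega)$ is a limit of coalescence points of geodesics $[o_1,z)_\omega,[o_2,z)_\omega$ with $o_i$ ranging over countable approximating sequences, so these probabilities are limits of measurable functions. The asymptotic law $L(z',z)$ is then a pointwise limit along a fixed countable choice of sequences, say $x_n,y_n$ chosen measurably in $(z',z)$ by a Borel selection on the compact space $\pG$, of measurable maps. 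It is therefore measurable on the conull set where it exists.

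\emph{Conclusion and main obstacle.} Compose $L$ with a countable separating family of bounded continuous functionals $\Phi_j$ on the space of laws. Each $\Phi_j\circ L$ is a real-valued, $G$-invariant, measurable function on $(\partial^2G,\nu_{BMS})$. By Theorem~\ref{thm-doubleerg} each is a.e.\ constant; note that ergodicity applies even though $\nu_{BMS}$ is only $\sigma$-finite. Intersecting countably many conull sets shows that $L$ is a.e.\ constant. The step I expect to require the most care is measurability: making precise the target space of laws modulo translation, and verifying that the random centroid and the backward tree depend measurably on the boundary triple. I would handle this through the countable-approximation description of coalescence points given after Theorem~\ref{thm-btree}, together with the definition of the asymptotic law as a limit.
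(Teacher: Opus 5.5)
Your proposal is correct and takes the same approach as the paper, whose proof consists only of invoking the ergodicity of the $G$-action on $(\partial^2G,\nu_{BMS})$ from Theorem~\ref{thm-doubleerg}. Your checks that the asymptotic law is $G$-invariant and measurable, together with the reduction to countably many real-valued invariant functions, fill in what the paper leaves implicit (cf.\ Remark~\ref{rmk-existence}); measurability of $L$ on $\partial^3G$ can also be obtained from the continuity proved later in Theorem~\ref{thm-contlaw}.
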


\begin{proof}
By Theorem~\ref{thm-doubleerg}, the  $L(z',z)$ must be constant a.e.\ on $\partial^2 G$ equipped with $\nu_{BMS}$.
\end{proof}

\begin{rmk}\label{rmk-existence}
The one line proof of Proposition~\ref{pro-existlawimplieslaw} illustrates the principle that the real issue here is to prove the \emph{existence} of a measurable observable on $\partial^2 G$ equipped with $\nu_{BMS}$. Once such an existence theorem is established, Theorem~\ref{thm-doubleerg} immediately implies that such an observable must be constant a.e.\ on $\partial^2 G$ equipped with $\nu_{BMS}$.
\end{rmk}

We now proceed  to prove the \emph{topological fact}
that the law $L(x,y,z)$ is a continuous function of $(x,y,z) \in \partial^3G /G$:

\begin{theorem}\label{thm-contlaw}
$L(x,y,z)$ is a continuous function of $(x,y,z) \in \partial^3G /G$.
\end{theorem}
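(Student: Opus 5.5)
We have to show that if $(x_n,y_n,z_n)\to(x,y,z)$ in $\partial^3G$, then $L(x_n,y_n,z_n)\to L(x,y,z)$ in the topology of local weak convergence of rooted trees. Since $L$ is $G$-invariant, this gives continuity on $\partial^3G/G$. Because $G$ acts properly and cocompactly on $\partial^3G$, we may replace each triple by a $G$-translate so that all centroids $c_{x_ny_nz_n}$ lie in a fixed finite ball $B_{D_1}(1)$. Local convergence of laws means the following: for each $r$, the law of the $r$-ball of the backward tree rooted at the random centroid converges. Because $\Gamma$ is locally finite, this amounts to showing that for each $r$ and each finite rooted configuration $\tau$, $\P[\text{$r$-neighbourhood equals }\tau]$ converges. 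It therefore suffices to exhibit, for each $\epsilon>0$ and $r$, a single \emph{local} random object that coincides with the relevant data for both $(x,y,z)$ and $(x_n,y_n,z_n)$ (for $n$ large) with probability at least $1-\epsilon$.

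First, by Theorem~\ref{thm-centroidexpdecay}, choose $N$ with $\P[X_{xyz}\ge N]<\epsilon$, and similarly for every triple, since the constants are uniform. Then both random centroids lie in $B_{N+D_1}(1)$ with high probability.

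Second, we localize, following the local-to-global argument in the proof of Proposition~\ref{prop-4corr}. Fix $M\gg N+r$. Take points $x^M,y^M,z^M$ at distance $M$ from $1$ on geodesic rays $[1,x),[1,y),[1,z)$. Place hyperplanes through them perpendicular to these rays, and at half-distance as in Figure~\ref{f:l2g}. Let $\gamma_{x^Mz^M},\gamma_{y^Mz^M}$ be the restricted geodesics in the region bounded by the hyperplanes. By Theorem~\ref{bmthmcoalesceeff} (both parts) and Proposition~\ref{lem-effbt}, with probability at least $1-Ce^{-cM}$ the following hold: $(x,z)_\omega$ and $(y,z)_\omega$ agree with these restricted geodesics inside $B_{M/2}(1)$, and the random centroid equals the coalescence point of the restricted geodesics. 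Moreover, every vertex $w$ of $T(z,\omega)$ within tree-distance $r$ of the centroid has $[w,z)_\omega$ determined inside $B_{M/2}(1)$ by edge weights in $B_{M}(1)$. To see this, apply Proposition~\ref{lem-effbt} together with Theorem~\ref{bmthmcoalesceeff}(2) to rule out geodesics from $B_{N+r}(1)$ toward $z$ that wander far and return; a union bound over the polynomially many points is absorbed by the exponential decay. Hence the $r$-neighbourhood of the rooted backward tree equals, off an event of probability at most $\epsilon$, a function $\Phi_M(\omega|_{B_{M}(1)};\,x^M,y^M,z^M)$ of the weights in a finite ball and of the hyperplane data.

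Third, we compare triples. If $(x_n,y_n,z_n)\to(x,y,z)$, then for $n$ large the Gromov products give that the rays $[1,x_n),[1,x)$ fellow-travel up to distance $\gg M$, and likewise for $y$ and $z$. So we may choose the same $x^M,y^M,z^M$ and the same hyperplanes, and the localization holds with the same $\Phi_M$ and with uniform error bounds. Thus the two laws of $r$-neighbourhoods differ in total variation by at most $2\epsilon$. Letting $\epsilon\to0$ gives the result.

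The main obstacle is the second step, specifically controlling the backward branches near the centroid, not just the two bi-infinite geodesics. We need to know that whether $w$ feeds into the tree through a given edge is decided locally, uniformly in the triple. The intended route is to show that, with high probability, every $[w,z)_\omega$ for $w\in B_{N+r}(1)$ coalesces with $\gamma_{x^Mz^M}$ inside the region before $H^{z^M}$. This should follow from Theorem~\ref{bmthmcoalesceeff}(1) applied with a single common direction $z$, using a union bound and hyperplanes placed at the appropriate scale.
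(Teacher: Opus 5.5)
Your proof is correct in outline, but it takes a heavier route than the paper.

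\textbf{The paper's argument.} The paper uses the identity coupling and compares the two true objects directly, one coordinate at a time. Normalize $c_{xyz}=1$ and let $x_n\to x$. The deterministic centroid $C_n$ of $(x_n,x,1)$, coarsely equal to that of $(x_n,x,z)$, escapes to $x$. By Theorem~\ref{thm-centroidexpdecay}, $(x_n,z)_\omega$ and $(x,z)_\omega$ coalesce within $N$ of $C_n$ with probability at least $1-e^{-cN}$. Hence they agree near $1$, and the random centroids $c(x_n,y,z,\omega)$ and $c(x,y,z,\omega)$ coincide. Since both roots sit in the same tree $T(z,\omega)$, the rooted backward trees coincide exactly.

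\textbf{Your route.} You pass both triples through a third object, the restricted-geodesic proxy $\Phi_M$. This obliges you to prove a localization lemma that is uniform over all triples with endpoints beyond $x^M,y^M,z^M$. It also requires control of every branch $[w,z)_\omega$ near the root, which is the step you flag as the main obstacle.

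\textbf{The proxy can be bypassed.} Even for the third coordinate no proxy is needed. For $w$ in a fixed ball, $[w,z)_\omega$ and $[w,z_n)_\omega$ merge with $(x,z)_\omega$ and $(x,z_n)_\omega$ respectively inside a bounded region with high probability (Theorem~\ref{bmthmcoalesceeff}). These two bi-infinite geodesics agree on a large ball around $1$ with high probability as $n\to\infty$. Both rays therefore pass through a common point $s$ on that common stretch, and uniqueness of $[w,s]_\omega$ makes them agree near $w$.

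\textbf{What your framework buys.} First, it gives joint rather than separate continuity; the paper's coordinatewise argument does extend, but this is not stated. Second, it supplies an explicit local topology on laws, which is genuinely needed for the third coordinate. When $z_n\to z$ the trees $T(z_n,\omega)$ and $T(z,\omega)$ differ, so only agreement on finite balls is possible. The paper's ``coincide on the nose'' covers only perturbations of $x$ and $y$.

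\textbf{Two minor slips.}
\begin{enumerate}
\item $|B_{N+r}(1)|$ is exponential, not polynomial, in $N+r$. This is harmless, since $N,r$ are fixed while $M\to\infty$.
\item You should translate by one fixed group element (or none), not triple by triple. Separate translations could destroy the convergence $(x_n,y_n,z_n)\to(x,y,z)$ that you use in your third step. For a convergent sequence the centroids are already in a bounded set, so no translation is actually required.
\end{enumerate}
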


\begin{proof} It suffices to show sequential continuity for $\{(x_n,y_n,z_n)\}\to (x,y,z)$. Further, for any such sequence, we may 
	set $\Omega' = \Omega(x,y,z) \cap \bigcap_n  \Omega (x_n,y_n,z_n)$ so that all the (countably many) points $\{x_n,y_n,z_n\}, x,y,z$ are
	non-exceptional for $\omega \in \Omega'$.
 First note that  the full measure set $\Omega'$  may, without loss of generality, be assumed to be $G-$invariant (by taking an intersection with its countably many $G-$translates). Further, by $G-$invariance, 
$L(x,y,z)=L(g.x,g.y,g.z)$. Hence, it suffices to show that 
$L(x,y,z)$ is a continuous function of $(x,y,z) \in \partial^3G$.
It further suffices to prove this continuity coordinate-wise. We thus keep $y, z$ fixed and consider a sequence $x_n \to x$. We want to show that $L(x_n,y,z) \to L(x,y,z)$. By group-invariance we can further assume that the (geometric/deterministic) centroid $c_{xyz}=1$.

By a standard hyperbolic geometry argument, the (geometric/deterministic) centroids $c_{x_nxy}, c_{x_nxz}, c_{x_nx1}$ all lie within a uniformly bounded distance (in terms of $\delta$) of each other. We thus set
$C_n:= c_{x_nx1}$ and observe that $C_n \to x$ as $n \to \infty$. Let $X_n$ denote the random variable equal to the distance of the random
centroid of $1, x_n, x$ from $C_n$.
By Theorem~\ref{thm-centroidexpdecay}, $\P[X_n \geq N] \leq e^{-cN}$ for some uniform $c>0$. In particular, $[x_n,z]_\omega$ and  $[x,z]_\omega$
coalesce within a distance $N$  of $C_n$ with probability at least 
$1-e^{-cN}$. Hence, there exists a set $\Omega_N$ of measure at least $1-e^{-cN}$, such that the backward trees from $c(x_n,y,z,\omega)$ and $c(x,y,z,\omega)$
coincide on the nose for $\omega \in \Omega_N$. Thus, $L(x_n,y,z) \to L(x,y,z)$. 
A similar argument shows that  $L(x,y,z)$ is continuous in the second and third variables too.
\end{proof}

\section{Related Works and Future Directions}
We finish this article with a brief discussion of results related to the ones described in the previous sections and some potential questions of future research.

\subsection{Extra symmetries}\label{sec-ccmt} 
Given Theorem~\ref{thm-contlaw}, it is tempting to upgrade the conclusion to an actual law by demanding an additional group of symmetries. However, in the light of the following theorem from \cite{ccmt}, such a naive approach will not work. We refer the reader to \cite{ccmt} for background notions, particularly for the notion of a hyperbolic locally compact group $L$ and an amenable hyperbolic group. Briefly, a locally compact group $L$ is hyperbolic if $L$ acts by isometries on a hyperbolic space with compact stabilizers.

\begin{theorem}\label{thm-ccmt}
	Let $L$ be a non-amenable hyperbolic locally compact group. If $L$ contains
	a cocompact amenable closed subgroup, then there exists a unique maximal compact normal
	subgroup $W \subset L$, such that 
	\begin{enumerate}
		\item Either $L/W \subset Isom (X)$ where $X$ is a rank one symmetric space of noncompact type, and $L/W$ acts transitively on $X$.
		\item Or $L/W \subset Isom (\TT)$, where $\TT$ is a locally finite
		non-elementary tree. Further, 
		\begin{itemize}
			\item $L/W$ acts faithfully and properly discontinuously by simplicial automorphisms without inversions on $\TT$ with exactly two orbits of vertices,
			\item The $L$ action on the boundary $\partial \TT$ is 2-transitive, i.e.\ it is transitive on $\partial^2 T$. 
		\end{itemize}
	\end{enumerate}
\end{theorem}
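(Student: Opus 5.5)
The statement just made is Theorem~\ref{thm-ccmt}, a structure theorem for hyperbolic locally compact groups quoted from \cite{ccmt}, so my plan follows the standard route in that paper rather than anything specific to FPP.

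\emph{Step 1: reduce to a focal action.} First I replace $L$ by $L/W$, where $W$ is the maximal compact normal subgroup; it exists because $L$ acts properly and cocompactly on a hyperbolic space. Let $H\subset L$ be the cocompact amenable closed subgroup. Amenability means $H$ has an invariant probability measure on $\partial X$, and the classification of hyperbolic actions applies: $H$ either fixes a boundary point or preserves a pair of points. Since $H$ is cocompact in the non-amenable group $L$, the action of $H$ on $X$ is cobounded, and the pair case would make $L$ elementary. So $H$ fixes a unique $\xi\in\partial X$, and acts focally (horocyclically) there. Cocompactness then makes $L$ act transitively on $\partial X$: writing $\partial X \cong L/\mathrm{Stab}(\xi)$, and using that $\mathrm{Stab}(\xi)\supset H$ is cocompact, the orbit $L\xi$ is compact and dense, hence equal to $\partial X$. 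Combining with north--south dynamics of the focal hyperbolic elements in $H$, which fix $\xi$, I get that $L$ is $2$-transitive on $\partial X$.

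\emph{Step 2: split on the topology of $\partial X$.} Here I use the structure theory of amenable hyperbolic locally compact groups, namely $H\cong N\rtimes \mathbb{Z}$ or $N\rtimes\mathbb{R}$, with $N$ compacting contracted by the focal element. If $\partial X$ is connected, I would show that $N$ is a connected nilpotent Lie group modulo a compact subgroup. Then $L$ is a Lie group modulo compact with a $2$-transitive boundary action, and Tits' classification of boundary-transitive groups gives case (1), the rank one symmetric spaces. If $\partial X$ is totally disconnected, then $N$ is totally disconnected and $L$ is quasi-isometric to a tree. Running Bass--Serre theory on the coset space of a compact open subgroup yields the bi-regular tree $\TT$. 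There are exactly two vertex orbits, because the action is without inversions, and the $2$-transitivity on $\partial\TT$ is inherited from Step 1.

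\emph{Main obstacle.} The hard step is the dichotomy in Step 2: ruling out mixed behaviour, where $\partial X$ is neither connected nor totally disconnected. I would first try to show that the identity component $L^\circ$ is either cocompact or compact, using van Dantzig and Gleason--Yamabe. If $L^\circ$ is non-compact, it is cocompact and we are in the Lie case. If $L^\circ$ is compact, it lies in $W$, so $L/W$ is totally disconnected and we are in the tree case.
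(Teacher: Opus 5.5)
The paper gives no proof of this statement. It is quoted from \cite{ccmt} and used only as a black box in Section~\ref{sec-ccmt}, so your sketch has to stand on its own as a reconstruction of the Caprace--Cornulier--Monod--Tessera argument. The architecture is right: the cocompact amenable subgroup $H$ is focal with fixed point $\xi$, $L$ is transitive on $\partial L$, and one then splits into a Lie case and a tree case. But Step 2 has a genuine gap, and it is not where you locate it.

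Focal groups of \emph{mixed type} have \emph{connected} boundary. Take $N=\mathbb{R}\times\mathbb{Q}_p$ and let $t$ act by $(x,y)\mapsto(\lambda x,py)$ with $0<\lambda<1$. Then $t$ contracts $N$, $H=N\rtimes\langle t\rangle$ is focal hyperbolic, $N$ acts simply transitively on $\partial H\setminus\{\xi\}$, and $\partial H$ is the one-point compactification of $\mathbb{R}\times\mathbb{Q}_p$. This space is connected, since every line $\mathbb{R}\times\{y\}$ closes up to a circle through $\xi$. It is not locally connected, and $N$ is not a Lie group modulo a compact subgroup. So your claim ``if $\partial X$ is connected, then $N$ is a connected nilpotent Lie group modulo a compact subgroup'' is false for focal groups. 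The bad case lies inside your connected branch, not in a third branch where $\partial X$ is ``neither connected nor totally disconnected''.

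Your proposed remedy cannot work on its own either. Van Dantzig and Gleason--Yamabe apply just as well to this $H$, for which $H^\circ\cong\mathbb{R}$ is neither compact nor cocompact. Since these groups are amenable, any argument excluding them must use the non-amenability of $L$, concretely the transitivity of $L$ on $\partial L$ from Step 1. Your dichotomy does not use it.

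The missing idea is that $\partial L=\partial H$ is \emph{homogeneous}, because $L$ acts transitively on it. For mixed type the point $\xi$ is topologically distinguished. In the example it is the only point with a basis of connected neighbourhoods, since a connected subset of $\mathbb{R}\times\mathbb{Q}_p$ lies in a single line; the same happens for mixed type in general. Hence $H$ has one of two types:
\begin{itemize}
\item Totally disconnected type: $L^\circ$ has singleton orbits on the totally disconnected set $\partial L$, so it lies in the compact kernel $W$ of the boundary action.
\item Connected type: $H^\circ$, and hence $L^\circ$, is cocompact, and your Lie-group argument applies.
\end{itemize}
An equivalent route, once $2$-transitivity is known, goes as follows. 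The partition of $\partial L$ into $L^\circ$-orbits is $L$-invariant, hence trivial or total. So either $L^\circ\subset W$, or $L^\circ$ is transitive on $\partial L$. In the latter case $\partial L$ is a compact homogeneous space of the Lie group $L^\circ/K$, where $K$ is the maximal compact normal subgroup of $L^\circ$. Such a space is locally connected, which rules out both the mixed and the totally disconnected types.

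Two smaller points also need fixing.
\begin{itemize}
\item North--south dynamics of a loxodromic $\alpha\in H$ only show that every $L_\xi$-orbit in $\partial L\setminus\{\xi\}$ accumulates at the repelling fixed point of $\alpha$. For $2$-transitivity you need $H$ (indeed its horocyclic subgroup $N$) to be transitive on $\partial L\setminus\{\xi\}$. This follows from cocompactness: approximate a given boundary point by $N$-translates of that fixed point, and extract a limit from a compact set.
\item ``Exactly two vertex orbits because the action is without inversions'' is not a valid inference. $\mathbb{Z}$ translating a line acts edge-transitively, without inversions, with one vertex orbit. What rules out a single orbit is that $2$-transitivity on $\partial\mathcal{T}$ forces vertex stabilisers to be transitive on neighbours, after subdividing edges to remove inversions if necessary.
\end{itemize}
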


\begin{rmk}\label{rmk-tdlc}
	There is an easy way to transition between discrete groups and totally disconnected {locally} compact (tdlc) groups $L$ as follows. Let $L$ be tdlc, $G$ a finitely generated discrete group (e.g.\ $G$ could be free) and let
	$G<L$ be a dense embedding. Let $K<L$ be a compact open and $H=G \cap K$. Then
	the  Schlichting completion $G\/ \! \/ H =L$ and $G$ commensurates $H$. Conversely if $H<G$ is a commensurated subgroup then  $G\/ \! \/ H =L$ is a tdlc group and the closure $\bbar H$ in $L$ is a compact open. We refer the reader to \cite{shalom-willis,bfmv} for details on the  Schlichting completion and commensurated subgroups.  
\end{rmk}

Now, suppose that the given hyperbolic group $\Gamma$ acts properly cocompactly on a (necessarily hyperbolic) graph $\GG$ so that the isometry group $Isom(\GG)$ is indiscrete. Then $L=Isom(\GG)$  is necessarily a tdlc group 
and $G < L$ is a cocompact lattice.
Let $v $ be a vertex of $\GG$ and $K\subset L$ be the stabilizer of $v$. Then the point stabilizer $K$ is  a compact tdlc group providing an extra family of isometries of $\GG$. Certainly, $K$ acts on $\partial G$, and hence on $\partial^3 G$. If $L$ acts with dense orbits on 
$\partial^3 G$, then continuity of the law established in Theorem~\ref{thm-contlaw} implies that we have a genuine (constant) law.

We observe now that the hypothesis that $L$ acts with dense orbits on 
$\partial^3 G$ is highly restrictive. Note first that $K-$orbits on $\pG$ are necessarily compact. Let $U,V, W \subset \pG$ be open subsets such that $U \times V \times W$ is contained in a fundamental domain for the action of $G$ on $\partial^3 G$. By shrinking $U,V, W$ slightly, we may further demand that
$o \in \GG$ is the centroid of any $(u,v,w) \in U \times V \times W$. Since vertices of $\GG$ are in one-to-one correspondence with the cosets of $K$ in $L$ and we have assumed that  $L$ acts with dense orbits on 
$\partial^3 G$, it follows that for any  $(u,v,w) \in U \times V \times W$,
the $K-$orbit $K.(u,v,w)$ contains a subset that is dense in $U \times V \times W$. Next, since $K$ is compact, $K-$orbits are compact, hence closed.
Therefore $K.(u,v,w)\cap  U \times V \times W$ is both dense and closed in
$U \times V \times W$. This forces $ U \times V \times W \subset K.(u,v,w)$.
In particular, $U \subset K.u \subset \pG$. Next, for any $u' \in K.u$, there exists a neighborhood $U$ of $u'$ and two other disjoint subsets $V, W \subset \pG$ (both disjoint from $U$) such that $U \times V \times W$ is contained in a fundamental domain for the action of $G$ on $\partial^3 G$. This forces $K.u$ to be open in
$ \pG$. Thus, $K.u$ is both open and closed in $\pG$. 

Next, assume further that $G$ is one-ended so that $\pG$ is connected. It follows that  $K.u = \pG$. Therefore, $L.u =\pG$, i.e. $L$ acts transitively on $\pG$. The stabilizer $L_\xi$ of $\xi \in \pG$ is a closed subgroup; hence an amenable hyperbolic group in the sense of \cite{ccmt}. Further, $L/L_\xi = \pG$ is compact. Since $L$ is a tdlc, it follows from  Theorem~\ref{thm-ccmt}
that $L$ is naturally a closed subgroup of an isometry group of a tree. In particular, $G$ cannot be one-ended, a contradiction. This leads us to the following questions:

\begin{qn}\label{qn-law} $ $
	\begin{enumerate}
	\item Discrete case: If $G$ is one-ended and the probability distribution $\rho$ on edges is continuous, can the {subtree of the backward random tree
		 $T(\xi,\omega)$ starting at $x\in G$  have a law independent of $x$ ?}
	\item Continuous case: To address Case 1 of Theorem~\ref{thm-ccmt}, develop a theory of radially symmetric $L-$invariant continuous FPP on \emph{rank one symmetric spaces}. It should follow from the above discussion that for a robust enough theory, the 
	backward random tree has a law.
		\end{enumerate}
\end{qn}

We expect that the answer to {Question~\ref{qn-law}(1) is `No' (unlike 
what we expect for Question~\ref{qn-law}(2))}, as the following toy example suggests. Let $G=\Z$ denote the integers, and $S=\{\pm 1, \pm 2\}$ be a generating set. Then the Cayley graph $\Gamma$ of $(G,S)$ consists
of $\Z$ with edges between $(i, i+1)$ and $(i, i+2)$ for all $i$. Let {$\xi=\infty$}. Let $\rho$ denote the probability distribution given  by a Dirac mass at 1. Then the backward tree $T(\xi,\omega)$ from any positive even integer $2n$ consists of two geodesic rays in $\Gamma$ with vertex sets given by:
\begin{enumerate}
	\item $\{2n,2n+2, 2n+4,2n+6,\cdots\}$,
	\item $\{2n,2n+1, 2n+3,2n+5,\cdots\}$.
\end{enumerate}

On the other hand, the backward tree $T(\xi,\omega)$ from any positive odd integer $2n+1$ consists of a unique geodesic ray in $\Gamma$ with vertex set given by $\{2n+1, 2n+3,2n+5,\cdots\}$. {Since all positive integers are contained in the backward tree from 0, there cannot exist a law
	independent of the root.}

\bigskip
\subsection{Future Directions}
~~

\bigskip

\noindent {\bf General geometries:} One natural and interesting direction is to investigate how much of the theory developed for the FPP on hyperbolic groups can be extended to more general settings. As was remarked in \cite{BM24}, many results about geodesics, bi-geodesics and their coalescence can be easily extended to general hyperbolic graphs under mild conditions, whereas the law of large numbers result in Theorem \ref{thm-vexists} crucially depends on the group structure. It would be interesting to see, if these results can also be proved in a continuum setting, such as FPP models based on a Poisson process defined on the hyperbolic plane. It would also be interesting to explore FPP models on negatively curved spaces without using the full power of hyperbolicity. It was proved in \cite{benjamini-tessera} that bigeodesics exist in FPP on hyperbolic graphs with a Morse bi-infinite quasi-geodesic. This result has recently been extended to the case of \emph{sublinearly Morse} quasi-geodesics in \cite{JQ25}. \\

\noindent {\bf CLT and fluctuations:}
Recall that in Theorem \ref{thm-vexists} we showed that the passage times grow linearly with a fixed velocity in almost every direction, whereas in Theorem \ref{t:linvar} we showed that the variance of the passage time between two points grows linearly in the distance between them. Given this, it is natural to conjecture (and in fact, it was conjectured in \cite{BM19}) that the passage times, centered and scaled by the mean and standard deviation, respectively, converge weakly to a Gaussian distribution. A central limit theorem answering this question has recently been proved by Chawla and Gorski \cite{CG25}. It is of course a very interesting question to try and understand the fluctuation and scaling limit for FPP on more general groups and graphs, and decipher connections between the geometry of the ambient space and the fluctuations of the FPP metric. Recall that for FPP on $\Z^2$, the variance between two points at distance $n$ is expected to grow like $n^{2/3}$ (see \cite{ADH}), and to prove this remains one of the most important open questions in the area. \\

\noindent {\bf Exceptional directions and N3G:}
Finally, we discuss questions regarding exceptional directions for geodesic trees. Recall from Theorem \ref{thm-omni-BM24} that if $G$ is not virtually free then exceptional directions exist both for the forward geodesic trees (the tree $F(v,\omega)$ consisting of the semi-infinite FPP geodesics starting from a fixed point $v$), as well as the backward geodesic trees (the trees $T(\xi,\omega)$ consisting of the coalescing geodesic rays in a fixed boundary direction $\xi$). Consider the set of exceptional directions in the forward geodesic tree $F(v,\omega)$ for a given realization $\omega$ of the FPP metric. 

\begin{qn}
Are these exceptional directions independent of the base-point $v$?
\end{qn} 

This question is related to the behavior of exceptional directions with respect to the $G-$action. A result showing that exceptional directions are independent of the base point was proved in \cite{jrs} in the context of the exactly solvable model of planar exponential last passage percolation. Another interesting direction of research is to understand the multiplicities of exceptional directions. Recall from Theorem \ref{thm-omni-BM24}, (4) and (6), that the maximum number of disjoint geodesics in an exceptional direction is at least $\dim_t \pG + 1$  whereas we only have a non-explicit absolutely constant upper bound depending on the $G$ and the passage time distribution. It is interesting to understand whether the lower bound is tight, i.e.\

\begin{qn}
Is it true that almost surely there are no exceptional directions with 
multiplicity greater than $\dim_t \pG + 1$?
\end{qn}

Following a general conjecture from \cite{coupier2023thicktraceinfinityhyperbolic} (see also \cite{coupier11,jrs} for a proof in the context of planar exponential LPP) it is expected that the lower bound is indeed tight for planar hyperbolic FPP (the case covered in Theorem \ref{thm-omni-BM24}, (3)). Thus one expects that almost surely there are no exceptional directions with multiplicity $3$ or more. In an ongoing joint work with Chawla, we are studying the no three geodesics (N3G) phenomenon in planar hyperbolic FPP, but the question remains out of reach in higher dimensions.

{
\section*{Acknowledgments}
The authors would like to thank the anonymous referee for carefully and diligently going through an earlier version of this article. The detailed comments in his/her report have been extremely helpful to the improvement of this paper.
}

\bibliography{fpphoro}
\bibliographystyle{alpha}

\end{document}